\newcommand{\E}{\mathbb{E}}
\DeclareMathOperator*{\flopt}{fl}
\definecolor{cerulean}{rgb}{0.16, 0.32, 0.75}
\newtheorem{mycorollary}[theorem]{Corollary}
\newtheorem{mylemma}[theorem]{Lemma}
\newtheorem{myremark}[theorem]{Remark}
\newtheorem{mydefinition}[theorem]{Definition}
\newtheorem{myexample}[theorem]{Example}
\numberwithin{theorem}{section}
\numberwithin{equation}{section}
\numberwithin{algorithm}{section}
\date{\today}
\title{Precision-aware
Deterministic and Probabilistic Error Bounds for 
Floating Point Summation\thanks{This research was supported in part by grants DMS-1745654 and DMS-1760374 from the National Science Foundation, and grant DE-SC0022085 from the Department of Energy.}}
\titlerunning{Error bounds for Floating Point Summation}
\author{Eric Hallman \and Ilse C.F. Ipsen}
\institute{Eric Hallman\at
Department of Mathematics, North Carolina State University, Raleigh, NC 27695-8205, USA\\\email{erhallma@ncsu.edu } \\
\texttt{https://erhallma.math.ncsu.edu/} 
 \and
Ilse C. F. Ipsen\at
Department of Mathematics, North Carolina State University, Raleigh, NC 27695-8205, USA \\
\email{ipsen@ncsu.edu}\\
 \texttt{https://ipsen.math.ncsu.edu/}
}
\begin{document}
\maketitle

\begin{abstract}
We analyze the forward error in the floating point summation of real numbers,
for computations in low precision or
extreme-scale problem dimensions that push
the limits of the precision.
We present a systematic recurrence for a martingale on a computational tree, 
which leads to explicit and interpretable bounds
without asymptotic big-O terms. Two probability parameters
strengthen the precision-awareness of our bounds:
one parameter controls the first order terms
in the summation error, while the second one is designed for controlling
 higher order terms in low precision or extreme-scale problem dimensions.
Our systematic approach yields
new deterministic and probabilistic error bounds for three classes
of mono-precision algorithms: general summation,
shifted general summation, and compensated (sequential) summation.
Extension of our systematic error analysis to mixed-precision summation
algorithms that allow any number of precisions yields the first
probabilistic bounds for the mixed-precision \texttt{FABsum} algorithm. 
Numerical experiments illustrate that the probabilistic bounds are accurate, and that  
among the three classes of mono-precision algorithms, compensated summation is generally the most accurate.
As for mixed precision algorithms, our recommendation is to minimize the magnitude
of intermediate partial sums relative to the precision in which they are computed.
\end{abstract}



\keywords{Rounding error analysis\and floating-point arithmetic\and random variables\and martingales\and mixed precision\and computational tree
}

\subclass{65G99\and 60G42\and 60G50}

\section{Introduction}
We analyze algorithms for the
summation $s_n=x_1+\cdots +x_n$ in floating point arithmetic of $n$ real numbers $x_1, \ldots, x_n$, and
bound the forward
error $e_n=\widehat{s}_n-s_n$ in the computed sum $\widehat{s}_n$ in terms of the unit roundoff~$u$.

Our bounds are designed for low precision computations, or 
extreme-scale problem dimensions $n$ that push the limits of the arithmetic precision with $n> u^{-1}$.
The idea is to set up a systematic recurrence for a martingale on a computational tree (Section~\ref{s_gprob2}),
and strengthen its precision-awareness
with the help of two probability parameters:
one to control the first order terms in the summation error; and a 
second one to control higher order terms which become more influential
with increasing problem dimension or decreasing precision. 
This precision-aware martingale makes possible a
unified and clean derivation of explicit
bounds, without asymptotic big-O terms,
for a wide variety of mono- and mixed-precison summation
algorithms. 

As an illustration, we derive new deterministic and
probabilistic bounds for
three classes of mono-precision algorithms: general summation on a computational tree (Section~\ref{s_general}), 
shifted general summation (Section~\ref{s_center}), 
and compensated summation (Section~\ref{s_compensated}). 
For compensated summation, our bounds imply that third and higher
order terms do not matter, unless the 
problem dimension $n$ is so extreme
as to have already
exceeded the limitations of the precision
with $n\gg u^{-2}$.

We extend our bounds to mixed-precision summation, 
allowing any number of precisions, on a computational tree (Section~\ref{s_mixedPrec}).
The special case of two precisions leads to the
first probabilistic bounds for the mixed-precision \texttt{FABsum} 
algorithm \cite{blanchard2020class}.
Numerical experiments
(Section~\ref{s_numex}) illustrate that the bounds are informative, and that,
among the three classes of mono-precision algorithms, compensated summation is the most accurate
method.

\subsection{Contributions}
We present systematic derivations for 
interpretable preci\-sion-aware forward error bounds for
summation
in mono- and mixed-precision  on a computational tree.

\paragraph{Martingales on a computational tree}
We present a systematic recurrence for martingales on a computational tree
(Theorem~\ref{thm:model2Theorem}, Corollary~\ref{c_28}), 
which makes possible a
unified and clean derivation of explicit
bounds, without asymptotic big-O terms,
for a wide variety of summation algorithms.

Our analysis of summation serves as a model problem  for 
systematic error analyses of higher level matrix computations
in mixed precision \cite{blanchard2020class}, or on
hardware with wider accumulators \cite{DemmelH03}.

\paragraph{Precision-aware bounds}
Our bounds are exact and hold to all orders. This is important when the problem dimension
exceeds the precision $n> u^{-1}$; or in low precision, where asymptotic terms $\mathcal{O}(u^2)$ in 
first-order bounds 
are too large to be ignored. Precision-awareness is strengthened
with two probability parameters:
one for controlling the first order terms in the summation error,
and a second one for controlling the $\mathcal{O}(u^2)$ terms.

\paragraph{General summation on a computational tree}
We extend the error bounds in \cite{higham2019new,ipsen2020probabilistic}
by customizing them to specific summation algorithms. Rather than 
depending on the number of inputs $n$, our bounds 
depend primarily on the height $h$ of the computational tree, which can be much smaller than $n$, particularly in parallel computations.

We derive a deterministic bound for the summation error $e_n$ that is proportional to $h\,u$ (Theorem~\ref{t_gdet})
and a probabilistic bound that is proportional to $\sqrt{h}\, u$.
The probabilistic bound treats the roundoffs as zero-mean random variables
that are mean-independent  
(Theorem~\ref{thm:model2Analysis}, Corollary~\ref{c_210}) 
and employs a novel
staggered martingale approach in the proof.

\paragraph{Shifted summation algorithms}
We extend the shifted \textit{sequential} summation 
in~\cite{blanchard2020class}
to shifted 
\textit{general} summation (Algorithm \ref{alg:shiftSum}). We derive
probabilistic bounds for mean-independent roundoffs (Theorem~\ref{thm:shifted}).

\paragraph{Compensated summation}
We derive a recursive expression for the exact error (Theorem~\ref{thm:comp_recurrences}), an explicit expression for the second-order error (Corollary~\ref{c_cs5}), and a probabilistic bound 
(Theorem~\ref{thm:comp_prob_err}) based on our martingale approach.
In particular (Remark~\ref{r_cfirst})
we note the discrepancy by a unit roundoff $u$ of existing bounds with ours,
\begin{equation*}
\widehat{s}_n= \sum_{k=1}^n(1+\rho_k)x_k, \qquad |\rho_k|\leq  3u+\mathcal{O}(nu^2).
\end{equation*}

\paragraph{Mixed precision summation}
We present bounds for mixed-precision summation, in any number of precisions,
on a computational tree (Theorem~\ref{thm:mixPrecError}). The special
case of two precisions yields the first probabilistic bounds 
(Corollary~\ref{c_fabsum}) for the
mixed-precision \texttt{FABsum} algorithm \cite{blanchard2020class}. 
\paragraph{Recommendation} 
For mono- and mixed precision algorithms,
pairwise summation based on a balanced binary tree
is the most accurate. Furthermore
(Remark~\ref{r_51}),
mixed-precision summation should try to minimize the magnitude of the intermediate
partial sums $s_k$ relative to the precision $u_k$ in which they are computed, 
that is, try to minimize $|u_ks_k|$ for all $k$.

Table \ref{tab:my_label} summarizes our contributions compared to recent related papers. 


\begin{table}
    \centering
    \begin{tabular}{l|c|c|c|c}
         & All Orders & Partial Sums & Martingale & Tree \\ \hline
       Higham/Mary \cite{higham2019new} & \checkmark & & & \\ \hline
        Ipsen/Zhou \cite{ipsen2020probabilistic} &\checkmark &&&  \\ \hline
       Higham/Mary \cite{higham2020sharper} &&\checkmark&\checkmark&  \\ \hline
        Connolly/Higham/Mary \cite{connolly2021stochastic} &\checkmark&&\checkmark&  \\ \hline
        This paper & \checkmark &\checkmark &\checkmark &\checkmark   \\ 
    \end{tabular}
    \caption{A summary of important features in probabilistic error bounds for summation.\\
Check marks in the four columns highlight the presence of the following features: the bounds hold indeed to all orders (`All Orders');
the bounds are expressed in terms of partial sums $s_k$, thus are tighter
than if they had been expressed in terms of inputs $x_k$ (`Partial Sums'); 
the bounds assume mean-independence of roundoffs rather than the stricter
notion of total independence ('Martingale'); the bounds apply 
to algorithms on any computational tree rather than just sequential summation (`Tree').}
    \label{tab:my_label}
\end{table}

\subsection{Modeling roundoff}
\label{s_model}
We assume the inputs $x_k$ are floating point numbers, that is, they can be stored exactly without error; and that the summation
produces no overflow or underflow.
Let $0<u<1$ denote the unit roundoff to nearest.

\paragraph{Individual roundoffs} 
Apply an operation
$\mathrm{op}\in\{+,-,*,/ \}$
to floating point numbers $x$ and $y$.
In the absence of underflow or
overflow, IEEE floating-point arithmetic
can be interpreted as computing
\cite{higham2002accuracy}
\begin{equation}\label{model:classical}
	\flopt(x\ \mathrm{op}\ y) = (x\ \mathrm{op}\  y)(1 + \delta_{xy}), \qquad |\delta_{xy}| \leq u.
\end{equation}

Our probabilistic bounds treat roundoffs as zero-mean mean-independent random variables.

\paragraph{Probabilistic model for sequences of roundoffs}
Assume the summation generates rounding errors $\delta_1,\delta_2,\ldots$, 
labeled in a linear
order consistent with the partial order of the underlying algorithm. 
We treat the $\delta_k$ as zero-mean
random variables 
that are mean independent\footnote{For simplicity, the conditioning also includes also those 
$\delta_{\ell}$, $1\leq \ell\leq k-1$, that are not descendants in the partial order. With stochastic rounding such $\delta_\ell$ would be fully independent
from $\delta_k$. } 
\begin{align}\label{model:second}
\E(\delta_k|\delta_1,\ldots,\delta_{k-1}) = \E(\delta_k)  = 0.
\end{align}
Mean-independence  (\ref{model:second}) is a weaker assumption than mutual independence of errors  but stronger than uncorrelated errors \cite{higham2020sharper}.
At least one mode of stochastic rounding
\cite{connolly2021stochastic} 
produces the mean-independent errors in (\ref{model:second}), but the 
stochastic rounding error bound
$|\delta_{xy}|\leq 2u$ is weaker 
than \eqref{model:classical}. 

\subsection{Probability theory}
For the derivation of the probabilistic bounds,  
we need a martingale, and a concentration inequality.

\begin{mydefinition}[Martingale \cite{mitzenmacher2005probability}] A sequence of random variables $Z_1,\ldots, Z_n$ is a martingale with respect to the sequence $X_1,\ldots,X_n$ if, for all $k\geq 1$,
	\begin{itemize}
		\item $Z_k$ is a function of $X_1,\ldots,X_k$, 
		\item $\E[|Z_k|]<\infty$, and 
		\item $\E\left[Z_{k+1}| X_1,\ldots, X_{k}\right] = Z_{k}$. 
	\end{itemize}
\end{mydefinition}

\begin{mylemma}[Azuma-Hoeffding inequality \cite{roch2015modern}]\label{l_azuma}
Let $Z_1,\ldots,Z_n$ be a martingale with respect to a sequence $X_1,\ldots, X_n$, and let $c_k$ be constants with
	\[
		 |Z_{k} - Z_{k-1}| \leq c_k, \qquad 2\leq k\leq n.
	\]
	Then for any $0<\delta<1$, with probability at least $1-\delta$,
	\begin{equation}\label{eqn:Azuma}
		|Z_n-Z_1| \leq  \left(\sum_{k=2}^nc_k^2\right)^{1/2}\sqrt{2\ln(2/\delta)}.
	\end{equation}
	\label{lemma:Azuma}
\end{mylemma}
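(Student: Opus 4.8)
The statement is the classical Azuma--Hoeffding inequality, so one legitimate option is simply to cite \cite{roch2015modern,mitzenmacher2005probability}; for completeness I would sketch the standard argument, which proceeds by the exponential-moment (Chernoff) method together with Hoeffding's lemma and the tower property of conditional expectation.

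First I would introduce the martingale difference sequence $D_k := Z_k - Z_{k-1}$ for $2\le k\le n$, so that $Z_n - Z_1 = \sum_{k=2}^n D_k$; the martingale hypothesis gives $\E[D_k\mid X_1,\dots,X_{k-1}] = 0$, and the hypothesis of the lemma gives $|D_k|\le c_k$. Fixing $\lambda>0$, the core step is to bound the moment generating function $\E[e^{\lambda(Z_n-Z_1)}]$ by peeling off one martingale difference at a time: since $e^{\lambda\sum_{k=2}^{n-1}D_k}$ is a function of $X_1,\dots,X_{n-1}$, the tower property yields
\[
  \E\!\left[e^{\lambda(Z_n-Z_1)}\right] = \E\!\left[e^{\lambda\sum_{k=2}^{n-1}D_k}\,\E\!\left[e^{\lambda D_n}\mid X_1,\dots,X_{n-1}\right]\right].
\]

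The technical heart is Hoeffding's lemma: if $W$ has $\E[W]=0$ and takes values in an interval of length at most $2c$, then $\E[e^{\lambda W}]\le e^{\lambda^2 c^2/2}$. I would prove this sublemma by using convexity of $w\mapsto e^{\lambda w}$ to dominate it on the interval by its chord, taking expectations, and then applying a second-order Taylor estimate to the resulting function of $\lambda$. This step, and in particular phrasing it for the \emph{conditional} law of $D_n$ given $X_1,\dots,X_{n-1}$ so that it slots into the recursion, is where I expect the most care to be needed. Applying the sublemma to $W=D_n$ conditionally gives $\E[e^{\lambda D_n}\mid X_1,\dots,X_{n-1}]\le e^{\lambda^2 c_n^2/2}$, and iterating the peeling step down to $k=2$ gives $\E[e^{\lambda(Z_n-Z_1)}]\le \exp\!\big(\tfrac{\lambda^2}{2}\sum_{k=2}^n c_k^2\big)$.

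Finally, Markov's inequality applied to $e^{\lambda(Z_n-Z_1)}$ gives, for every $\lambda>0$,
\[
  \prob[Z_n - Z_1 \ge t] \le \exp\!\Big(-\lambda t + \tfrac{\lambda^2}{2}\textstyle\sum_{k=2}^n c_k^2\Big),
\]
and choosing $\lambda = t/\sum_{k=2}^n c_k^2$ yields $\prob[Z_n - Z_1\ge t]\le \exp(-t^2/(2\sum_{k=2}^n c_k^2))$. Running the identical argument on the martingale $-Z_1,\dots,-Z_n$ bounds the lower tail, and a union bound gives $\prob[|Z_n-Z_1|\ge t]\le 2\exp(-t^2/(2\sum_{k=2}^n c_k^2))$. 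Setting the right-hand side equal to $\delta$ and solving for $t$ produces $t = \big(\sum_{k=2}^n c_k^2\big)^{1/2}\sqrt{2\ln(2/\delta)}$, which is exactly the bound \eqref{eqn:Azuma}. Beyond the sublemma, the only subtlety is bookkeeping in the conditioning, which the martingale definition already handles cleanly.
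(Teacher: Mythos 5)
Your proof is correct: the paper simply cites this classical result from \cite{roch2015modern} without reproducing a proof, and your argument (martingale differences, conditional Hoeffding lemma, Chernoff bound with the optimal $\lambda$, two-sided union bound, then solving $2\exp(-t^2/(2\sum_{k=2}^n c_k^2))=\delta$ for $t$) is exactly the standard derivation in the cited sources. No gaps; this matches the intended justification.
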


If one or more of bounds $|Z_k-Z_{k-1}|\leq c_k$ are permitted to fail with 
probability at most $\eta$,
then a similar but weaker version 
of the Azuma-Hoeffding inequality still holds.

\begin{mylemma}[Relaxed Azuma-Hoeffding inequality \cite{chung2006concentration}]\label{l_azuma1}
Let $0<\eta<1$; $0<\delta<1$;
and $Z_1,\ldots,Z_n$ be a martingale with respect to a sequence $X_1,\ldots, X_n$.
Let $c_k$ be constants 
so that all bounds
	\[
		 |Z_{k} - Z_{k-1}| \leq c_k, \qquad 2\leq k\leq n.
	\]
	hold simultaneously with
	probability at least $1-\eta$.
	Then with probability at least $1-(\delta+\eta)$,
	\begin{equation}\label{eqn:Azuma1}
		|Z_n-Z_1| \leq  \left(\sum_{k=2}^nc_k^2\right)^{1/2}\sqrt{2\ln(2/\delta)}.
	\end{equation}
\end{mylemma}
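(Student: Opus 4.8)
The plan is to reduce the statement to the ordinary Azuma--Hoeffding inequality (Lemma~\ref{l_azuma}) applied to a surrogate martingale, paid for by one union bound against the ``bad event'' on which some increment bound fails. Write $\mathcal{B}=\bigcup_{k=2}^{n}\{\,|Z_k-Z_{k-1}|>c_k\,\}$, so that the hypothesis is exactly $\prob(\mathcal{B})\le\eta$, and set $\mathcal{F}_k=\sigma(X_1,\dots,X_k)$ and $\tau=\min\{k\ge 2:|Z_k-Z_{k-1}|>c_k\}$, with $\tau=n+1$ if $\mathcal{B}$ does not occur. Since every $Z_j$ is a function of $X_1,\dots,X_j$, the difference $Z_j-Z_{j-1}$ is $\mathcal{F}_j$-measurable, hence $\{\tau\le k\}\in\mathcal{F}_k$; thus $\tau$ is a stopping time and $\{\tau>n\}=\mathcal{B}^{c}$.

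The core step is to build an auxiliary sequence $\widetilde{Z}_1,\dots,\widetilde{Z}_n$ with $\widetilde{Z}_1=Z_1$ that is simultaneously (a) a martingale with respect to $X_1,\dots,X_n$; (b) step-by-step controlled, so that $\E\!\left[e^{\lambda(\widetilde{Z}_k-\widetilde{Z}_{k-1})}\mid\mathcal{F}_{k-1}\right]\le e^{\lambda^{2}c_k^{2}/2}$ for every real $\lambda$, the sub-Gaussian estimate that underlies the proof of Lemma~\ref{l_azuma}; and (c) close enough to $Z$ that $\{|Z_n-Z_1|>t\}\subseteq\{|\widetilde{Z}_n-\widetilde{Z}_1|>t\}\cup\mathcal{B}$. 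The obvious candidate freezes the process just before the first violation, $\widetilde{Z}_k=Z_{\min(k,\tau-1)}$: its increments are bounded by $c_k$ outright and it coincides with $Z$ on $\mathcal{B}^{c}$, but because ``one step before $\tau$'' is not a stopping time this candidate is not a martingale, and one must subtract the induced predictable drift to repair (a), following the construction in \cite{chung2006concentration}. Granting (a)--(c), the proof closes: the standard moment generating function / Markov / optimize-over-$\lambda$ argument behind Lemma~\ref{l_azuma}, applied to $\widetilde{Z}$, gives $\prob(|\widetilde{Z}_n-\widetilde{Z}_1|>t)\le\delta$ for $t=(\sum_{k=2}^{n}c_k^{2})^{1/2}\sqrt{2\ln(2/\delta)}$, and then (c) together with $\prob(\mathcal{B})\le\eta$ yields $\prob(|Z_n-Z_1|>t)\le\delta+\eta$, which is \eqref{eqn:Azuma1}.

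The main obstacle is precisely the joint enforcement of (a), (b) and (c), which pull against one another: clipping $Z_k-Z_{k-1}$ to $[-c_k,c_k]$ keeps it bounded, helping (b), but moves its conditional mean off zero, destroying (a); stopping at $\tau$ rather than $\tau-1$ preserves (a) but permits one uncontrolled jump, at step $\tau$, of size larger than $c_\tau$, destroying (b); and the drift one subtracts to fix the candidate above can a priori perturb the endpoint and so endanger (c). Securing all three at once while keeping the clean constant is exactly the technical content of the relaxed inequality, and is where I would expect essentially all of the work to go; I would invoke \cite{chung2006concentration} for this construction rather than reprove it. As a sanity check on the statement, the case $n=2$ is immediate: $\sqrt{2\ln(2/\delta)}>1$ for all $\delta\in(0,1)$, so the threshold $c_2\sqrt{2\ln(2/\delta)}$ already exceeds $c_2$ and $|Z_2-Z_1|$ can surpass it only on $\mathcal{B}$.
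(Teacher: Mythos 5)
The paper offers no proof of this lemma beyond the citation: it is quoted as a known result from \cite{chung2006concentration}. Your outline—union bound over the bad event $\mathcal{B}$, a surrogate process that freezes before the first violation, an accurate diagnosis of why $Z_{\min(k,\tau-1)}$ has bounded increments yet fails the martingale property, and deferral of the drift-corrected construction to the same reference—is exactly the standard argument underlying that citation, so it is consistent with (indeed more detailed than) what the paper provides.
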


\section{General summation on a computational tree}\label{s_general}
We present the algorithm for general summation 
(Algorithm \ref{alg:sum});
define its computational tree
(Definition~\ref{d:tree}); and derive error expressions and a deterministic error bound  (Section~\ref{s_gexact}); and finally
set up a martingale on a computational tree (Section~\ref{s_gprob2}).

\begin{algorithm}
    \centering
    \caption{General summation \cite[Algorithm~4.1]{higham2002accuracy}} \label{alg:sum}
    \begin{algorithmic}[1]
		\REQUIRE{A set of floating point numbers $\mathcal{S} = \{x_1,\ldots,x_n\}$}
		\ENSURE{$s_n = \sum_{k=1}^n{x_k}$}
		\FOR{$k = 2:n$}
		\STATE{Remove two elements $x$ and $y$ from $\mathcal{S}$}
		\STATE{$s_k = x + y$} \label{line:summation}
		\STATE{Add $s_k$ to $\mathcal{S}$}
		\ENDFOR
    \end{algorithmic}
\end{algorithm}

Denote by $s_k=\sum_{j=1}^k{x_j}$ the exact partial sum, by $\widehat{s}_k$ the sum computed in floating point arithmetic,
and by $e_k=\widehat{s}_k-s_k$ the 
absolute forward error, $2\leq k\leq n$.

\begin{mydefinition}[Computational tree for Algorithm~\ref{alg:sum}]\label{d:tree}
The partial order of pairwise summations in Algorithm~\ref{alg:sum} is represented
by
a binary tree with $2n-1$ vertices:
$n-1$ pairwise sums $s_2,\ldots, s_n$
to sum $n$ inputs $x_1,\ldots, x_n$.
Specifically,
\begin{itemize}
    \item Each vertex represents a pairwise sum $s_k$ or an input $x_k$.
    \item The root is the final sum $s_n$, and the leaves are the inputs $x_1,\ldots,x_n$. 
    \item Each pairwise sum $s_k = x+y$
    is a vertex with downward edges $(s_k,x)$ and $(s_k,y)$. Vertices $x$ and $y$ are the children of $s_k$.
    
\end{itemize}
The tree defines a partial ordering. 
    We say $j\prec k$ if $s_j$ is a descendant of $s_k$,
    and $j\preceq k$ if $s_j=s_k$ is possible. 
    \begin{itemize}
    \item The height of a node is the length of the longest downward path from that node to a leaf. 
\item Leaves have height zero. 
    \item The height of the tree is the height of its root. 
    Sequential summation yields a tree of height $n-1$.
\end{itemize}
\end{mydefinition}
Algorithm~\ref{alg:sum} imposes a {\em topological ordering} on the graph: $j\prec k$ implies that $j< k$. Thus if the nodes are visited in the order $s_2,\ldots,s_n$, no node is visited before its children.  Figure \ref{fig:DAG} shows two 
computational trees, one 
of height $n-1$ for sequential summation; and another of
height $\lceil \log_2 n\rceil$
for pairwise summation.

    \begin{figure}
       \centering
       \begin{minipage}{0.45\textwidth}
         \centering
        		\begin{tikzpicture}[scale=1.2]
		\tikzstyle{every node}+=[inner sep=0pt]
		\fill (0,0) circle (0.08) node [below=5] {$x_1$}; 
		\fill (1,0) circle (0.08) node [below=5] {$x_2$};  
		\fill (1.5,0.75) circle (0.08) node [below=5] {$x_3$}; 
		\fill (2,1.5) circle (0.08) node [below=5] {$x_4$}; 
		\draw (0,0) -- (1.5,2.25) -- (2,1.5); 
		\draw (0.5,0.75)--(1,0); 
		\draw (1.5,0.75) -- (1,1.5); 
	    \fill (0.5,0.75) circle (0.08) node [above left=3] {$s_2$};
		\fill (1,1.5) circle (0.08) node [above left =3] {$s_3$};
		\fill (1.5,2.25) circle (0.08) node [above left=3] {$s_4$};
		\end{tikzpicture}

      \end{minipage}\hfill
      \begin{minipage}{0.45\textwidth}
          \centering
        \begin{tikzpicture}[scale=1.2]
		\tikzstyle{every node}+=[inner sep=0pt]
		\fill (0,0) circle (0.08) node [below=5] {$x_1$}; 
		\fill (1,0) circle (0.08) node [below=5] {$x_2$};  
		\fill (2,0) circle (0.08) node [below=5] {$x_3$}; 
		\fill (3,0) circle (0.08) node [below=5] {$x_4$}; 
		\draw (0,0)-- (0.5,0.75) -- (1,0); 
		\draw (2,0) -- (2.5,0.75) -- (3,0); 
		\draw (0.5,0.75) -- (1.5,1.5) -- (2.5,0.75); 
		\fill (0.5,0.75) circle (0.08) node [above=5] {$s_2$};
		\fill (2.5,0.75) circle (0.08) node [above=5] {$s_3$};
		\fill (1.5,1.5) circle (0.08) node [above=5] {$s_4$};
		\end{tikzpicture}
        \end{minipage}
        \caption{Computational trees for two different summation orderings in
        Algorithm~\ref{alg:sum} for $n=4$.
        Left: sequential (a.k.a.~recursive) summation. Right: pairwise summation.}
        \label{fig:DAG}
    \end{figure}
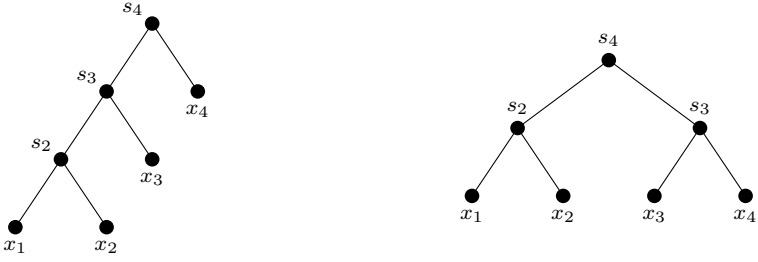

To make our bounds as tight as possible, we express them in terms of partial sums.
However, the dependence on the height of the computational tree is more
explicit when the bounds are expressed in terms of the inputs.
Below is the translation from partial sums to inputs.

\begin{mylemma}[Relation between partial sums and inputs]\label{l_rel}
If $h$ is the height of the computational tree in Algorithm \ref{alg:sum}, then
\begin{align*}
\sum_{k=2}^n{|s_k|}\leq h\,\sum_{j=1}^n{|x_j|},\qquad 
\sqrt{\sum_{k=2}^n{s_k^2}}\leq 
\sqrt{h}\,\sum_{j=1}^n{|x_j|}.
\end{align*}
\end{mylemma}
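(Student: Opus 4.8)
The plan is to use the fact that every partial sum produced by Algorithm~\ref{alg:sum} is an \emph{exact} sum of the inputs sitting at the leaves of its subtree, and then, for each input $x_j$, to count how many of the partial sums $s_2,\dots,s_n$ contain it. Concretely, for each pairwise-sum vertex $s_k$, $2\le k\le n$, let $\mathcal{L}_k\subseteq\{1,\dots,n\}$ be the set of indices $j$ for which the leaf $x_j$ is a descendant of $s_k$. Unwinding the recursion $s_k=x+y$ down to the leaves --- a one-line induction on the height of $s_k$ --- gives $s_k=\sum_{j\in\mathcal{L}_k}x_j$, so that by the triangle inequality
\[
|s_k|\ \le\ \sum_{j\in\mathcal{L}_k}|x_j|\ \le\ \sum_{j=1}^n|x_j|,\qquad 2\le k\le n.
\]

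For the first bound of the lemma I would sum this over $k$ and interchange the order of summation,
\[
\sum_{k=2}^n|s_k|\ \le\ \sum_{k=2}^n\sum_{j\in\mathcal{L}_k}|x_j|\ =\ \sum_{j=1}^n|x_j|\cdot\#\{k:\,2\le k\le n,\ j\in\mathcal{L}_k\},
\]
and then note that $j\in\mathcal{L}_k$ precisely when $s_k$ lies on the unique path from leaf $x_j$ up to the root; hence $\#\{k:\,j\in\mathcal{L}_k\}$ equals the depth of leaf $x_j$ in the tree, which by Definition~\ref{d:tree} is at most the length $h$ of the longest downward root-to-leaf path. Substituting the bound $h$ for each count yields $\sum_{k=2}^n|s_k|\le h\sum_{j=1}^n|x_j|$.

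For the $\ell_2$ bound I would simply recycle the two estimates already obtained,
\[
\sum_{k=2}^n s_k^2\ \le\ \Big(\max_{2\le k\le n}|s_k|\Big)\sum_{k=2}^n|s_k|\ \le\ \Big(\sum_{j=1}^n|x_j|\Big)\,h\sum_{j=1}^n|x_j|\ =\ h\Big(\sum_{j=1}^n|x_j|\Big)^{\!2},
\]
using $|s_k|\le\sum_j|x_j|$ to bound the maximum and the first part of the lemma to bound the sum; taking square roots gives the second inequality.

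I do not expect a genuine obstacle here. The only step needing care is the bookkeeping in the counting argument: one must argue cleanly that the partial sums containing a fixed input $x_j$ are exactly the internal (pairwise-sum) vertices on the root-to-leaf path through $x_j$, and that their number --- the depth of that leaf --- never exceeds the tree height $h$. A fully self-contained alternative would be a structural induction, proving for every subtree that the sum of $|s_v|$ over its internal vertices is at most (height of the subtree) times the sum of $|x_j|$ over its leaves, then splitting at the root; but the swap-of-summation argument above is shorter, and that is the one I would write out.
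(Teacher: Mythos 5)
Your proposal is correct and follows essentially the same route as the paper: expand each partial sum over the inputs at its descendant leaves, swap the order of summation and bound the number of partial sums containing a given input by the tree height $h$, then obtain the $\ell_2$ bound from $\sum_k s_k^2 \leq \bigl(\max_k|s_k|\bigr)\sum_k|s_k|$. Your write-up merely spells out the ancestor-counting step (depth of each leaf $\leq h$) that the paper leaves implicit.
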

\begin{proof}
The first bound follows from the triangle inequality:
\[
    \sum_{k=2}^n|s_k| \leq \sum_{k=2}^n \sum_{j\prec k}|x_j| \leq \sum_{j=1}^n\sum_{j\prec k \preceq n}|x_j| \leq h\sum_{j=1}^n|x_j|,
\]
where in this context $j\prec k$ denotes the set of all leaves $x_j$ that are descendants of node $k$. 
The second bound follows from the first:  
\[
 \sum_{k=2}^n s_k^2 \leq \max_{2\leq j \leq n}|s_j|  \sum_{k=2}^n|s_k| \leq \left(\sum_{j=1}^n|x_j|\right)\left(h\sum_{j=1}^n|x_j|\right) = h\left(\sum_{j=1}^n|x_j|\right)^2. 
\]
\end{proof}

\subsection{Explicit expressions and deterministic bounds for errors on computational trees}\label{s_gexact}
We present two expressions for the error in Algorithm~\ref{alg:sum}
(Lemmas \ref{lemma:forwardErrorGenl} and \ref{lemma:forwardErrorRec}),
and a deterministic bound (Theorem~\ref{t_gdet}).

We generalize the error
for sequential summation in 
\cite[Lemma 3.1]{hallman2021refined}
to errors on computational trees.

\begin{mylemma}[First explicit expression]\label{lemma:forwardErrorGenl}
The error in Algorithm \ref{alg:sum} equals
	\begin{equation}\label{eqn:forwardErrorGenl}
		e_n = \widehat{s}_n - s_n = \sum_{k=2}^ns_k\delta_k\prod_{k\prec j \preceq n}(1+\delta_j). 
	\end{equation}
\end{mylemma}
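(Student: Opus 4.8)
The plan is to first extract a one-step error recursion from the rounding model \eqref{model:classical}, and then unroll it by induction over the tree.

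\textbf{Step 1 (local recursion).} Each internal vertex $s_k=x+y$ is computed as $\widehat{s}_k=\flopt(\widehat{x}+\widehat{y})=(\widehat{x}+\widehat{y})(1+\delta_k)$ with $|\delta_k|\le u$, where $\widehat{x},\widehat{y}$ are the computed values of the two children of $k$ (equal to the exact input if a child is a leaf). Adopting the convention $e_j:=0$ whenever vertex $j$ is a leaf, and using that the exact values of the children sum to $s_k$, I obtain
\[
e_k=\widehat{s}_k-s_k=s_k\delta_k+(1+\delta_k)\bigl(e_{c_1(k)}+e_{c_2(k)}\bigr),
\]
where $c_1(k),c_2(k)$ denote the children of $k$.

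\textbf{Step 2 (induction over the subtree).} I would prove the more general identity: for every internal vertex $m$,
\[
e_m=\sum_{\substack{k\ \mathrm{internal}\\ k\preceq m}} s_k\,\delta_k\prod_{k\prec j\preceq m}(1+\delta_j),
\]
by induction on the height of $m$ (equivalently, on the size of the subtree rooted at $m$). The base case is a height-one vertex $m$ with two leaf children: the recursion of Step 1 gives $e_m=s_m\delta_m$, which matches the single right-hand-side term (empty product). For the inductive step, apply Step 1 at $m$; if a child $c_i(m)$ is a leaf the term $e_{c_i(m)}$ vanishes, and otherwise the induction hypothesis applies to $c_i(m)$. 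The one identity to verify is that for any internal $k\preceq c_i(m)$,
\[
\prod_{k\prec j\preceq m}(1+\delta_j)=(1+\delta_m)\prod_{k\prec j\preceq c_i(m)}(1+\delta_j),
\]
since the ancestors of $k$ up to $m$ are exactly its ancestors up to $c_i(m)$ together with the extra root-side vertex $m$. Multiplying the two inductive sums by $(1+\delta_m)$ as dictated by the recursion, adding the stand-alone term $s_m\delta_m$ (which equals $s_m\delta_m$ times the empty product), and observing that the internal vertices of the two child-subtrees together with $m$ itself are precisely the internal $k\preceq m$, completes the step. Taking $m=n$: every vertex satisfies $k\preceq n$ and the internal vertices are $s_2,\dots,s_n$, so the identity becomes \eqref{eqn:forwardErrorGenl}.

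\textbf{Main obstacle.} There is no substantive obstacle here — the content is just the one-step recursion of Step 1 plus a clean induction. The only place requiring care is the bookkeeping: handling the leaf/internal distinction (dispatched by the convention $e_j=0$) and getting the ancestor-product identity in Step 2 exactly right, which is the natural spot for an off-by-one slip in the index range $k\prec j\preceq m$.
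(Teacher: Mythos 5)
Your proof is correct. Note that the paper itself gives no proof of this lemma: it is stated as the tree generalization of the sequential-summation identity in \cite[Lemma 3.1]{hallman2021refined}, so your argument fills in a derivation the paper omits. Your Step~1 is exactly the local identity the paper does derive in the proof of Lemma~\ref{lemma:forwardErrorRec}, namely $\widehat{s}_k=(\widehat{x}+\widehat{y})(1+\delta_k)$ giving $e_k=s_k\delta_k+(1+\delta_k)(e_x+e_y)$ (there written as $e_k=f_k+(s_k+f_k)\delta_k$ with $f_k=e_x+e_y$); the two results differ only in how this recursion is unrolled. The paper unravels it into the form \eqref{eqn:forwardErrorRec}, which keeps the child-errors $f_k$ explicit because that form is a martingale in the original summation order, whereas your induction over subtrees pushes all later factors $(1+\delta_j)$ onto each local error $s_k\delta_k$, yielding the product form \eqref{eqn:forwardErrorGenl}; your ancestor-path identity $\prod_{k\prec j\preceq m}(1+\delta_j)=(1+\delta_m)\prod_{k\prec j\preceq c_i(m)}(1+\delta_j)$ is handled correctly, and the leaf convention $e_j=0$ is justified since the paper assumes the inputs are stored exactly.
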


Lemma~\ref{lemma:forwardErrorGenl} 
represents the forward error as a sum of local errors at a node, each perturbed by subsequent rounding errors. Truncating \eqref{eqn:forwardErrorGenl} yields the first order bound
\begin{equation}\label{e_gdet1}
	e_n = \sum_{k=2}^ns_k\delta_k + \mathcal{O}(u^2),
\end{equation}
which extends the result for sequential summation \cite[Lemma 2.1]{higham2020sharper}. Lemma~\ref{lemma:forwardErrorGenl} also allows us to conveniently obtain a deterministic error bound. 

\begin{theorem}\label{t_gdet}
If $h$ is the height of the computational tree for Algorithm~\ref{alg:sum} and
$\lambda_h\equiv (1+u)^h$,
then the error in Algorithm~\ref{alg:sum} is bounded by
\begin{align*}
|e_n| &\leq \sum_{k=2}^n|s_k||\delta_k|\prod_{k\prec j \preceq n}|1+\delta_j| 
\leq \lambda_h\,u\,\sum_{k=2}^n|s_k|\\
&\leq \lambda_h\,h\,u\,\sum_{j=1}^n{|x_j|}.
\end{align*}
\end{theorem}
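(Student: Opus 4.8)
The plan is to derive the three-part chain of inequalities in Theorem~\ref{t_gdet} directly from the explicit error expression in Lemma~\ref{lemma:forwardErrorGenl}, combined with the rounding error bound $|\delta_k|\leq u$ from \eqref{model:classical} and the partial-sum-to-inputs translation in Lemma~\ref{l_rel}. The first inequality is immediate: starting from $e_n=\sum_{k=2}^n s_k\delta_k\prod_{k\prec j\preceq n}(1+\delta_j)$, apply the triangle inequality to the sum and the submultiplicativity of absolute value to the product, giving $|e_n|\leq \sum_{k=2}^n|s_k|\,|\delta_k|\prod_{k\prec j\preceq n}|1+\delta_j|$.

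For the second inequality I would bound each factor in the generic summand. We have $|\delta_k|\leq u$ by \eqref{model:classical}, so it remains to bound the product $\prod_{k\prec j\preceq n}|1+\delta_j|$. Each factor satisfies $|1+\delta_j|\leq 1+u$, so the product is at most $(1+u)^{m_k}$, where $m_k$ is the number of indices $j$ with $k\prec j\preceq n$, i.e.\ the number of proper ancestors of node $k$ on the path to the root (plus the root itself, depending on convention), which equals the depth of node $k$. The key observation is that the depth of any node is at most the height $h$ of the tree, so $\prod_{k\prec j\preceq n}|1+\delta_j|\leq (1+u)^h=\lambda_h$ uniformly in $k$. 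Pulling the uniform bound $\lambda_h u$ out of the sum yields $|e_n|\leq \lambda_h\,u\,\sum_{k=2}^n|s_k|$. The third inequality is then just the first bound of Lemma~\ref{l_rel}, namely $\sum_{k=2}^n|s_k|\leq h\sum_{j=1}^n|x_j|$, multiplied through by $\lambda_h u$.

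The only genuinely delicate point is the claim that the exponent $m_k$ in $(1+u)^{m_k}$ never exceeds $h$. This requires checking that the set $\{j : k\prec j\preceq n\}$ — the rounding errors that subsequently perturb the local error at node $k$ — indexes exactly the ancestors of $k$ up to the root, and that the length of that ancestor chain is bounded by the tree height. In a binary tree the distance from any node to the root is at most the height of the tree (the height being the length of the longest root-to-leaf path), so this holds; one should just be careful about whether $\preceq$ includes the root node $s_n$ and whether leaves contribute, but in all cases the count is $\leq h$. Everything else is routine: no probabilistic machinery or martingale argument is needed here, since this is the purely deterministic worst-case bound, so Lemmas~\ref{l_azuma} and~\ref{l_azuma1} play no role. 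I expect the write-up to be only a few lines, with the bulk of the justification being the sentence identifying $m_k$ with node depth.
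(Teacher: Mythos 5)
Your proposal is correct and follows essentially the same route as the paper: the first inequality is the triangle inequality applied to the explicit expression of Lemma~\ref{lemma:forwardErrorGenl}, the middle one uses $|\delta_k|\leq u$ and the fact that the product over $\{j: k\prec j\preceq n\}$ has at most $h$ factors (each bounded by $1+u$), and the last is the first bound of Lemma~\ref{l_rel}. The paper states this in one line; your extra care in identifying the exponent with the depth of node $k$ and bounding it by $h$ is exactly the implicit step the paper leaves to the reader.
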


\begin{proof}
The first bound is a direct consequence of Lemma~\ref{lemma:forwardErrorGenl}, while the last bound follows from Lemma~\ref{l_rel}. 
\end{proof}

\begin{myremark}\label{r_11}
A bound \cite[(4.3)]{higham2002accuracy} similar to the first one in Theorem~\ref{t_gdet},
\[|e_n|\leq u\sum_{k=2}^n|\widehat{s}_k|,\] 
is accompanied by the following observation:
\begin{quote}
\textit{In designing or choosing a summation method to achieve high accuracy, the aim should be to minimize the absolute values of the intermediate sums $s_k$.}
\end{quote}
Reducing the height of the computational tree often helps in this regard. The dependence on the height $h$ is explicitly visible in the second bound of Theorem~\ref{t_gdet}.
\end{myremark}

Since the sum in Lemma \ref{lemma:forwardErrorGenl} is not a martingale with respect to the errors $\delta_2,\ldots,\delta_n$, we present
an alternative geared towards the error model~(\ref{model:second}):
The sum in Lemma~\ref{lemma:forwardErrorRec} is a martingale if summed
in the original order, as shown in Section~\ref{s_gprob2}. 
Lemma~\ref{lemma:forwardErrorRec} 
also expresses the error in terms of exact partial sums, thereby making it more 
amenable to a probabilistic analysis
than the computed partial sums in 
$e_n = \sum_{k=2}^n\widehat{s}_k\tilde{\delta}_k$ \cite[(4.2)]{higham2002accuracy}.

\begin{mylemma}[Second explicit expression]\label{lemma:forwardErrorRec}
The error in Algorithm \ref{alg:sum} equals
	\begin{equation}\label{eqn:forwardErrorRec}
		e_n = \widehat{s}_n - s_n = \sum_{j=2}^n{(s_j+f_j)\delta_j}, 
	\end{equation}
where $f_j=0$ for all nodes whose children
are leaves, that is, represent a sum of two inputs $x_i$ and $x_j$. For all other nodes,
the {\em child-errors} satisfy the recurrence
	\begin{equation}\label{eqn:frecurrence}
f_k\equiv\sum_{j \prec k}(s_j + f_j)\delta_j.
	\end{equation}
\end{mylemma}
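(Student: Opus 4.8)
The plan is to prove, by induction on the height of a node $k$ in the computational tree, the following statement for every internal node (pairwise sum) $k$, $2\le k\le n$:
\[
 e_k=\widehat s_k-s_k=\sum_{j\preceq k}(s_j+f_j)\,\delta_j ,
\]
where the sum runs over the internal nodes $j$ with $j\preceq k$, and where, additionally, $f_k=e_a+e_b$ with $a,b$ the two children of $k$. Here we use the convention that a leaf carries no error, $e_i=0$, and that the sum over internal $j\preceq i$ is empty when $i$ is a leaf (legitimate because the inputs are stored exactly). Taking $k=n$, the root, and recalling that every internal node satisfies $j\preceq n$, this yields \eqref{eqn:forwardErrorRec}.

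First I would record the local recursion at a node. If $k$ is internal with children $a,b$, then the exact value at a node is the sum of the values at its children, so $s_k=s_a+s_b$, and the rounding model \eqref{model:classical} gives $\widehat s_k=(\widehat s_a+\widehat s_b)(1+\delta_k)=(s_k+e_a+e_b)(1+\delta_k)$. Subtracting $s_k$,
\[
 e_k=e_a+e_b+(s_k+e_a+e_b)\,\delta_k .
\]
For the base case, both children of $k$ are leaves, so $e_a=e_b=0$ and this reduces to $e_k=s_k\delta_k$; since $f_k=0$ by definition and $k$ is the only internal node with $j\preceq k$, both parts of the claim hold.

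For the inductive step, let $k$ have children $a,b$, each of smaller height, and assume the claim for all nodes of smaller height (with leaves covered by the convention above). The strict descendants of $k$ decompose as the disjoint union of the subtree rooted at $a$ and the subtree rooted at $b$; intersecting with the internal nodes, the internal $j$ with $j\prec k$ are exactly the internal $j\preceq a$ together with the internal $j\preceq b$. Adding the two inductive hypotheses therefore gives
\[
 e_a+e_b=\sum_{j\preceq a}(s_j+f_j)\delta_j+\sum_{j\preceq b}(s_j+f_j)\delta_j=\sum_{j\prec k}(s_j+f_j)\delta_j=f_k ,
\]
which both establishes $f_k=e_a+e_b$ and identifies $f_k$ with the recurrence \eqref{eqn:frecurrence}. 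Substituting $e_a+e_b=f_k$ into the local recursion yields $e_k=f_k+(s_k+f_k)\delta_k=\sum_{j\prec k}(s_j+f_j)\delta_j+(s_k+f_k)\delta_k=\sum_{j\preceq k}(s_j+f_j)\delta_j$, completing the induction.

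The only point that needs care is the bookkeeping in the partial order — verifying that the $f_k$-recurrence telescopes exactly into ``the total error inherited from the two children of $k$'' — together with the boundary convention that leaves carry zero error, which is what forces $f_k=0$ precisely when both children of $k$ are inputs. A more computational alternative would be to expand the product $\prod_{k\prec j\preceq n}(1+\delta_j)$ in Lemma~\ref{lemma:forwardErrorGenl} and regroup the terms by node, but the direct tree induction above is shorter and makes the interpretation of the child-errors $f_k$ transparent.
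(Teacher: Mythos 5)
Your proof is correct and follows essentially the same route as the paper: you derive the same local relation $e_k=f_k+(s_k+f_k)\delta_k$ with $f_k$ equal to the sum of the children's errors, and then unravel it over the tree — the paper simply states ``unravel the recurrence'' where you make the unraveling explicit by induction on node height. The boundary convention (leaves exact, hence $f_k=0$ when both children are leaves) is handled correctly, so no gaps.
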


\begin{proof} Express the computed parent sum in 
line \ref{line:summation} of Algorithm \ref{alg:sum} 
as the sum of the computed children $\widehat{x}=x+e_x$ and $\widehat{y}=y+e_y$,  
\begin{equation*}
    \widehat{s}_k = (\widehat{x} + \widehat{y})(1+\delta_k), \qquad 2\leq k\leq n,
\end{equation*}
where $e_x=e_y=0$ if
$x$ and $y$ are inputs $x_i$ and $x_j$.
Highlight the error in the computed children,
\begin{align*}
s_k + e_k&=\widehat{s}_k= ((x + e_x) + (y+e_y))(1+\delta_k)
	=(s_k+e_x+e_y)(1+\delta_k)\\
&=	\underbrace{(e_x + e_y)}_{f_k}(1+\delta_k)
+s_k\delta_k+s_k
	\end{align*}
	to obtain the error in the computed parent 
\begin{equation*}
  e_k = f_k + (s_k + f_k)\delta_k,\qquad 
    2\leq k\leq n.
\end{equation*}
Now unravel the recurrence for $f_k$,
where $f_j=0$ for all nodes $j$ with two
leaf children.
\end{proof}

We refer to the terms $f_k$ as {\em child-errors}, since at any given node, $f_k$ is equal to the sum of the errors in the computed children.

\begin{myexample}
A pairwise tree summation for $n=8$ illustrates the recurrences for the child-errors in Lemma~\ref{lemma:forwardErrorRec}.
\begin{enumerate}
\item Sums of leaf nodes:
The exact sums are
\begin{align*}
s_2=x_1+x_2,\quad s_3=x_3+x_4,\quad s_4=x_5+x_6,\quad s_5=x_7+x_8,
\end{align*}
while the computed sums are
$\hat{s}_j=s_j+s_j\delta_j$ with child-errors $f_j=0$ for $2\leq j\leq 5$.
\item Second level:
The exact sums are
$s_6=s_2+s_3$ and $s_7=s_4+s_5$ while the computed sums are
\begin{align*}
\hat{s}_6&=(\hat{s}_2+\hat{s}_3)(1+\delta_6)
= \underbrace{(s_2\delta_2+s_3\delta_3)}_{f_6}(1+\delta_6)+s_6\delta_6+s_6\\
 &= f_6+ (s_6+f_6)\delta_6 +s_6\\
 \hat{s}_7&=(\hat{s}_4+\hat{s}_5)(1+\delta_7)
= \underbrace{(s_4\delta_4+s_5\delta_5)}_{f_7}(1+\delta_7)+s_7\delta_7+s_7\\
& = f_7+ (s_7+f_7)\delta_7 +s_7.
\end{align*}
With $f_j=0$, $2\leq j\leq 5$, the child-errors are 
\begin{align*}
f_6&=s_2\delta_2+s_3\delta_3=(s_2+f_2)\delta_2+(s_3+f_3)\delta_3=
\sum_{j \prec 6}{(s_j + f_j)\delta_j}\\
f_7&=s_4\delta_4+s_5\delta_5=(s_4+f_4)\delta_4+(s_5+f_5)\delta_5=
\sum_{j \prec 7}{(s_j + f_j)\delta_j}.
\end{align*}
\item Final level:
The exact sum is
$s_8=s_6+s_7$ while the computed sum is
\begin{align*}
\hat{s}_8&=(\hat{s}_6+\hat{s}_7)(1+\delta_8)\\
&= \underbrace{\left(f_6+(s_6+f_6)\delta_6+f_7+(s_7+f_7)\delta_7\right)}_{f_8}(1+\delta_8)+s_8\delta_8+s_8\\
& = f_8+ (s_8+f_8)\delta_8 +s_8,
\end{align*}
with child-error
\begin{align*}
f_8&=f_6+f_7+(s_6+f_6)\delta_6+(s_7+f_7)\delta_7\\
&=\sum_{j=2}^5{(s_j+f_j)\delta_j}+(s_6+f_6)\delta_6+(s_7+f_7)\delta_7
=\sum_{j=2}^7{(s_j+f_j)\delta_j}.
\end{align*}
The total error is
\begin{align*}
e_8&= f_8+ (s_8+f_8)\delta_8 
=\sum_{j=2}^7{(s_j+f_j)\delta_j}+ (s_8+f_8)\delta_8 
=\sum_{j=2}^8{(s_j+f_j)\delta_j}.
\end{align*}
\end{enumerate}
\end{myexample}

\subsection{Setting up martingales on computational trees}\label{s_gprob2}
We derive a probabilistic bound (Lemma~\ref{lemma:fBound})
for the child-errors in Lemma~\ref{lemma:forwardErrorRec},
followed by two types of probabilistic bounds for the error in Algorithm~\ref{alg:sum}: one in terms of a recurrence relation (Theorem~\ref{thm:model2Theorem} and Corollary~\ref{c_28}) and a second in closed form
(Theorem~\ref{thm:model2Analysis} and Corollary~\ref{c_210}).

We introduce our first probability 
parameter $\eta$ which controls
terms  of order two and higher in $e_n$, and
guarantees, with probability
at least $1-\eta$, that all child errors
$|f_k|$ are simultaneously bounded.

\begin{mylemma} \label{lemma:fBound}
Let $L$ be the number of nodes in the computational tree whose children are both leaves, and let $\tilde{n}\equiv n-L-1$ be the number of nodes with at least one non-leaf child. Without loss of generality let nodes $2,\ldots,L+1$ be the ones with two leaf children. Let $0<\eta<1$ and $\lambda_{\tilde{n},\eta} \equiv  \sqrt{2\ln(2\tilde{n}/\eta)}$. Then under (\ref{model:second}), with probability at least $1-\eta$, the child errors
in (\ref{eqn:frecurrence}) are bounded by
\begin{align*}
|f_k|\leq F_{k,\tilde{n},\eta}, \qquad 2\leq k\leq n,
\end{align*}
where
\begin{equation}\label{eqn:FBoundRecurrence}
    F_{k,\tilde{n},\eta}= \begin{cases}
    0 & 2\leq k \leq L+1, \\
    \lambda_{\tilde{n},\eta}u\left(\sum_{j\prec k}(|s_j|+F_{j,\tilde{n},\eta})^2\right)^{1/2} & L+2\leq k \leq n,
    \end{cases}
\end{equation}
\end{mylemma}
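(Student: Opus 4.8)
The plan is to prove the simultaneous bound on the child-errors by induction on the height of each node in the computational tree, using the recurrence \eqref{eqn:frecurrence} to set up, for each node $k$ with at least one non-leaf child, a martingale in the linearized error order, and then applying the relaxed Azuma–Hoeffding inequality (Lemma~\ref{l_azuma1}) together with a union bound over the $\tilde n$ relevant nodes. The base case is immediate: for $2\leq k\leq L+1$ the node has two leaf children, so $f_k=0$ by definition and the asserted bound $F_{k,\tilde n,\eta}=0$ holds with no randomness consumed.

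For the inductive step, fix a node $k$ with $L+2\leq k\leq n$ and suppose that for every $j\prec k$ the bound $|f_j|\leq F_{j,\tilde n,\eta}$ has already been established (on the relevant event). By \eqref{eqn:frecurrence} we have $f_k=\sum_{j\prec k}(s_j+f_j)\delta_j$. First I would fix a linear order of the indices $\{j:j\prec k\}$ consistent with the topological order, and define the partial sums $Z_m=\sum_{j\prec k,\;j\le j_m}(s_j+f_j)\delta_j$ as a process indexed by this order. The key structural point is that $s_j$ is a deterministic constant and $f_j$ is a function of the errors $\delta_\ell$ with $\ell\prec j\prec k$, hence of errors strictly preceding $\delta_j$ in the linear order; therefore $s_j+f_j$ is measurable with respect to the earlier $\sigma$-algebra, and by the mean-independence assumption \eqref{model:second}, $\E[(s_j+f_j)\delta_j\mid \text{earlier }\delta's]=(s_j+f_j)\E[\delta_j\mid\cdots]=0$. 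This makes $(Z_m)$ a martingale with increments bounded by $|(s_j+f_j)\delta_j|\leq (|s_j|+|f_j|)u\leq (|s_j|+F_{j,\tilde n,\eta})u$ on the inductive event, so $c_j=(|s_j|+F_{j,\tilde n,\eta})u$. Applying Lemma~\ref{l_azuma1} (with the failure probability $\eta$ there absorbing the inductive event on which the $|f_j|$ bounds hold), with probability at least $1-(\delta+\eta_{<k})$ we get $|f_k|=|Z_{\text{last}}-Z_0|\leq u\,\sqrt{2\ln(2/\delta)}\,\bigl(\sum_{j\prec k}(|s_j|+F_{j,\tilde n,\eta})^2\bigr)^{1/2}$.

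The remaining step is bookkeeping of the probabilities so that the final budget is exactly $\eta$. Here I would allocate $\delta=\eta/\tilde n$ to each of the $\tilde n$ nodes $k$ with a non-leaf child, so that the per-node deviation bound gives the constant $\lambda_{\tilde n,\eta}=\sqrt{2\ln(2\tilde n/\eta)}$ appearing in \eqref{eqn:FBoundRecurrence}; a union bound over these $\tilde n$ nodes yields total failure probability at most $\tilde n\cdot(\eta/\tilde n)=\eta$. One must be a little careful that the relaxed inequality is not applied recursively in a way that compounds the $\eta$-term multiplicatively: the clean way is to define the single event $E$ that \emph{all} per-node Azuma bounds hold, show $\prob(E)\geq 1-\eta$ directly by the union bound over the $\tilde n$ events (each an unconditional statement about the $\delta$'s), and observe that on $E$ the recurrence \eqref{eqn:FBoundRecurrence} is satisfied simultaneously for all $k$ by the height induction. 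The main obstacle, and the point deserving the most care, is exactly this interplay between the conditioning structure (making sure $s_j+f_j$ is genuinely measurable with respect to the errors preceding $\delta_j$ in the chosen linear order, which is where the topological-ordering property $j\prec k\Rightarrow j<k$ from Definition~\ref{d:tree} is used) and the probability accounting, so that the $\tilde n$ martingale arguments combine additively rather than nesting.
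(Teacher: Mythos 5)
Your proposal is correct and follows essentially the same route as the paper: induct over the tree (the paper inducts on the node index in topological order, you on height), express $f_k$ via \eqref{eqn:frecurrence} as a martingale whose increments are bounded by $u(|s_j|+F_{j,\tilde n,\eta})$ on the inductive event, apply Azuma--Hoeffding with per-node budget $\eta/\tilde n$ to get $\lambda_{\tilde n,\eta}$, and account for the $\tilde n$ failure probabilities additively. The only cosmetic difference is that you build the martingale on the sub-filtration $\{\delta_j: j\prec k\}$ (justified by the tower property), whereas the paper pads the sum with indicators $\mathds{1}_{j\prec k}$ to keep the full sequence $\delta_1,\ldots,\delta_{k-1}$; both devices are equivalent.
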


\begin{proof}
This is an induction proof over $k$ and the failure probability $\eta$. 

\paragraph{Induction basis $2\leq k \leq L+1$}
Since the inputs (leaves) are assumed to be exact, $f_k\equiv 0$ in (\ref{eqn:frecurrence}). Thus 
$|f_k|\leq F_{k,\tilde{n},\eta}$ clearly holds. 
    
\paragraph{Induction hypothesis}
Assume that the $k-2$  bounds
\begin{align*}
|f_j|\leq F_{j,\tilde{n},\eta}, \qquad 2\leq j\leq k-1\end{align*}
hold simultaneously with probability at least $1-\frac{k-L-2}{\tilde{n}}\eta$. 
\paragraph{Induction step}
Move the precedence relation $j\prec k$ inside the sum, 
in order to write the child-error recurrence \eqref{eqn:frecurrence}
as a contiguous sum,
    \begin{equation*}
        f_k = \sum_{j=2}^{k-1}{(s_j+f_j)\delta_j\mathds{1}_{j\prec k}}.
    \end{equation*}
With $\delta_1= 0$, the sequence  
$Z_1\equiv 0$, $Z_i \equiv \sum_{j=2}^i(s_j + f_j)\delta_j\mathds{1}_{j\prec k}$,
$2\leq i\leq k-1$,
   is a martingale with respect to $\delta_1,\ldots, \delta_{k-1}$. 
According to the induction hypothesis, the $k-2$ bounds  
   \begin{equation*}
       |Z_i - Z_{i-1}| \leq \begin{cases} u(|s_i| + F_{i,\tilde{n},\eta}) & i \prec k,\\
       0 & i \nprec k, 
       \end{cases}\qquad  2\leq i \leq k-1, 
   \end{equation*}
   hold simultaneously
with probability at least $1-\tfrac{k-L-2}{\tilde{n}}\eta$. 
Since $f_k=Z_{k-1}-Z_1$,
setting $\delta=\eta/\tilde{n}$, Lemma~\ref{l_azuma} implies that with probability at least $1-\delta$
\begin{equation*}
    |f_k| \leq \lambda_{\tilde{n},\eta}u\left(\sum_{j\prec k} (|s_j| + F_{j,\tilde{n},\eta})^2\right)^{1/2} = F_{k,\tilde{n},\eta}.
\end{equation*}
 So $|f_j|\leq F_{j,\tilde{n},\eta}$ hold simultaneously for $2\leq j \leq k$ with probability at least $1-\frac{k-L-1}{\tilde{n}}\eta$. By induction, $|f_k|\leq F_{k,\tilde{n},\eta}$ holds for $2\leq k\leq n$ with probability at 
 least $1-\eta$. 
 \end{proof}
 
Sequential summation has $L=1$ nodes
both of whose children are leaves, while pairwise summation has $L=\lfloor n/2\rfloor$. 
 
Finally we are ready for setting up a martingale on a computational tree,
where a second probability parameter $\delta$ controls the first-order terms in $e_n$.

 \begin{theorem}\label{thm:model2Theorem}
Let  $0<\eta < 1$; $0 < \delta < 1-\eta$;
 and $F_{j,\tilde{n},\eta}$ defined as in \eqref{eqn:FBoundRecurrence}.
  Then under the model (\ref{model:second}), with probability at least $1-(\delta+\eta)$, the error in Algorithm \ref{alg:sum} is bounded by
  \begin{equation}\label{eqn:model2errBound}
      |e_n| \leq u\sqrt{2\ln(2/\delta)}\left(\sum_{j=2}^n(|s_j|+F_{j,\tilde{n},\eta})^2\right)^{1/2}.
  \end{equation}
 \end{theorem}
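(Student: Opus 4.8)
The plan is to build a martingale directly from the second explicit error expression in Lemma~\ref{lemma:forwardErrorRec}, namely $e_n=\sum_{j=2}^n(s_j+f_j)\delta_j$, and apply the relaxed Azuma-Hoeffding inequality (Lemma~\ref{l_azuma1}). The key point is that this sum, read in the topological order $j=2,\ldots,n$, is a martingale: set $Z_1\equiv 0$ and $Z_i\equiv\sum_{j=2}^i(s_j+f_j)\delta_j$ for $2\le i\le n$. Each $s_j$ is a deterministic constant, and each child-error $f_j$, by its recurrence \eqref{eqn:frecurrence}, is a function of $\delta_\ell$ with $\ell\prec j$, hence of $\delta_2,\ldots,\delta_{j-1}$; since the algorithm imposes a topological ordering, $(s_j+f_j)$ is measurable with respect to $\delta_1,\ldots,\delta_{j-1}$. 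Combined with the mean-independence hypothesis \eqref{model:second}, $\E[(s_j+f_j)\delta_j\mid\delta_1,\ldots,\delta_{j-1}]=(s_j+f_j)\E[\delta_j\mid\delta_1,\ldots,\delta_{j-1}]=0$, so $\E[Z_i\mid\delta_1,\ldots,\delta_{i-1}]=Z_{i-1}$, and $Z_2,\ldots,Z_n$ is a martingale with respect to $\delta_2,\ldots,\delta_n$ (integrability is immediate since all $\delta_j$ and $f_j$ are bounded).

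Next I would produce the per-step increment bounds. We have $|Z_i-Z_{i-1}|=|s_i+f_i||\delta_i|\le u(|s_i|+|f_i|)$ using the rounding model \eqref{model:classical}. Invoking Lemma~\ref{lemma:fBound}, the bounds $|f_i|\le F_{i,\tilde n,\eta}$ for $2\le i\le n$ hold \emph{simultaneously} with probability at least $1-\eta$, and on that event $|Z_i-Z_{i-1}|\le c_i$ with $c_i\equiv u(|s_i|+F_{i,\tilde n,\eta})$. Thus the hypotheses of the relaxed Azuma-Hoeffding inequality are met with failure parameter $\eta$ for the increment bounds. Applying Lemma~\ref{l_azuma1} with the given $\delta$, we get that with probability at least $1-(\delta+\eta)$,
\[
|e_n|=|Z_n-Z_1|\le\left(\sum_{j=2}^n c_j^2\right)^{1/2}\sqrt{2\ln(2/\delta)}=u\sqrt{2\ln(2/\delta)}\left(\sum_{j=2}^n(|s_j|+F_{j,\tilde n,\eta})^2\right)^{1/2},
\]
which is exactly \eqref{eqn:model2errBound}.

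The main obstacle — and the reason this is more delicate than a one-line argument — is the interaction between the two probabilistic events. The increment bounds $|Z_i-Z_{i-1}|\le c_i$ depend on the random event $\{|f_i|\le F_{i,\tilde n,\eta}\ \forall i\}$ from Lemma~\ref{lemma:fBound}, yet the $c_i$ themselves must be \emph{deterministic constants} for Azuma-Hoeffding to apply. The resolution is precisely the structure of Lemma~\ref{l_azuma1}: it permits the increment bounds to fail on an event of probability at most $\eta$, as long as the $c_i$ are fixed in advance — which they are, since $F_{i,\tilde n,\eta}$ is defined by the deterministic recurrence \eqref{eqn:FBoundRecurrence} independent of the realized $\delta_j$. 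One must also check the parameter constraint: $\delta+\eta<1$ is needed for the conclusion to be nonvacuous, which is guaranteed by the hypothesis $0<\delta<1-\eta$. A secondary, more routine point to verify carefully is that the martingale property does not require $i\prec n$-type indicator bookkeeping here, unlike in the proof of Lemma~\ref{lemma:fBound}: since $e_n$ is the root, \emph{every} node $2,\ldots,n-1$ satisfies $j\prec n$, so the sum $\sum_{j=2}^n(s_j+f_j)\delta_j$ is already contiguous and no truncation is needed.
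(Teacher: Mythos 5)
Your proposal is correct and follows essentially the same route as the paper: express $e_n$ via Lemma~\ref{lemma:forwardErrorRec}, view the partial sums $Z_i=\sum_{j=2}^i(s_j+f_j)\delta_j$ as a martingale, bound the increments simultaneously with probability at least $1-\eta$ via Lemma~\ref{lemma:fBound}, and conclude with the relaxed Azuma-Hoeffding inequality (Lemma~\ref{l_azuma1}). Your added checks (measurability of $s_j+f_j$ from the topological ordering, determinism of the constants $F_{j,\tilde n,\eta}$, and the fact that every node is a descendant of the root so no indicator truncation is needed) are sound elaborations of details the paper leaves implicit.
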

 \begin{proof}
 Write the error as in \eqref{eqn:forwardErrorRec}, 
    \[e_n = \sum_{j=2}^n(s_j+f_j)\delta_j.\]
    With $\delta_1=0$, the sequence $Z_1\equiv 0$, $Z_i\equiv \sum_{j=2}^i(s_j+f_j)\delta_j$, $2\leq i \leq n$, is a martingale with respect to $\delta_1,\ldots,\delta_n$. 
 Lemma~\ref{lemma:fBound} implies that with probability at least~$1-\eta$, the bounds $|f_j|\leq F_{j,\tilde{n},\eta}$ hold simultaneously for $2\leq j \leq n$. Thus with probability at least $1-\eta$, 
 the martingale differences are bounded by
    \[|Z_i - Z_{i-1}| = |(s_i+f_i)\delta_i| \leq u(|s_i|+F_{i,\tilde{n},\eta}),\qquad 
   2\leq i\leq n.\] 
   At last, Lemma \ref{l_azuma1} implies that 
   \eqref{eqn:model2errBound} holds with
 probability at least $1-(\delta+\eta)$.  \end{proof}
 
Below is a simpler bound that holds for every summation algorithm and
does not require knowledge of the number $L$
of nodes with two leaf children. Its first-order version  
illustrates the absence of $\eta$ from the first-order error term. 
 
\begin{mycorollary}\label{c_28}
Let $0<\eta < 1$ and $0 < \delta < 1-\eta$. Then under (\ref{model:second}), with probability at least $1-(\delta+\eta)$, the error in Algorithm \ref{alg:sum} is bounded by
\begin{align*}
      |e_n| &\leq u\sqrt{2\ln(2/\delta)}\left(\sum_{j=2}^n(|s_j|+F_{j,n,\eta})^2\right)^{1/2}\\
    &= u\sqrt{2\ln(2/\delta)}\sqrt{\sum_{k=2}^ns_k^2} + \mathcal{O}(u^2),
    \end{align*}
    where
$F_{2,n,\eta}\equiv 0$ and $F_{k,n,\eta}\equiv\lambda_{n,\eta}u\left(\sum_{j\prec k}(|s_j|+F_{j,n,\eta})^2\right)^{1/2}$, $3\leq k\leq n$.
\end{mycorollary}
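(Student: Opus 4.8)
The plan is to obtain Corollary~\ref{c_28} as a direct specialization of Theorem~\ref{thm:model2Theorem}, by making the conservative choice of treating \emph{every} one of the $n-1$ summation nodes as if it had a non-leaf child. Concretely, in Lemma~\ref{lemma:fBound} one is free to pick an upper bound on $\tilde n$ rather than its exact value: since $\tilde n = n-L-1 \le n$, replacing $\tilde n$ by $n$ only enlarges $\lambda_{\tilde n,\eta} = \sqrt{2\ln(2\tilde n/\eta)}$, so the child-error bounds $|f_k|\le F_{k,n,\eta}$ still hold simultaneously with probability at least $1-\eta$ (the martingale/Azuma argument in the proof of Lemma~\ref{lemma:fBound} goes through verbatim with $\tilde n$ replaced by $n$ throughout, splitting the failure probability into $n-1$ equal pieces of size $\eta/n$ instead of $\tilde n$ pieces). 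Setting additionally $F_{2,n,\eta}=0$ (which is consistent, since node $2$ always has two leaf children) and unrolling the recurrence gives exactly the $F_{k,n,\eta}$ in the statement.

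With these $F_{k,n,\eta}$ in hand, I would invoke Theorem~\ref{thm:model2Theorem} with $\tilde n$ upgraded to $n$: the same martingale $Z_i = \sum_{j=2}^i (s_j+f_j)\delta_j$, the same bound $|Z_i-Z_{i-1}| \le u(|s_i|+F_{i,n,\eta})$ holding with probability $\ge 1-\eta$, and the same application of the relaxed Azuma--Hoeffding inequality (Lemma~\ref{l_azuma1}) yield
\[
|e_n| \le u\sqrt{2\ln(2/\delta)}\left(\sum_{j=2}^n (|s_j|+F_{j,n,\eta})^2\right)^{1/2}
\]
with probability at least $1-(\delta+\eta)$, which is the first displayed inequality.

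The second line is just the first-order expansion of the right-hand side. Here I would note that each $F_{k,n,\eta}$ is $\mathcal{O}(u)$ by its defining recurrence (it is $u$ times a bounded quantity built from the $|s_j|$ and the already-$\mathcal{O}(u)$ child-error bounds), so $(|s_j|+F_{j,n,\eta})^2 = s_j^2 + \mathcal{O}(u)$ and hence $\sum_{j=2}^n(|s_j|+F_{j,n,\eta})^2 = \sum_{k=2}^n s_k^2 + \mathcal{O}(u)$; taking the square root and multiplying by the prefactor $u\sqrt{2\ln(2/\delta)}$ turns the $\mathcal{O}(u)$ correction inside the root into an $\mathcal{O}(u^2)$ term outside, giving $u\sqrt{2\ln(2/\delta)}\sqrt{\sum_{k=2}^n s_k^2} + \mathcal{O}(u^2)$. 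This makes visible the claimed point that $\eta$ (which enters only through $\lambda_{n,\eta}$ inside the $F$'s, hence only at order $u^2$) is absent from the first-order term.

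The only real subtlety — and the "obstacle," though it is a mild one — is making sure the probability bookkeeping is airtight after inflating $\tilde n$ to $n$: one must check that the relaxed Azuma inequality in the proof of Lemma~\ref{lemma:fBound} still splits cleanly into $n-1$ failure events of probability $\eta/n$ each (rather than $\tilde n$ events), and that the induction hypothesis "the bounds for $2\le j\le k-1$ hold with probability $\ge 1-\frac{k-L-1}{n}\eta$" closes with the corrected denominator. Since the argument is structurally identical and only the normalizing constant changes, this is routine; no new idea is needed beyond observing that any valid upper bound on $\tilde n$ may be substituted for $\tilde n$ in Lemma~\ref{lemma:fBound}.
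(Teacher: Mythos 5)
Your proposal is correct and follows essentially the same route the paper intends: Corollary~\ref{c_28} is just Theorem~\ref{thm:model2Theorem} with $\tilde n$ inflated to $n$ (equivalently, $\lambda_{\tilde n,\eta}\le\lambda_{n,\eta}$ only enlarges the child-error bounds, and the recurrence automatically gives $F_{k,n,\eta}=0$ at nodes with two leaf children since the sum over $j\prec k$ is empty), and the second line is the first-order truncation since each $F_{j,n,\eta}=\mathcal{O}(u)$. For that last step, the 2-norm triangle inequality $\bigl(\sum_{j}(|s_j|+F_{j,n,\eta})^2\bigr)^{1/2}\le\bigl(\sum_j s_j^2\bigr)^{1/2}+\bigl(\sum_j F_{j,n,\eta}^2\bigr)^{1/2}$ is the cleanest justification (avoiding any worry about taking a square root of a small quantity), but your bookkeeping and conclusion are sound.
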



\begin{myremark}
We present the following novel approach for proving Theorem~\ref{thm:model2Analysis}.
\begin{enumerate}
\item Write the forward errors $e_k$ in terms of child-errors $f_k$
(see  Lemma~\ref{lemma:forwardErrorRec}).
\item Express each $f_k$ as a martingale in terms of the preceding child-errors, and repeatedly use the Azuma-Hoeffding
inequality in Lemma~\ref{l_azuma}
to bound all of them simultaneously with probability at least $1-\eta$
(see Lemma~\ref{lemma:fBound}).
\item Express the error $e_n$ as a martingale whose bounds depend on the
$f_k$ bounds, and then  derive a bound for 
$|e_n|$ that holds with probability at least $1-(\eta+\delta)$
(see Theorem~\ref{thm:model2Theorem}).
\item Simplify the bound through repeated applications of the triangle inequality.
\end{enumerate}
\end{myremark}

\begin{theorem}\label{thm:model2Analysis}
Let $0<\eta < 1$; $0 < \delta < 1-\eta$, and
$\tilde{n}$ the number of nodes with two non-leaf children.
Then under the model (\ref{model:second}), with probability at least $1-(\delta+\eta)$, the error in Algorithm \ref{alg:sum} is bounded by
\begin{align*}
    |e_n| &\leq u\sqrt{2\ln(2/\delta)}\left(1+\phi_{\tilde{n},h,\eta}\right)\sqrt{\sum_{k=2}^ns_k^2}\\
    &\leq u \sqrt{h}\sqrt{2\ln(2/\delta)}\left(1+\phi_{\tilde{n},h,\eta}\right)\sum_{k=1}^n|x_k|,
\end{align*}
where $h$ is the height of the computational tree and 
\begin{equation}\label{def:phi}
    \phi_{\tilde{n},h,\eta} \equiv \lambda_{\tilde{n},\eta}\sqrt{2h}\,u\,\exp\left(\lambda_{\tilde{n},\eta}^2hu^2\right)\qquad 
\text{with}\qquad    
\lambda_{\tilde{n},\eta} \equiv \sqrt{2\ln(2\tilde{n}/\eta)}.
\end{equation}
\end{theorem}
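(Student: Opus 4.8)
The plan is to start from Theorem~\ref{thm:model2Theorem}, which already gives
\[
|e_n| \leq u\sqrt{2\ln(2/\delta)}\left(\sum_{j=2}^n(|s_j|+F_{j,\tilde{n},\eta})^2\right)^{1/2}
\]
with probability at least $1-(\delta+\eta)$, and to reduce the quantity $\bigl(\sum_j(|s_j|+F_{j,\tilde n,\eta})^2\bigr)^{1/2}$ to $(1+\phi_{\tilde n,h,\eta})\bigl(\sum_k s_k^2\bigr)^{1/2}$ by bounding the perturbation introduced by the child-errors $F_{j,\tilde n,\eta}$. By the triangle inequality in $\ell^2$,
\[
\left(\sum_{j=2}^n(|s_j|+F_{j,\tilde n,\eta})^2\right)^{1/2}
\leq \left(\sum_{j=2}^n s_j^2\right)^{1/2} + \left(\sum_{j=2}^n F_{j,\tilde n,\eta}^2\right)^{1/2},
\]
so it suffices to show $\bigl(\sum_j F_{j,\tilde n,\eta}^2\bigr)^{1/2} \leq \phi_{\tilde n,h,\eta}\bigl(\sum_k s_k^2\bigr)^{1/2}$, i.e.\ that the child-error contribution is a genuinely higher-order ($\mathcal{O}(u^2)$, since $\lambda_{\tilde n,\eta}\sqrt{2h}\,u$ carries one factor of $u$ and $F$ carries another) correction.

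The core of the argument is a layered/inductive estimate on the recurrence \eqref{eqn:FBoundRecurrence}. The plan is to define, for each height level $\ell=1,\dots,h$, the partial quantity $G_\ell \equiv \sum_{j:\,\mathrm{height}(j)\le \ell}\bigl(|s_j|+F_{j,\tilde n,\eta}\bigr)^2$ (or a similar level-indexed sum), observe that $G_0 \le \sum_k s_k^2$ since leaves contribute the $|s_j|$ terms with $F=0$, and then show a one-step growth bound of the form $G_{\ell} \le (1+c\,u^2)\,G_{\ell-1}$-type inequality, where $c$ involves $\lambda_{\tilde n,\eta}^2$. Concretely, for a node $k$ at height $\ell$, $F_{k,\tilde n,\eta}^2 = \lambda_{\tilde n,\eta}^2 u^2\sum_{j\prec k}(|s_j|+F_{j,\tilde n,\eta})^2$, and since each node $j$ has at most a bounded number of ancestors at each level (in fact each $j$ lies below at most one node at any given height along its path to the root, but here $j\prec k$ ranges over all descendants, so I must instead sum over $k$ at a fixed level and use that the descendant sets of distinct nodes at the same level are disjoint). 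Summing $F_{k,\tilde n,\eta}^2$ over all $k$ at height $\ell$ thus gives $\lambda_{\tilde n,\eta}^2 u^2$ times a sum of $(|s_j|+F_{j,\tilde n,\eta})^2$ over nodes $j$ of height $<\ell$, each counted once — i.e.\ at most $\lambda_{\tilde n,\eta}^2 u^2\, G_{\ell-1}$. Iterating, the total $\sum_j F_{j,\tilde n,\eta}^2$ over all non-leaf levels is bounded by something like $\sum_{\ell=1}^{h}\lambda_{\tilde n,\eta}^2 u^2 (1+\lambda_{\tilde n,\eta}^2u^2)^{\ell-1}\bigl(\sum_k s_k^2\bigr)$, and using $(1+t)^{\ell}\le e^{\ell t}$ this collapses to roughly $h\,\lambda_{\tilde n,\eta}^2 u^2 \exp(h\lambda_{\tilde n,\eta}^2u^2)\bigl(\sum_k s_k^2\bigr)$, whose square root is exactly $\phi_{\tilde n,h,\eta}\bigl(\sum_k s_k^2\bigr)^{1/2}$ up to the constant $\sqrt{2}$ that appears in \eqref{def:phi}. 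Finally, the second displayed inequality in the theorem follows immediately from the first by applying the second bound of Lemma~\ref{l_rel}, $\sqrt{\sum_k s_k^2}\le \sqrt h\,\sum_k|x_k|$.

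The main obstacle I anticipate is the bookkeeping in the layered sum: one must be careful that when summing $F_{k,\tilde n,\eta}^2$ over all nodes $k$ at a fixed height $\ell$, the descendant index sets $\{j:j\prec k\}$ are pairwise disjoint (true, since distinct nodes at the same height have disjoint subtrees), so that each term $(|s_j|+F_{j,\tilde n,\eta})^2$ is collected at most once per level — and then that the recursion in $\ell$ genuinely produces the geometric factor rather than something growing like $h!$. A secondary subtlety is matching the exact constants ($\sqrt{2h}$ versus $\sqrt h$, the placement of the $\exp$ factor) to the definition \eqref{def:phi}; I expect the cleanest route is to prove the slightly loose bound $\sum_j F_{j,\tilde n,\eta}^2 \le 2h\,\lambda_{\tilde n,\eta}^2u^2\exp(2\lambda_{\tilde n,\eta}^2hu^2)\sum_k s_k^2$ directly by induction on the height, absorbing the factor $2$ into $\phi$, rather than tracking a tight per-level constant. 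Throughout, the probability statement is inherited verbatim from Theorem~\ref{thm:model2Theorem} — no new randomness enters in this deterministic post-processing — so the failure probability stays $\delta+\eta$.
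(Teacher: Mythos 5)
Your proposal follows the paper's overall skeleton (start from Theorem~\ref{thm:model2Theorem}, split off the child-error contribution by the $\ell^2$ triangle inequality, then pass to inputs via Lemma~\ref{l_rel}), but it resolves the recurrence \eqref{eqn:FBoundRecurrence} by a genuinely different mechanism. The paper repeatedly unravels the recurrence along chains of ancestors, notes that each partial sum appears at most $\binom{h}{j}$ times after $j$ levels of unraveling, and then compresses the resulting factor $\sum_{j=0}^h\lambda_{\tilde n,\eta}^ju^j\sqrt{\binom{h}{j}}$ into $1+\phi_{\tilde n,h,\eta}$ via a Cauchy--Schwarz inequality with weights $2^j$, giving $\sqrt{(1+2\lambda_{\tilde n,\eta}^2u^2)^h-1}\le\phi_{\tilde n,h,\eta}$. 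You instead stratify by node height and use that distinct nodes of equal height have disjoint descendant sets; iterating the resulting per-level estimate produces the same quantity $(1+2\lambda_{\tilde n,\eta}^2u^2)^h-1\le 2\lambda_{\tilde n,\eta}^2hu^2\exp(2\lambda_{\tilde n,\eta}^2hu^2)=\phi_{\tilde n,h,\eta}^2$. Both routes are valid; yours trades the chain counting and weighted Cauchy--Schwarz for an induction on height, and, contrary to your worry, no loosening of $\phi_{\tilde n,h,\eta}$ is needed -- the constants come out exactly.

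One step must be repaired as literally written: with $\lambda\equiv\lambda_{\tilde n,\eta}$, the inequality $G_\ell\le(1+c u^2)G_{\ell-1}$ is false, because level-$\ell$ nodes inject brand-new $s_k^2$ terms into $G_\ell$ (sequential summation already shows this). The correct bookkeeping runs the recursion on $B_\ell\equiv\sum_{\mathrm{height}(j)\le\ell}F_{j,\tilde n,\eta}^2$: your disjointness argument gives $B_\ell\le B_{\ell-1}+\lambda^2u^2G_{\ell-1}$, the cross terms in $(|s_j|+F_{j,\tilde n,\eta})^2$ are handled by $G_{\ell-1}\le 2\sum_k s_k^2+2B_{\ell-1}$, and hence $B_\ell\le(1+2\lambda^2u^2)B_{\ell-1}+2\lambda^2u^2\sum_k s_k^2$, so that $B_h\le\bigl[(1+2\lambda^2u^2)^h-1\bigr]\sum_k s_k^2\le\phi_{\tilde n,h,\eta}^2\sum_k s_k^2$ -- precisely the bound you targeted; this is where the factor $2$ you proposed to ``absorb'' actually enters. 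Two minor touch-ups: the base case is the height-one nodes with two leaf children (where $F=0$), not the leaves themselves, which carry no $s_j$ or $F_j$; and, as you say, the probability $1-(\delta+\eta)$ and the final inequality in terms of $\sum_k|x_k|$ are inherited verbatim from Theorem~\ref{thm:model2Theorem} and Lemma~\ref{l_rel}, exactly as in the paper.
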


\begin{proof}
Apply the 2-norm triangle inequality to the sum in Theorem~\ref{thm:model2Theorem},
    \[\left(\sum_{j_1=2}^n(|s_{j_1}|+F_{j_1,\tilde{n},\eta})^2\right)^{1/2} \leq \sqrt{\sum_{k=2}^ns_k^2} + \left(\sum_{j_1\preceq n}F_{j_1,\tilde{n},\eta}^2\right)^{1/2}.\]
    Apply the recurrence for $F_{j,\tilde{n},\eta}$ from  \eqref{eqn:FBoundRecurrence}, followed by the triangle inequality, 
    \begin{align*}
            \left(\sum_{j_1\preceq n}F_{j_1,\tilde{n},\eta}^2\right)^{1/2}
    &=
    \left(\sum_{j_1\preceq n}\sum_{j_2\prec j_1}\lambda_{\tilde{n},\eta}^2u^2(|s_{j_2}|+F_{j_2,\tilde{n},\eta})^2\right)^{1/2} \\
    &\leq \lambda_{\tilde{n},\eta}u \sqrt{\sum_{j_2\prec j_1\preceq n}s_{j_2}^2} + \lambda_{\tilde{n},\eta}u\left(\sum_{j_2\prec j_1\preceq n} F_{j_2,\tilde{n},\eta}^2 \right)^{1/2}\\
    &\leq \lambda_{\tilde{n},\eta}u \sqrt{\binom{h}{1}}\sqrt{\sum_{k=2}^ns_k^2} + \lambda_{\tilde{n},\eta}u\left(\sum_{j_2\prec j_1\preceq n} F_{j_2,\tilde{n},\eta}^2 \right)^{1/2},
    \end{align*}
    where the final inequality follows from the fact that for each 
    index $j_2$, there are at most $h$ possibilities for the index $j_1$, 
    thus each partial sum~$s_k$ appears at most $h$ times. Repeating this and combining the result with Theorem \ref{thm:model2Theorem} 
    shows that with probability at least $1-(\delta + \eta)$ the error
    is bounded by 
    \begin{equation}\label{eqn:model2binomials}
                |e_n| \leq u\sqrt{2\ln(2/\delta)}\left(\sum_{j=0}^h\lambda_{\tilde{n},\eta}^ju^j\sqrt{\binom{h}{j}}\right)\sqrt{\sum_{k=2}^ns_k^2}.
    \end{equation}
    Next, we bound the sum by a simpler expression. Set $\gamma_j \equiv 2^j$ for $1\leq j \leq h$. The Cauchy-Schwarz inequality implies that
    \begin{equation}\label{eqn:cauchySchwarz}
            \left(\sum_{j=1}^h x_j \right)^2  = \left(\sum_{j=1}^h\frac{1}{\sqrt{\gamma_j}}\cdot \sqrt{\gamma_j} x_j\right)^2 \leq  \left(\sum_{j=1}^h \frac{1}{\gamma_j}\right)\left(\sum_{j=1}^h\gamma_j x_j^2\right) \leq \sum_{j=1}^h \gamma_j x_j^2. 
    \end{equation}
Thus, 
\begin{align*}
    \sum_{j=1}^h\lambda_{\tilde{n},\eta}^ju^j\sqrt{\binom{h}{j}} 
    &\leq \left(\sum_{j=1}^h 2^j\lambda_{\tilde{n},\eta}^{2j}u^{2j}\binom{h}{j} \right)^{1/2}
    =  \sqrt{(1+2\lambda_{\tilde{n},\eta}^2u^2)^h - 1}\\
    &\leq \sqrt{\exp\left(2\lambda_{\tilde{n},\eta}^2hu^2\right)-1}\\
    &\leq \sqrt{2\lambda_{\tilde{n},\eta}^2hu^2\exp\left(2\lambda_{\tilde{n},\eta}^2hu^2\right)}
    = \phi_{\tilde{n},h,\eta}. 
\end{align*}
Substituting this bound into \eqref{eqn:model2binomials} gives the desired result. 
\end{proof}

Theorem~\ref{thm:model2Analysis}  implies that with high probability 
the summation error to first order is proportional to $\sqrt{h}$, where 
$h$ is the height of the computational tree. This confirms that even
under the probabilistic model, summation algorithms based on 
shallow computational trees are likely to be more accurate.

\begin{myremark}
The quantity $\phi_{\tilde{n},h,\eta}$ appears only in second and higher order terms 
of the error, and might possibly become significant only if the computational tree is deep enough so that $\lambda_{\tilde{n},\eta}\sqrt{2h}u\approx 1$.
However, the effect of $\eta$ on the overall bound, even under adverse
circumstances, is negligible. 

Consider single precision computation where $u=2^{-24}\approx 5.96\cdot 10^{-8}$. Assume an extreme problem size $n=10^{10}$ with a computational tree of maximal height $h=n$, and a tremendously strict probability $\eta=10^{-32}$. Then 
    $\lambda_{\tilde{n},\eta}\approx 14.0$, and $\exp\left(\lambda_{\tilde{n},\eta}^2hu^2\right)=1$ to three digits,
    so that  the total contribution of the higher order terms is 
    merely a factor of $1+\phi_{\tilde{n},h,\eta}<1.12$.
   \end{myremark}
    
In the special case of sequential summation, 
the first bound in Theorem~\ref{thm:model2Analysis} is stronger than \cite[Theorem 2.4]{higham2020sharper}, while the second bound shows agreement to first order.

For completeness, we present
a simpler bound that holds for all summation algorithms
and does not require knowledge of the number of nodes $L$ with two 
leaf children.

\begin{mycorollary}\label{c_210}
Let $0<\eta < 1$; $0 < \delta < 1-\eta$.
Then under the model \eqref{model:second}, with probability at least $1-(\delta+\eta)$, the error in Algorithm \ref{alg:sum} is bounded by
\begin{align*}
    |e_n| &\leq u\sqrt{2\ln(2/\delta)}\left(1+\phi_{n,h,\eta}\right)\sqrt{\sum_{k=2}^ns_k^2}\\
    &\leq u \sqrt{h}\sqrt{2\ln(2/\delta)}\left(1+\phi_{n,h,\eta}\right)\sum_{k=1}^n|x_k|,
\end{align*}
where $h$ is the height of the computational tree and 
\begin{equation}\label{def:phi1}
    \phi_{n,h,\eta} \equiv \lambda_{n,\eta}\sqrt{2h}\,u\,\exp\left(\lambda_{n,\eta}^2hu^2\right)\qquad 
\text{with}\qquad    
\lambda_{n,\eta} \equiv \sqrt{2\ln(2n/\eta)}.
\end{equation}
\end{mycorollary}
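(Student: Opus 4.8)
The plan is to obtain Corollary~\ref{c_210} from Theorem~\ref{thm:model2Analysis} by a pure monotonicity argument that only uses $\tilde n\le n$, so that no new probabilistic work is needed. First I would record that, since $\tilde n\le n$ and $\ln$ is increasing, the two definitions in \eqref{def:phi} and \eqref{def:phi1} give $\lambda_{\tilde n,\eta}=\sqrt{2\ln(2\tilde n/\eta)}\le \sqrt{2\ln(2n/\eta)}=\lambda_{n,\eta}$. Because $h,u>0$, both factors $\lambda\sqrt{2h}\,u$ and $\exp(\lambda^2 h u^2)$ are increasing in $\lambda$, hence $\phi_{\tilde n,h,\eta}\le\phi_{n,h,\eta}$.

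Next I would show $F_{j,\tilde n,\eta}\le F_{j,n,\eta}$ for every node $j$, by induction up the computational tree. For the base case, any node both of whose children are leaves has no sum-node among its descendants, so the sum $\sum_{j\prec k}$ in \eqref{eqn:FBoundRecurrence} is empty and both quantities equal $0$; this also reconciles the two-case definition of $F_{k,\tilde n,\eta}$ in Lemma~\ref{lemma:fBound} with the single recurrence in Corollary~\ref{c_210}, where it is written for all $k\ge 3$. For the inductive step, the map $F_k= \lambda u\,(\sum_{j\prec k}(|s_j|+F_j)^2)^{1/2}$ is nondecreasing in the scalar $\lambda$ and, coordinatewise, in the child quantities $F_j$; combining this with $\lambda_{\tilde n,\eta}\le\lambda_{n,\eta}$ and the induction hypothesis yields $F_{k,\tilde n,\eta}\le F_{k,n,\eta}$.

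Finally I would invoke Theorem~\ref{thm:model2Analysis}: on the event of probability at least $1-(\delta+\eta)$ on which its bound holds, and using the two inequalities just established,
\[
|e_n|\le u\sqrt{2\ln(2/\delta)}\bigl(1+\phi_{\tilde n,h,\eta}\bigr)\sqrt{\textstyle\sum_{k=2}^n s_k^2}\le u\sqrt{2\ln(2/\delta)}\bigl(1+\phi_{n,h,\eta}\bigr)\sqrt{\textstyle\sum_{k=2}^n s_k^2},
\]
which is the first asserted bound; the second then follows from the partial-sums-to-inputs estimate $\sqrt{\sum_{k=2}^n s_k^2}\le \sqrt h\,\sum_{k=1}^n|x_k|$ of Lemma~\ref{l_rel}, exactly as at the end of the proof of Theorem~\ref{thm:model2Analysis}.

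I do not expect a serious obstacle here; the only delicate point is the bookkeeping in the base case noted above (making sure the single recurrence of the corollary genuinely reproduces the $F\equiv0$ values at leaf-children nodes). If one preferred instead to re-derive the statement from scratch rather than post-process Theorem~\ref{thm:model2Analysis}, one would additionally have to check that replacing the per-node failure probability $\eta/\tilde n$ by $\eta/n$ in Lemma~\ref{lemma:fBound} still keeps the union bound over the at most $\tilde n\le n$ nontrivial nodes below $\eta$; the post-processing route avoids even that, so I would take it.
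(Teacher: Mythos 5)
Your proposal is correct and is essentially the paper's (implicit) route: Corollary~\ref{c_210} is just the $\tilde n\to n$ weakening of Theorem~\ref{thm:model2Analysis}, which follows from $\lambda_{\tilde n,\eta}\le\lambda_{n,\eta}$ and hence $\phi_{\tilde n,h,\eta}\le\phi_{n,h,\eta}$, with the second inequality supplied by Lemma~\ref{l_rel} exactly as you say. The only remark is that your induction establishing $F_{j,\tilde n,\eta}\le F_{j,n,\eta}$ is superfluous for this corollary, since its bound involves only $\phi_{n,h,\eta}$ and the partial sums (that comparison is relevant to Corollary~\ref{c_28}, not here).
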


\section{Shifted summation}\label{s_center}
We present an algorithm for shifted
summation 
(Algorithm~\ref{alg:shiftSum}) which
centers the inputs $x_j$, and derive 
a probabilistic error bound (Theorem~\ref{thm:shifted}). 

Shifted summation is motivated by 
work  in computer architecture \cite{CDRS21,DSC19} and formal methods
for program verification \cite{Lohar19}
where not only the roundoffs  but also the inputs are interpreted as 
random variables sampled from 
 some distribution. Then one can compute statistics for
 the total roundoff error and estimate the probability that it is bounded by $tu$
 for a given $t$.
 
Probabilistic bounds for random inputs are derived in 
\cite{higham2020sharper}, with improvements in \cite{hallman2021refined},
to show that sequential summation is accurate for inputs 
$x_j$ that are tightly clustered around zero.
As a consequence, accuracy can be improved 
by shifting the inputs to have zero mean, which is affordable in the context 
of matrix multiplication \cite[Section 4]{higham2020sharper}.

\begin{algorithm}\caption{Shifted General Summation}\label{alg:shiftSum}
\begin{algorithmic}[1]
\REQUIRE Floating point numbers $x_1,\ldots,x_n$; shift $c$
\ENSURE $s_n = \sum_{k=1}^n{x_k}$
\FOR{$k=1:n$}
\STATE $y_k=x_k-c$
\ENDFOR
\STATE $y_{n+1}=nc$
\STATE $t_n$ = output of Algorithm \ref{alg:sum} applied
to $y_1,\ldots,y_n$
\RETURN $s_n = t_n +y_{n+1}$
\end{algorithmic}
\end{algorithm}

Our Algorithm~\ref{alg:shiftSum} extends
the shifted algorithm for sequential summation 
\cite[Algorithm 4.1]{higham2020sharper} to general summation. Figure \ref{fig:Centering} shows the computational tree for $n=2$.

The pseudo-code in
Algorithm~\ref{alg:shiftSum} is geared 
towards exposition. In practice, one shifts the $x_k$ immediately prior to the
summation, to avoid allocating additional storage for $y_k=x_k-c$.
The ideal choice for centering is the empirical mean $c=s_n/n$.
A simpler approximation is $c=(\min_k{x_k}+\max_k{x}_k)/2$.

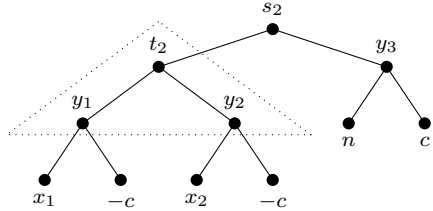
\begin{figure}
   \centering
     \begin{tikzpicture}[scale=1.0]
	\tikzstyle{every node}+=[inner sep=0pt]
	\fill (0,0) circle (0.08) node [below=5] {$x_1$}; 
	\fill (1,0) circle (0.08) node [below=5] {$-c$};  
	\fill (2,0) circle (0.08) node [below=5] {$x_2$}; 
	\fill (3,0) circle (0.08) node [below=5] {$-c$}; 
	\fill (4,0.75) circle (0.08) node [below=5] {$n$}; 
	\fill (5,0.75) circle (0.08) node [below=5] {$c$}; 
	\draw (0,0)-- (0.5,0.75) -- (1,0); 
	\draw (2,0) -- (2.5,0.75) -- (3,0); 
	\draw (0.5,0.75) -- (1.5,1.5) -- (2.5,0.75); 
	\draw (4,0.75) -- (4.5,1.5) -- (5,0.75);
	\draw (1.5,1.5) -- (3,2) -- (4.5,1.5); 
	\fill (0.5,0.75) circle (0.08) node [above=5] {$y_1$};
	\fill (2.5,0.75) circle (0.08) node [above=5] {$y_2$};
	\fill (1.5,1.5) circle (0.08) node [above=5] {$t_2$};
	\fill (4.5,1.5) circle (0.08) node [above=5] {$y_3$};  
	\fill (3,2) circle (0.08) node [above=5] {$s_2$};
	\draw[dotted] (1.5,2.1) -- (-0.5,0.6) -- (3.5,0.6) -- (1.5,2.1); 
	\end{tikzpicture}

    \caption{Computational tree for shifted summation, $n=2$. The dotted boundary delineates the inputs of and summations computed by the call to Algorithm \ref{alg:sum} in line~5  of Algorithm~\ref{alg:shiftSum}.}
    \label{fig:Centering}
\end{figure}
    
Error bounds for Algorithm \ref{alg:shiftSum} follow almost directly from the ones for Algorithm~\ref{alg:sum}. Figure \ref{fig:Centering} shows the associated computational tree for $n=2$. It has $4n+3$ vertices, and its height is equal to two plus the height of the tree in the call to Algorithm~\ref{alg:sum}. The one twist is the additional multiplication $y= nc$, but if $n$ and $c$ can be stored exactly then the error analysis remains the same.\footnote{ If $n$ does not admit an exact floating point 
representation, then we could append an additional node for the 
artificial `addition' $n+0$, which simulates the rounding of $n$.}

\begin{theorem}\label{thm:shifted}
Let $0<\eta < 1$; $0<\delta<1-\eta$. Then under the model \eqref{model:second}, with probability at least $1-(\delta+\eta)$, the error in Algorithm~\ref{alg:shiftSum} is bounded by 
\begin{align*}
    |e_n| &\leq u\sqrt{2\ln(2/\delta)}\left(1+\phi_{n,h,\eta}\right)\sqrt{s_n^2 + \sum_{k=2}^nt_k^2 + \sum_{k=1}^{n+1}y_k^2}\\
    &\leq u\sqrt{2\ln(2/\delta)}\left(1+\phi_{n,h,\eta}\right)\left(n|c| +\sqrt{h}\sum_{k=1}^n{(|x_k-c|+|x_k|)}\right),
\end{align*}
where $h$ is the height of the computational tree and $\phi_{n,h,\eta}$ is defined in \eqref{def:phi1}.
\end{theorem}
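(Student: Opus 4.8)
The plan is to reduce the error analysis of Algorithm~\ref{alg:shiftSum} to the already-established bound for Algorithm~\ref{alg:sum} in Theorem~\ref{thm:model2Analysis} (or Corollary~\ref{c_210}), by viewing the shifted algorithm as an instance of general summation on the enlarged computational tree depicted in Figure~\ref{fig:Centering}. First I would set up notation: the shifted inputs are $y_k = x_k - c$ for $1\le k\le n$ and $y_{n+1} = nc$, and the partial sums internal to the call to Algorithm~\ref{alg:sum} are $t_2,\ldots,t_n$ with $t_n$ its output; the final computed quantity is $s_n = t_n + y_{n+1}$. The subtractions $y_k = x_k - c$ each introduce one rounding error, the product $nc$ introduces one (or is exact if $n$ and $c$ are floating point, per the footnote), and the final addition $t_n + y_{n+1}$ introduces one more. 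So the whole computation is pairwise summation on a tree whose leaves are $x_1,\ldots,x_n$ and copies of $-c$ and whose internal nodes are the $y_k$, the $t_k$, and the root $s_n$.

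Next I would apply Lemma~\ref{lemma:forwardErrorRec} / Theorem~\ref{thm:model2Theorem} directly to this enlarged tree. The node-indexed bound from Theorem~\ref{thm:model2Theorem} is
\[
|e_n| \le u\sqrt{2\ln(2/\delta)}\Bigl(\textstyle\sum_{j}(|s_j^{(\text{node})}|+F_{j,\tilde n,\eta})^2\Bigr)^{1/2},
\]
summed over all internal nodes, which here are precisely $s_n$, the $t_k$ for $2\le k\le n$, and the $y_k$ for $1\le k\le n+1$. Dropping the child-error terms $F$ into the $(1+\phi_{n,h,\eta})$ factor exactly as in the passage from Theorem~\ref{thm:model2Theorem} to Theorem~\ref{thm:model2Analysis} (the triangle-inequality/Cauchy--Schwarz bookkeeping is identical, with $h$ the height of the shifted tree), I get the first displayed inequality:
\[
|e_n| \le u\sqrt{2\ln(2/\delta)}\,(1+\phi_{n,h,\eta})\sqrt{s_n^2 + \textstyle\sum_{k=2}^n t_k^2 + \sum_{k=1}^{n+1} y_k^2}.
\]
One mild subtlety is that $\phi$ depends on the height and node count of the enlarged tree rather than the inner one; but since these differ only by additive constants ($h$ increases by two, $\tilde n$ by a bounded amount), and $\phi_{n,h,\eta}$ is the coarser $n$-indexed version anyway, using $\phi_{n,h,\eta}$ with $h$ = height of the shifted tree is legitimate, matching the statement.

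For the second inequality I would bound each group of terms under the square root. Trivially $y_{n+1}^2 = (nc)^2 = n^2c^2$, and $|y_k| = |x_k - c|$ for $k\le n$. For the inner partial sums $t_k$, apply Lemma~\ref{l_rel} to the inner tree: $\sqrt{\sum_{k=2}^n t_k^2} \le \sqrt{h}\sum_{k=1}^n |y_k| = \sqrt{h}\sum_k |x_k - c|$. For the root, $s_n = \sum_k x_k$ so $|s_n| = |\sum x_k| \le \sum_k |x_k|$, and I can also write $|s_n| \le n|c| + \sqrt h \sum|x_k-c|$ if I want everything in one bucket — actually the cleanest route is to use the $2$-norm triangle inequality $\sqrt{a^2+b^2+c^2+\cdots}\le |a|+|b|+\cdots$ on the blocks, giving
\[
\sqrt{s_n^2 + \textstyle\sum t_k^2 + \sum y_k^2} \le |s_n| + \sqrt{\textstyle\sum_{k=2}^n t_k^2} + \sqrt{\textstyle\sum_{k=1}^n y_k^2} + |y_{n+1}|,
\]
then bound $|s_n|\le \sum|x_k|$, use Lemma~\ref{l_rel} twice (once for $\sum t_k^2$, and noting $\sqrt{\sum_{k=1}^n y_k^2}\le \sum|y_k|=\sum|x_k-c|$), and collect the $n|c|$ term from $y_{n+1}$ together with $|s_n|$; rearranging gives exactly $n|c| + \sqrt h\sum_k(|x_k-c| + |x_k|)$ as claimed (absorbing the non-$\sqrt h$ pieces into the $\sqrt h$ ones since $\sqrt h \ge 1$). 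The main obstacle, such as it is, is purely bookkeeping: making sure the height and node-count parameters feeding $\phi$ and $\lambda$ are the ones for the enlarged tree and that the extra leaves $-c$ and the node $y_{n+1}=nc$ are handled consistently with the footnote's convention when $n$ is not exactly representable; the probabilistic core is inherited verbatim from Theorem~\ref{thm:model2Theorem} with no new martingale argument required.
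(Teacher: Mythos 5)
Your overall strategy is the same as the paper's: treat Algorithm~\ref{alg:shiftSum} as general summation on the enlarged tree of Figure~\ref{fig:Centering}, whose internal nodes are $y_1,\ldots,y_{n+1}$, $t_2,\ldots,t_n$ and $s_n$, and inherit the probabilistic machinery from Theorem~\ref{thm:model2Theorem}/Theorem~\ref{thm:model2Analysis} with no new martingale argument; the first displayed inequality follows as you describe. One caveat: the enlarged tree has $2n+1$ internal nodes, so "the node count differs by additive constants" is not the right justification for keeping the subscript $n$ in $\lambda_{n,\eta}$ and $\phi_{n,h,\eta}$; a blind appeal to Corollary~\ref{c_210} would give $\lambda_{2n+1,\eta}$. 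What makes $\phi_{n,h,\eta}$ legitimate is the observation the paper records right after the theorem: the shifted tree has $L=n$ nodes both of whose children are leaves, hence $\tilde{n}=n$ in Theorem~\ref{thm:model2Analysis}.

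The second inequality, however, does not follow from your bookkeeping. Splitting the four blocks by the triangle inequality and applying Lemma~\ref{l_rel} to the inner tree gives at best
\begin{equation*}
n|c| \;+\; \sum_{k=1}^n|x_k| \;+\; \sqrt{h-2}\,\sum_{k=1}^n|x_k-c| \;+\; \sum_{k=1}^n|x_k-c|,
\end{equation*}
and the leftover $\sum_k|x_k-c|$ coming from $\bigl(\sum_{k\le n}y_k^2\bigr)^{1/2}$ cannot be "absorbed since $\sqrt{h}\ge 1$": the $\sqrt{h}\sum_k|x_k-c|$ budget is already used by the $t_k$ block, and $1+\sqrt{h-2}>\sqrt{h}$ for every $h\ge 3$ (square both sides). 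Since $\sum_k|x_k-c|$ can dominate $\sum_k|x_k|$ (take $|c|$ large), the expression you end up with is strictly weaker than, and does not imply, the claimed bound $n|c|+\sqrt{h}\sum_k(|x_k-c|+|x_k|)$. The fix is to handle $\{y_1,\ldots,y_n,t_2,\ldots,t_n,s_n\}$ jointly, exactly as in the proof of Lemma~\ref{l_rel}: bound the sum of squares by the maximum times the sum of absolute values. Each $|y_k|=|x_k-c|$ appears once as itself and in at most $h-2$ of the $|t_k|$, and $|s_n|\le\sum_k|x_k|$, so the sum of absolute values is at most $(h-1)\sum_k|x_k-c|+\sum_k|x_k|\le h\sum_k(|x_k-c|+|x_k|)$, while the maximum is at most $\sum_k(|x_k-c|+|x_k|)$; hence that block is bounded by $\sqrt{h}\sum_k(|x_k-c|+|x_k|)$, and splitting off only $|y_{n+1}|=n|c|$ gives exactly the stated second inequality.
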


With regard to the factor $\lambda_{n,\eta}\equiv \sqrt{2\ln(2n/\eta)}$ in $\phi_{n,h,\eta}$, the tree for Algorithm \ref{alg:shiftSum} has $L=n$ nodes both of whose children are leaves.

\section{Compensated sequential summation}\label{s_compensated}

Our approach is not restricted to algorithms whose computational graphs are trees, and we demonstrate its versatility by analyzing the forward error for compensated sequential summation (Algorithm~\ref{alg:compensated}). After deriving exact error expressions (Section~\ref{s_cexact}) and
bounds that hold to second order (Section~\ref{s_cfirst}), we derive an exact probabilistic bound  (Section~\ref{s_compProbBnd}). 

Algorithm~\ref{alg:compensated} is the formulation \cite[Theorem 8]{goldberg1991every}
of the `Kahan Summation Formula' \cite{kahan1965pracniques}. A version with opposite signs 
is presented in \cite[Algorithm 4.2]{higham2002accuracy}.

\begin{algorithm}\caption{Compensated Summation \cite[Theorem 8]{goldberg1991every} \cite[page 9-4]{Kahan73}}\label{alg:compensated}
\begin{algorithmic}[1]
\REQUIRE Floating point numbers $x_1, \ldots, x_n$
\ENSURE $s_n = \sum_{k=1}^n{x_k}$
\STATE $s_1=x_1$, $c_1=0$
\FOR{$k=2:n$}
\STATE $y_k=x_k-c_{k-1}$
\STATE $s_k=s_{k-1}+y_k$
\STATE $c_k=(s_k-s_{k-1}) -y_k$
\ENDFOR
\RETURN $s_n$
\end{algorithmic}
\end{algorithm}

Following \cite[page 9-5]{Kahan73} and additionally defining the computed terms $\widehat{z}_k$, our finite precision model of 
Algorithm~\ref{alg:compensated} is 
\begin{align}\label{e_model1}
\begin{split}
    \widehat{s}_1 &= s_1=x_1, \qquad \widehat{c}_1= 0, \qquad \eta_2=0 \\
\widehat{y}_k&= (x_k-\widehat{c}_{k-1})(1+\eta_k), \qquad 2\leq k\leq n\\
\widehat{s}_k &= (\widehat{s}_{k-1} +\widehat{y}_k)(1+\sigma_k)\\
\widehat{z}_k &= (\widehat{s}_k-\widehat{s}_{k-1})(1+\delta_k)\\
 \widehat{c}_k&= \left(\widehat{z}_k - 
 \widehat{y}_k\right)(1+\beta_k),
\end{split}
\end{align}

\subsection{Error expressions}\label{s_cexact}
Mimicking the strategy for  general summation, we derive an analogue of Lemma \ref{lemma:forwardErrorRec} for compensated summation. 
We use single dots to represent individual forward errors\footnote{The dots do not refer to differentiation!},
\begin{equation}\label{def:compFwdErr}
     \dot{y}_k \equiv \widehat{y}_k - x_k,\qquad \dot{s}_k \equiv \widehat{s}_k - s_k,\qquad \dot{z}_k \equiv \widehat{z}_k - x_k, \qquad \dot{c}_k \equiv \widehat{c}_k, 
\end{equation}
and double dots to represent child-errors,
\begin{equation}\label{def:compChiErr}
    \ddot{y}_k \equiv -\dot{c}_{k-1}, \qquad \ddot{s}_k \equiv \dot{s}_{k-1}+\dot{y}_k, \qquad \ddot{z}_k \equiv \dot{s}_k -\dot{s}_{k-1}, \qquad \ddot{c}_k \equiv \dot{z}_k - \dot{y}_k. 
\end{equation}
The relations \eqref{e_model1} imply the forward error recursions 
\begin{subequations}
\begin{align}
    \dot{y}_k &= (x_k +\ddot{y}_k)\eta_k + \ddot{y}_k, \label{eqn:comp_ydot}\\
    \dot{s}_k &= (s_k + \ddot{s}_k)\sigma_k + \ddot{s}_k, \label{eqn:comp_sdot}\\
    \dot{z}_k &= (x_k +\ddot{z}_k)\delta_k + \ddot{z}_k, \label{eqn:comp_zdot}\\
    \dot{c}_k &= \ddot{c}_k\beta_k + \ddot{c}_k. \label{eqn:comp_cdot}
\end{align}
\end{subequations}
Now we derive recurrence relations for the child-errors. 
Fortunately, the recurrences for $\ddot{y}_k$, $\ddot{z}_k$, and 
$\ddot{c}_k$ are mercifully short, with a length independent
of $k$.

\begin{theorem}\label{thm:comp_recurrences}
The child-errors in Algorithm \ref{alg:compensated} equal 
\begin{equation}\label{eqn:cbase}
    \ddot{y}_2 = 0, \qquad \ddot{s}_2 = 0, \qquad \ddot{z}_2 = s_2\sigma_2, \qquad \ddot{c}_2 = (x_2+\ddot{z}_2)\delta_2 + s_2\sigma_2, 
\end{equation}
and for $3\leq k \leq n$, 
\begin{subequations}
\begin{align}
    \ddot{y}_k &= -\ddot{c}_{k-1}(1+\beta_{k-1}), \label{eqn:yrec}\\
    \ddot{s}_k &= \sum_{j=3}^k (x_j+\ddot{y}_j)\eta_j - \ddot{c}_{j-1}\beta_{j-1} - (x_{j-1}+\ddot{z}_{j-1})\delta_{j-1}, \label{eqn:srec}\\
    \ddot{z}_k &= (s_k + \ddot{s}_k)\sigma_k + (x_k+\ddot{y}_k)\eta_k + \ddot{y}_k,\label{eqn:zrec}\\
    \ddot{c}_k &= (x_k + \ddot{z}_k)\delta_k + (s_k + \ddot{s}_k)\sigma_k. \label{eqn:crec}
\end{align}
\end{subequations}
\end{theorem}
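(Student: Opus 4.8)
The plan is to verify each of the four recurrences by direct substitution into the forward-error recursions \eqref{eqn:comp_ydot}--\eqref{eqn:comp_cdot}, using the definitions \eqref{def:compChiErr}, and to handle the only long identity---the one for $\ddot{s}_k$---by recognizing its summand as a first difference and telescoping, rather than by unrolling a recurrence. The base case $k=2$ in \eqref{eqn:cbase} is immediate: since $\widehat{c}_1=0$ we have $\ddot{y}_2=-\dot{c}_1=0$; then $\eta_2=0$ and $\ddot{y}_2=0$ force $\dot{y}_2=0$ via \eqref{eqn:comp_ydot}, and $\dot{s}_1=\widehat{s}_1-s_1=0$ gives $\ddot{s}_2=\dot{s}_1+\dot{y}_2=0$. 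Substituting $\ddot{s}_2=0$ into \eqref{eqn:comp_sdot} yields $\dot{s}_2=s_2\sigma_2$, hence $\ddot{z}_2=\dot{s}_2-\dot{s}_1=s_2\sigma_2$; then \eqref{eqn:comp_zdot} gives $\dot{z}_2=(x_2+\ddot{z}_2)\delta_2+s_2\sigma_2$ and finally $\ddot{c}_2=\dot{z}_2-\dot{y}_2=\dot{z}_2$.

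For $3\le k\le n$ the three short recurrences are one-line substitutions. Identity \eqref{eqn:yrec} is $\ddot{y}_k=-\dot{c}_{k-1}$ rewritten with \eqref{eqn:comp_cdot}. For \eqref{eqn:zrec} I would begin with $\ddot{z}_k=\dot{s}_k-\dot{s}_{k-1}$, replace $\dot{s}_k$ by \eqref{eqn:comp_sdot}, and use $\ddot{s}_k=\dot{s}_{k-1}+\dot{y}_k$ to cancel $\dot{s}_{k-1}$, obtaining the intermediate identity $\ddot{z}_k=(s_k+\ddot{s}_k)\sigma_k+\dot{y}_k$; expanding $\dot{y}_k$ via \eqref{eqn:comp_ydot} gives \eqref{eqn:zrec}. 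For \eqref{eqn:crec} I would begin with $\ddot{c}_k=\dot{z}_k-\dot{y}_k$, expand $\dot{z}_k$ by \eqref{eqn:comp_zdot}, and substitute the intermediate identity just derived, so that the two $\dot{y}_k$ contributions cancel and \eqref{eqn:crec} remains.

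The one identity requiring genuine care is \eqref{eqn:srec}, which I would prove by telescoping. Since $\ddot{s}_k=\dot{s}_{k-1}+\dot{y}_k$ for every $k\ge 2$, subtracting the analogous identity at index $k-1$ and using $\ddot{z}_{k-1}=\dot{s}_{k-1}-\dot{s}_{k-2}$ gives $\ddot{s}_k-\ddot{s}_{k-1}=\ddot{z}_{k-1}+\dot{y}_k-\dot{y}_{k-1}$. I would then rewrite $\ddot{z}_{k-1}$ via \eqref{eqn:comp_zdot} as $\dot{z}_{k-1}-(x_{k-1}+\ddot{z}_{k-1})\delta_{k-1}$, group $\dot{z}_{k-1}-\dot{y}_{k-1}=\ddot{c}_{k-1}$, expand $\dot{y}_k$ via \eqref{eqn:comp_ydot} together with $\ddot{y}_k=-\dot{c}_{k-1}$, and use \eqref{eqn:comp_cdot} in the form $\ddot{c}_{k-1}-\dot{c}_{k-1}=-\ddot{c}_{k-1}\beta_{k-1}$. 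This yields $\ddot{s}_k-\ddot{s}_{k-1}=(x_k+\ddot{y}_k)\eta_k-\ddot{c}_{k-1}\beta_{k-1}-(x_{k-1}+\ddot{z}_{k-1})\delta_{k-1}$, exactly the $j=k$ summand of \eqref{eqn:srec}; summing from $j=3$ to $k$ and using $\ddot{s}_2=0$ completes the proof. The main obstacle is purely the bookkeeping in this last step: $\dot{y}_k$, $\dot{z}_{k-1}$, $\dot{c}_{k-1}$ and their child-error counterparts all occur at once, so one must track carefully which recursion is being applied where and which terms cancel; every other step is a single application of one of \eqref{eqn:comp_ydot}--\eqref{eqn:comp_cdot}.
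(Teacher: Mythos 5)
Your proposal is correct and takes essentially the same route as the paper: each identity is verified by direct substitution of the definitions \eqref{def:compChiErr} into the error recursions \eqref{eqn:comp_ydot}--\eqref{eqn:comp_cdot}, with \eqref{eqn:srec} obtained by establishing the one-step increment and summing (the paper unravels the same recurrence, using \eqref{eqn:crec} at index $k-1$ where you use \eqref{eqn:comp_zdot} and the definition of $\ddot{c}_{k-1}$ -- an immaterial algebraic variation), and the base case $k=2$ checked separately using $\eta_2=0$.
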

\begin{proof}
    First, \eqref{eqn:yrec} follows directly from \eqref{def:compChiErr} and \eqref{eqn:comp_cdot}. Second, 
    \begin{align*}
        \ddot{c}_k &= \dot{z}_k-\dot{y}_k &&\text{by \eqref{def:compChiErr}}\\
                   &= (x_k+\ddot{z}_k)\delta_k + \ddot{z}_k - \dot{y}_k&&  \text{by \eqref{eqn:comp_zdot}}\\
                   &= (x_k + \ddot{z}_k)\delta_k + \dot{s}_k-\dot{s}_{k-1}-\dot{y}_k&& \text{by \eqref{def:compChiErr}}\\
                   &= (x_k + \ddot{z}_k)\delta_k + (s_k+\ddot{s}_k)\sigma_k + \ddot{s}_k-(\dot{s}_{k-1}+\dot{y}_k)&&\text{by \eqref{eqn:comp_sdot}}\\
                   &= (x_k + \ddot{z}_k)\delta_k + (s_k+\ddot{s}_k)\sigma_k,&&\text{by \eqref{def:compChiErr}}\\
        \intertext{which establishes \eqref{eqn:crec}. Third,}
        \ddot{s}_k &= \dot{s}_{k-1} + \dot{y}_k &&\text{by \eqref{def:compChiErr}}\\
                  &= \ddot{s}_{k-1} + (s_{k-1}+\ddot{s}_{k-1})\sigma_{k-1} + (x_k+\ddot{y}_k)\eta_k + \ddot{y}_k &&\text{by \eqref{eqn:comp_ydot}, \eqref{eqn:comp_sdot}}\\
                  &= \ddot{s}_{k-1} + (s_{k-1}+\ddot{s}_{k-1})\sigma_{k-1} + (x_k+\ddot{y}_k)\eta_k - \ddot{c}_{k-1}(1+\beta_{k-1}) &&\text{by \eqref{eqn:yrec}} \\
                  &= \ddot{s}_{k-1} + (x_k+\ddot{y}_k)\eta_k - \ddot{c}_{k-1}\beta_{k-1} - (x_{k-1}+\ddot{z}_{k-1})\delta_{k-1},&&\text{by \eqref{eqn:crec}}
    \intertext{and unraveling the recurrence yields \eqref{eqn:srec}. Finally, }
        \ddot{z}_k &= \dot{s}_k-\dot{s}_{k-1} &&\text{by \eqref{def:compChiErr}}\\
                &= (s_k+\ddot{s}_k)\sigma_k + \ddot{s}_k - \dot{s}_{k-1} &&\text{by \eqref{eqn:comp_sdot}}\\
                &= (s_k+\ddot{s}_k)\sigma_k + \dot{y}_k &&\text{by \eqref{def:compChiErr}}\\
                &= (s_k+\ddot{s}_k)\sigma_k + (x_k+\ddot{y}_k)\eta_k + \ddot{y}_k.&&\text{by \eqref{eqn:comp_ydot}}
    \end{align*}
Recalling that $\eta_2=0$, it is straightforward to check \eqref{eqn:cbase} separately. 
\end{proof}

\subsection{Second order deterministic bound}\label{s_cfirst}
We present a second-order expression (Corollary~\ref{c_cs5}) 
for the error in 
Algorithm~\ref{alg:compensated}, and discuss the discrepancy with 
several existing bounds (Remark~\ref{r_cfirst}).

The expressions below suggest that the errors in the `correction' steps
3 and~5 of Algorithm~\ref{alg:compensated} dominate the first order 
terms of the summation error.

\begin{mycorollary}\label{c_cs5}
With assumptions (\ref{e_model1}), let
$\mu_k\equiv \eta_k-\delta_k$, $2\leq k\leq n-1$,
and $\mu_n\equiv \eta_n$. Then the error in
Algorithm~\ref{alg:compensated} up to second order equals
\begin{align*}
e_n=\widehat{s}_n-s_n=  \dot{s}_n = s_n\sigma_n + (1+\sigma_n)\sum_{k=2}^n{x_k\mu_k}
         &-\sum_{k=2}^{n-1}{s_k\sigma_k(\mu_{k+1}+\beta_k+\delta_k)}\\
        &- \sum_{k=2}^{n-1}{x_k\delta_k(\mu_{k+1}+\beta_k+\eta_k)}
        + \mathcal{O}(u^3),
\end{align*}
and the computed sum equals
\begin{equation}\label{eqn:c_phibound}
    \widehat{s}_n = \sum_{k=1}^n(1+\rho_k)x_k, \qquad |\rho_k|\leq 3u + [4(n-k)+6]u^2+\mathcal{O}(u^3).
\end{equation}
 \end{mycorollary}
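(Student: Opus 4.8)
\textbf{Proof proposal for Corollary~\ref{c_cs5}.}

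The plan is to start from the exact forward-error identity $e_n=\dot{s}_n$ and expand it to second order using the recurrences of Theorem~\ref{thm:comp_recurrences}. The key structural observation is that all child-errors $\ddot{y}_k$, $\ddot{z}_k$, $\ddot{c}_k$ are themselves $\mathcal{O}(u)$ quantities (each is a sum of terms of the form $(\text{partial sum or input})\times(\text{roundoff})$ plus lower-order child-errors), so when they appear multiplied by a roundoff $\delta_k,\sigma_k,\eta_k,\beta_k$ they contribute at $\mathcal{O}(u^2)$, and products of two child-errors are $\mathcal{O}(u^3)$ and may be dropped. Concretely, from \eqref{eqn:comp_sdot} we have $\dot{s}_n = (s_n+\ddot{s}_n)\sigma_n+\ddot{s}_n = s_n\sigma_n+\ddot{s}_n+\mathcal{O}(u^3)$, since $\ddot{s}_n=\mathcal{O}(u)$ and $\ddot{s}_n\sigma_n=\mathcal{O}(u^2)$ is retained but $\ddot{s}_n$'s own second-order pieces times $\sigma_n$ are $\mathcal{O}(u^3)$. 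So the whole computation reduces to expanding $\ddot{s}_n$ to second order via \eqref{eqn:srec}.

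First I would write out \eqref{eqn:srec} with $k=n$: $\ddot{s}_n=\sum_{j=3}^n\bigl[(x_j+\ddot{y}_j)\eta_j-\ddot{c}_{j-1}\beta_{j-1}-(x_{j-1}+\ddot{z}_{j-1})\delta_{j-1}\bigr]$. To second order, $(x_j+\ddot{y}_j)\eta_j=x_j\eta_j+\ddot{y}_j\eta_j$, and $\ddot{y}_j\eta_j=-\ddot{c}_{j-1}\eta_j+\mathcal{O}(u^3)$ by \eqref{eqn:yrec}; similarly $(x_{j-1}+\ddot{z}_{j-1})\delta_{j-1}=x_{j-1}\delta_{j-1}+\ddot{z}_{j-1}\delta_{j-1}$. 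Then I substitute the \emph{first-order} values of the child-errors into these $\mathcal{O}(u^2)$ terms: from \eqref{eqn:crec}, $\ddot{c}_{j-1}=x_{j-1}\delta_{j-1}+s_{j-1}\sigma_{j-1}+\mathcal{O}(u^2)$; from \eqref{eqn:zrec}, $\ddot{z}_{j-1}=s_{j-1}\sigma_{j-1}+x_{j-1}\eta_{j-1}+\ddot{y}_{j-1}=s_{j-1}\sigma_{j-1}+x_{j-1}\eta_{j-1}+\mathcal{O}(u^2)$ (note $\ddot{y}_{j-1}=-\ddot{c}_{j-2}(1+\beta_{j-2})=\mathcal{O}(u)$, but its leading part $-x_{j-2}\delta_{j-2}-s_{j-2}\sigma_{j-2}$ would push the index bookkeeping around — I need to check carefully whether it survives or telescopes; the cleanest route is to keep $\ddot{y}_{j-1}$ symbolic, note $\ddot{y}_{j-1}\delta_{j-1}=\mathcal{O}(u^2)$ and track it, then observe it combines with the $-\ddot{c}_{j-1}\eta_j$ reindexing). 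After collecting, I group terms by which input $x_k$ or partial sum $s_k$ they multiply, introduce $\mu_k=\eta_k-\delta_k$ to merge the $x_k\eta_k$ and $x_k\delta_k$ first-order contributions (shifting the index so that $x_k$ picks up $\eta_k$ from the $j=k$ term and $-\delta_k$ from the $j-1=k$ term, which is exactly why $\mu_n=\eta_n$ has no $\delta$ partner), and read off the four sums in the displayed formula; the global factor $(1+\sigma_n)$ on $\sum x_k\mu_k$ comes from restoring the $\sigma_n$ that multiplied $\ddot{s}_n$ in $\dot{s}_n=(s_n+\ddot{s}_n)\sigma_n+\ddot{s}_n$, since the first-order part of $\ddot{s}_n$ is exactly $\sum_{k=2}^n x_k\mu_k$ (to first order $e_n=\sum x_k\mu_k$, matching the known compensated-summation first-order result).

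For the second claim \eqref{eqn:c_phibound}, I would bound $|e_n|$ termwise using $|\sigma_k|,|\delta_k|,|\eta_k|,|\beta_k|\le u$ and $|s_n\sigma_n|\le u|s_n|$, but to get the stated form $\widehat{s}_n=\sum(1+\rho_k)x_k$ I instead go back to the exact (untruncated) recurrences and track, for each $x_k$, the product of $(1+\cdot)$ factors it accumulates: $x_k$ enters $\widehat{y}_k$ with one factor $(1+\eta_k)$, then propagates through $\widehat{s}_k,\widehat{s}_{k+1},\dots,\widehat{s}_n$ picking up $(1+\sigma_j)$ for $j\ge k$, and additionally the correction terms $\widehat{c}_j$ reinject $x_k$-dependent quantities multiplied by further $(1+\delta_j)(1+\beta_j)(1+\eta_{j+1})$ factors. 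Counting: the first-order coefficient of $x_k$ in $\rho_k$ is at most $3u$ (one each from $\eta$, a $\sigma$, and the correction chain — consistent with Remark~\ref{r_cfirst}), and the second-order coefficient requires counting how many distinct $\mathcal{O}(u^2)$ products involving $x_k$ survive; these are generated once per step $j$ from $k$ to $n$ through the feedback loop $y\to s\to z\to c\to y$, giving roughly $4(n-k)$ such products, plus a constant number (6) of boundary terms from the first and last iterations. The routine but careful part is verifying the constants $4(n-k)+6$; I would do this by induction on $n-k$, showing each additional summation step adds exactly $4u^2$ to the worst-case $|\rho_k|$.

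\textbf{Main obstacle.} The hard part is the bookkeeping in the index shifts: the four second-order sums in the first display arise from terms that are naturally indexed by $j$, $j-1$, $j+1$ (e.g. $\ddot{c}_{j-1}$ contributes to $\ddot{y}_j$ which contributes to $\ddot{s}_{j+1}$), so reconciling them into clean sums over $k$ from $2$ to $n-1$ with the correct combinations $(\mu_{k+1}+\beta_k+\delta_k)$ and $(\mu_{k+1}+\beta_k+\eta_k)$ demands care — in particular verifying that the "missing" boundary terms at $k=2$ and $k=n$ really do vanish or get absorbed into $s_n\sigma_n$ and the $(1+\sigma_n)\sum x_k\mu_k$ term, and that $\ddot{y}_{j-1}\delta_{j-1}$ (which looks like it should appear) cancels against part of the $-\ddot{c}_{j-1}\eta_j$ reindexing. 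Everything else is mechanical Taylor expansion with $\mathcal{O}(u^3)$ truncation.
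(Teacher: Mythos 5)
Your plan is essentially the paper's proof: truncate the child-errors $\ddot{y}_k$, $\ddot{z}_k$, $\ddot{c}_k$ to first order, substitute them into \eqref{eqn:srec}, and keep the factor $(1+\sigma_n)$ coming from $\dot{s}_n=(s_n+\ddot{s}_n)\sigma_n+\ddot{s}_n$, so the approach matches. One correction to the point you flagged as needing verification: the term $\ddot{y}_{j-1}\delta_{j-1}$ does not cancel against the reindexed $\ddot{y}_j\eta_j=-\ddot{c}_{j-1}\eta_j$ contribution, but survives as $+(x_k\delta_k+s_k\sigma_k)\delta_{k+1}$ and combines with $-(x_k\delta_k+s_k\sigma_k)\eta_{k+1}$ to yield precisely the $-\mu_{k+1}$ pieces in the stated sums (with the $j=3$ term vanishing because $\ddot{y}_2=0$), which is exactly why $\mu_k=\eta_k-\delta_k$ appears.
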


\begin{proof}
Truncate the expressions for $\ddot{y}_k$, $\ddot{z}_k$, and $\ddot{c}_k$ to first order, and substitute them into \eqref{eqn:srec}.  
\end{proof}

\begin{myremark}\label{r_cfirst}
The error bounds for compensated summation have sometimes been misstated in the literature. In contrast to \eqref{eqn:c_phibound}, the expressions in \cite[Theorem 8]{goldberg1991every},  \cite[(4.8)]{higham2002accuracy} and
\cite[Exercise 19 in Section 4.2.2]{Knuth2}
are equal to
\begin{equation*}
\widehat{s}_n=\sum_{k=1}^n{(1+\rho_k)x_k}\qquad \text{where}\quad
|\rho_k|\leq 2u + \mathcal{O}(nu^2).
\end{equation*}
It appears that this expression does not properly account for the final error $\sigma_n$. In comparison, \cite[page 9-5]{Kahan73} correctly states that
\begin{equation*}
    \widehat{s}_n+\widehat{c}_n=\sum_{k=1}^n{(1+\rho_k)x_k}\qquad \text{where}\quad
|\rho_k|\leq 2u + \mathcal{O}((n-k)u^2).
\end{equation*}
\end{myremark}

\subsection{Probabilistic bounds}\label{s_compProbBnd}
We derive probabilistic bounds for the child-errors in compensated summation (Lemma~\ref{lemma:sddot})
and derive
a bound on the summation error
in terms of the child-error bounds
(Theorem~\ref{lemma:compProbErr}),
which is, however, difficult to interpret.
Thus, we express the child-error bounds mostly
in terms of the partial sums
(Lemma~\ref{lemma:alphaBound}), which leads
to an alternative probabilistic
bound (Theorem~\ref{thm:comp_prob_err}).

We start with a probabilistic analogue of Lemma~\ref{lemma:fBound}. The
generic strategy would be to write each
child-error
in terms of a martingale involving the previous child-errors, and to bound them probabilistically with the Azuma-Hoeffding inequality (Lemma~\ref{lemma:Azuma}). 
Instead, we found it easier here 
to bound $\ddot{s}_k$ 
Lemma~\ref{lemma:Azuma}, and then apply the triangle inequality to $\ddot{y}_k$, $\ddot{z}_k$, and $\ddot{c}_k$.

\begin{mylemma}\label{lemma:sddot}
Let $\sigma_2,\delta_2,\beta_2,\eta_3,\sigma_3,\delta_3,\ldots,\eta_n$ in 
(\ref{e_model1}), (\ref{def:compFwdErr})
and (\ref{def:compChiErr})
be mean-independent zero-mean random variables, $0<\eta<1$, 
  and $\lambda_{n,\eta} \equiv \sqrt{2\ln(2n/\eta)}$. With probability at least $1-\eta$, the following bounds hold simultaneously:
  \begin{equation}\label{eqn:comp_errorBounds}
           |\ddot{y}_k|\leq Y_k, \qquad |\ddot{s}_k| \leq S_k, \qquad |\ddot{z}_k| \leq Z_k, \qquad |\ddot{c}_k| \leq C_k, \qquad 2\leq k \leq n, 
  \end{equation}
 where the quantities $Y_k$, $S_k$, $Z_k$, $C_k$ are defined by 
  \begin{equation}\label{eqn:bbase}
          Y_2\equiv 0,\qquad S_2 \equiv 0,\qquad Z_2 \equiv u|s_2|, \qquad C_2 \equiv u(|x_2|+Z_2) + u|s_2|,
  \end{equation}
  and\footnote{The bounds depend on $n$ and $\eta$, but 
we omit the subscripts, and simply write $S_k$ instead 
of $S_{k,n,\eta}$.} for $3\leq k \leq n$, 
  \begin{subequations}
    \begin{align}
     Y_k &\equiv C_{k-1}(1+u), \label{eqn:byrec}\\
    S_k &\equiv \lambda_{n,\eta}u\Biggl(\sum_{j=3}^k{\left( (|x_j|+Y_j)^2 + C_{j-1}^2 + (|x_{j-1}|+Z_{j-1})^2\right)}\Biggr)^{1/2},\label{eqn:bsrec}\\
  Z_k & \equiv u(|s_k|+S_k) + u(|x_k| + Y_k) + Y_k, \label{eqn:bzrec}\\
  C_k &\equiv  u(|x_k|+Z_k) +  u(|s_k|+S_k). \label{eqn:bcrec}
  \end{align}
  \end{subequations}
\end{mylemma}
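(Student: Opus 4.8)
The plan is to mimic the induction structure of Lemma~\ref{lemma:fBound}: induct jointly on $k$ and on the accumulated failure probability, using the Azuma-Hoeffding inequality only on $\ddot s_k$ and obtaining $Y_k$, $Z_k$, $C_k$ from $\ddot s_k$ (and the earlier quantities) by the triangle inequality, with no further probabilistic cost. Concretely, I would set up the induction hypothesis that, for some fixed $k$, the bounds $|\ddot y_j|\le Y_j$, $|\ddot s_j|\le S_j$, $|\ddot z_j|\le Z_j$, $|\ddot c_j|\le C_j$ for all $2\le j\le k-1$ hold simultaneously with probability at least $1-\frac{k-3}{n}\eta$ (the base case $k=2,3$ being deterministic from \eqref{eqn:cbase}, which yields exactly \eqref{eqn:bbase} after bounding each $|\delta_2|,|\sigma_2|\le u$). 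The induction step then has two parts: first a probabilistic step that produces the bound on $\ddot s_k$, then a purely algebraic step that propagates it to $Y_k$, $Z_k$, $C_k$.

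For the probabilistic step, I would use the expression \eqref{eqn:srec} for $\ddot s_k$ as a sum over $j=3,\dots,k$ of the three terms $(x_j+\ddot y_j)\eta_j$, $-\ddot c_{j-1}\beta_{j-1}$, and $-(x_{j-1}+\ddot z_{j-1})\delta_{j-1}$. Interleaving these in the linear order of the roundoffs $\eta_3,\sigma_3,\delta_3,\dots$ consistent with \eqref{model:second}, the partial sums form a martingale $Z_1\equiv 0$, $Z_i\equiv$ (partial sum through the $i$-th roundoff); the key point is that each coefficient multiplying a fresh roundoff $\eta_j$, $\beta_{j-1}$, or $\delta_{j-1}$ is a function of strictly earlier roundoffs, so mean-independence gives the martingale property. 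Conditioning on the event (of probability $\ge 1-\frac{k-3}{n}\eta$) that the induction hypothesis holds, the martingale increments are bounded in absolute value by $u(|x_j|+Y_j)$, $uC_{j-1}$, and $u(|x_{j-1}|+Z_{j-1})$ respectively. Applying Lemma~\ref{l_azuma} with $\delta=\eta/n$ to $|\ddot s_k| = |Z_{\text{last}}-Z_1|$ yields $|\ddot s_k|\le S_k$ with conditional probability $\ge 1-\eta/n$; combining with the conditioning event, all bounds through index $k-1$ plus $|\ddot s_k|\le S_k$ hold simultaneously with probability $\ge 1-\frac{k-2}{n}\eta$.

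The algebraic step is then routine: given $|\ddot s_k|\le S_k$ and the earlier bounds, \eqref{eqn:comp_cdot} and \eqref{eqn:comp_zdot} together with \eqref{eqn:zrec}, \eqref{eqn:crec} give $|\ddot z_k|\le u(|s_k|+S_k)+u(|x_k|+Y_k)+Y_k = Z_k$ and $|\ddot c_k|\le u(|x_k|+Z_k)+u(|s_k|+S_k)=C_k$ via the triangle inequality and $|\eta_k|,|\sigma_k|,|\delta_k|\le u$; likewise \eqref{eqn:yrec} gives $|\ddot y_{k}|\le C_{k-1}(1+u)=Y_k$ (note $Y_k$ is indexed so it only needs $C_{k-1}$, which is already controlled). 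None of these steps costs additional probability, so after the step all four families of bounds hold for $2\le j\le k$ with probability $\ge 1-\frac{k-2}{n}\eta$; taking $k=n$ gives the claimed $1-\frac{n-2}{n}\eta\ge 1-\eta$. The main obstacle I anticipate is bookkeeping: one must (i) fix an explicit linear order on $\{\sigma_j,\delta_j,\beta_j,\eta_j\}$ and verify carefully that each martingale increment's coefficient is measurable with respect to the strictly preceding roundoffs — in particular that $\ddot y_j$, $\ddot z_{j-1}$, $\ddot c_{j-1}$ do not depend on the roundoff currently being multiplied in — and (ii) get the $\eta_2=0$ boundary and the off-by-one indexing between $\ddot s_k$, $\ddot y_k$, $\ddot c_{k-1}$ exactly right so that \eqref{eqn:bbase}–\eqref{eqn:bcrec} match. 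The concentration and triangle-inequality content itself is entirely parallel to Lemma~\ref{lemma:fBound}.
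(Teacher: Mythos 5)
Your proposal is correct and follows essentially the same route as the paper: induct on $k$ with an accumulating failure probability, apply Azuma--Hoeffding (with $\delta=\eta/n$) only to the martingale formed from the expansion \eqref{eqn:srec} of $\ddot{s}_k$, and obtain the bounds $Y_k$, $Z_k$, $C_k$ deterministically from \eqref{eqn:yrec}, \eqref{eqn:zrec}, \eqref{eqn:crec} by the triangle inequality at no extra probabilistic cost. The only differences are cosmetic bookkeeping (your per-step probability accounting $1-\tfrac{k-3}{n}\eta$ versus the paper's $1-(k-1)\eta/n$, both summing to at most $\eta$).
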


\begin{proof}
This is an induction proof over $k$ and the failure probability $\eta$. 
\paragraph{Induction basis $k=2$} From \eqref{eqn:cbase} in Theorem \ref{thm:comp_recurrences} follows that (\ref{eqn:bbase})
holds deterministically.

\paragraph{Induction hypothesis} Assume that the bounds \eqref{eqn:comp_errorBounds} hold simultaneously for $2\leq j \leq k-1$ 
with probability at least $1-(k-1)\eta/n$. 
\paragraph{Induction step} The induction hypothesis
implies that $|\ddot{c}_{k-1}|\leq C_{k-1}$ holds with probability at least $1-(k-1)\eta$. From \eqref{eqn:yrec}, it follows that 
\[|\ddot{y}_k| = |\ddot{c}_{k-1}(1+\beta_{k-1})|\leq C_{k-1}(1+u)=Y_k\]
always holds. 

The expression \eqref{eqn:srec} for $\ddot{s}_k$ can be written as a martingale with respect to $\sigma_2,\delta_2,\beta_2,\eta_3,\sigma_3,\delta_3,\ldots,\eta_k$. By the induction hypothesis, the bounds 
\begin{align*}
|(x_j+\ddot{y}_j)\eta_j| &\leq u(|x_j|+Y_j), \qquad 3\leq j\leq k\\
|\ddot{c}_{j-1}\beta_{j-1}| &\leq u\,C_{j-1}, \\
    |(x_{j-1}+\ddot{z}_{j-1})\delta_{j-1}| &\leq u(|x_{j-1}|+Z_{j-1})
\end{align*}
all hold simultaneously with probability at least  $1-(k-1)\eta/n$. 
Lemma~\ref{lemma:Azuma} then implies that $|\ddot{s}_k|\leq S_k$ 
holds with probability at least $1-\eta/n$.

The bounds $|\ddot{z}_k| \leq Z_k$ and $|\ddot{c}_k|\leq C_k$ 
always hold, due to \eqref{eqn:zrec} and \eqref{eqn:crec}. 
\end{proof}

The following probabilistic bound expressed the error in compensated summation in terms of the bounds for child-errors.

\begin{theorem}\label{lemma:compProbErr}
Let $\sigma_2,\delta_2,\beta_2,\eta_3,\ldots,\eta_n,\sigma_n$ in
(\ref{e_model1}), (\ref{def:compFwdErr})
and (\ref{def:compChiErr})
be mean-independent zero-mean random variables, $0<\eta<1$, $0 < \delta < 1-\eta$, and $\lambda_{n,\eta}\equiv \sqrt{2\ln(2n/\eta)}$.
Then under (\ref{model:second}), with probability at least $1-(\delta+\eta)$, the error in Algorithm \ref{alg:compensated} is bounded by 
\begin{align*}
      \begin{split}
            |e_n| &\leq u\sqrt{2\ln(2/\delta)}\Biggl((s_n+S_n)^2 + \sum_{j=3}^n\left( (|x_j|+Y_j)^2+C_{j-1}^2 + (|x_{j-1}|+Z_{j-1})^2\right) \Biggr)^{1/2}, 
  \end{split}
\end{align*}
where $Y_j$, $S_j$, $Z_j$, $C_j$, $2\leq j \leq n$, are defined in Lemma \ref{lemma:sddot}. 
\end{theorem}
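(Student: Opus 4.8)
\textbf{Proof proposal for Theorem~\ref{lemma:compProbErr}.}

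The plan is to mirror the martingale argument used to prove Theorem~\ref{thm:model2Theorem}, but applied to the total forward error $e_n = \dot{s}_n$ of Algorithm~\ref{alg:compensated} rather than to general summation. First I would invoke the forward-error recursion \eqref{eqn:comp_sdot}, namely $\dot{s}_n = (s_n + \ddot{s}_n)\sigma_n + \ddot{s}_n$, and then expand the child-error $\ddot{s}_n$ via its explicit sum \eqref{eqn:srec} from Theorem~\ref{thm:comp_recurrences}. Substituting, $e_n = \dot{s}_n = (s_n+\ddot{s}_n)\sigma_n + \sum_{j=3}^n\bigl( (x_j+\ddot{y}_j)\eta_j - \ddot{c}_{j-1}\beta_{j-1} - (x_{j-1}+\ddot{z}_{j-1})\delta_{j-1}\bigr)$. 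The key structural observation is that this is a sum of terms, each of which is a product of a "coefficient" (a partial sum or input perturbed by earlier child-errors) times a single fresh roundoff drawn from the mean-independent sequence $\sigma_2,\delta_2,\beta_2,\eta_3,\sigma_3,\delta_3,\ldots,\eta_n,\sigma_n$; and if we order these roundoffs consistently with the algorithm's execution, the coefficient multiplying each roundoff depends only on strictly earlier roundoffs.

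Next I would set up the martingale. List the roundoffs in execution order as a single sequence, prepend a dummy $\delta_1 = 0$, and define $Z_0 \equiv 0$ together with the partial sums $Z_i$ obtained by accumulating the terms of $e_n$ in that order. Because each coefficient is a function of strictly earlier roundoffs and each fresh roundoff is zero-mean mean-independent of its predecessors by \eqref{model:second}, the $Z_i$ form a martingale with $Z_{\text{last}} - Z_0 = e_n$. To bound the martingale differences I would condition on the event, of probability at least $1-\eta$ by Lemma~\ref{lemma:sddot}, on which all the child-error bounds $|\ddot{y}_k|\le Y_k$, $|\ddot{s}_k|\le S_k$, $|\ddot{z}_k|\le Z_k$, $|\ddot{c}_k|\le C_k$ hold simultaneously. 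On that event the difference corresponding to the $\sigma_n$ term is at most $u(|s_n|+S_n)$, the one for $\eta_j$ is at most $u(|x_j|+Y_j)$, the one for $\beta_{j-1}$ is at most $u\,C_{j-1}$, and the one for $\delta_{j-1}$ is at most $u(|x_{j-1}|+Z_{j-1})$ — exactly the quantities appearing under the square root in the statement. I would then apply the relaxed Azuma–Hoeffding inequality (Lemma~\ref{l_azuma1}) with failure budget $\eta$ for the conditioning event and $\delta$ for the concentration, obtaining, with probability at least $1-(\delta+\eta)$, that $|e_n| = |Z_{\text{last}}-Z_0| \le u\sqrt{2\ln(2/\delta)}\bigl(\sum c_k^2\bigr)^{1/2}$, which is precisely the claimed bound once the $c_k^2$ are collected.

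One bookkeeping point deserves care: the roundoffs $\sigma_j$, $\beta_j$, $\eta_j$ for intermediate $j$ appear inside the child-errors $\ddot{y}$, $\ddot{z}$, $\ddot{c}$, and the same physical roundoff must not be counted twice as a martingale increment. The cleanest remedy is to treat the child-error bounds $Y_j, S_j, Z_j, C_j$ purely as \emph{deterministic constants} (valid on the good event) bounding the magnitudes of the coefficients, so that the martingale is built only over the "outer" roundoffs that multiply those coefficients — and to verify that, after substituting \eqref{eqn:srec}, every roundoff in the sequence $\sigma_2,\delta_2,\beta_2,\eta_3,\ldots,\eta_n,\sigma_n$ appears exactly once as an outer multiplier. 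I expect this correspondence — showing the expanded expression for $e_n$ is a sum over precisely these roundoffs, each appearing once with a coefficient measurable with respect to its predecessors — to be the main obstacle, though it is really a careful unwinding of the definitions in \eqref{def:compFwdErr}–\eqref{def:compChiErr} and the recurrences of Theorem~\ref{thm:comp_recurrences} rather than a deep difficulty. The $\eta_2 = 0$ convention and the base cases \eqref{eqn:cbase} handle the small-index terms, so no separate argument for $k=2$ is needed beyond noting $\ddot{y}_2=\ddot{s}_2=0$.
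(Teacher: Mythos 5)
Your proposal is correct and follows essentially the same route as the paper: substitute \eqref{eqn:srec} into \eqref{eqn:comp_sdot}, view the resulting sum as a martingale in the execution-ordered roundoffs, bound the increment magnitudes on the probability-$(1-\eta)$ event from Lemma~\ref{lemma:sddot}, and conclude with the relaxed Azuma--Hoeffding inequality (Lemma~\ref{l_azuma1}), exactly mirroring Theorem~\ref{thm:model2Theorem}. Your only imprecision is the side remark that every roundoff appears exactly once as an outer multiplier: $\sigma_2,\ldots,\sigma_{n-1}$ (and $\delta_n,\beta_n$) appear only inside coefficients, having cancelled in the derivation of \eqref{eqn:srec}, but this is harmless since those indices simply contribute zero martingale increments.
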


\begin{proof}
Keeping in mind that that $e_n=\dot{s}_n$,
substitute \eqref{eqn:srec} into \eqref{eqn:comp_sdot},
 bound the magnitude of the summands with probability at least $1-\eta$
 via \ref{lemma:sddot}
 and apply Lemma~\ref{l_azuma1} with additional probability $\delta$.
 This derivation mirrors the proof of Theorem \ref{thm:model2Theorem} which
 relies on Lemma~\ref{lemma:fBound} to
 bound the magnitude of the summands in the martingale.
 \end{proof}

The significant number of interacting
terms make Theorem \ref{lemma:compProbErr} difficult to interpret, in comparison to 
Theorem \ref{thm:model2Theorem}. The simplest approach at this point would be to truncate the terms $S_k,Y_k,Z_k,C_k$ so that the overall bound holds to second order (or higher, if desired). With Lemma \ref{lemma:alphaBound} and Theorem \ref{thm:comp_prob_err}, we instead show that it is possible to obtain a bound that holds to all orders, at the cost of a more complicated proof.
Consequently we derive an alternative bound in the same manner as before, alternating between the triangle inequality and the following bound.

\begin{mylemma}\label{lemma:alphaBound}
 There is a constant $\alpha = \sqrt{6} + \mathcal{O}(u)$, so that the terms 
in Lemma~\ref{lemma:sddot} can be bounded by
 \[\left(\sum_{j=3}^k\left(Y_j^2 + C_{j-1}^2 + Z_{j-1}^2\right)\right)^{1/2}\leq \alpha u \left(\sum_{j=2}^{k-1}(|s_j|+|x_j|+S_j)^2\right)^{1/2}, \qquad 3\leq k\leq n.\]
\end{mylemma}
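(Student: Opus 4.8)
The plan is to bound each of the three families $Y_j$, $C_{j-1}$, $Z_{j-1}$ individually by expressions of the form $(\text{const}) \cdot u \cdot (|s_\ell| + |x_\ell| + S_\ell)$ for an appropriate index $\ell \le k-1$, and then combine the three term-by-term bounds under the $2$-norm. First I would read off from \eqref{eqn:bcrec} that $C_{k-1} = u(|x_{k-1}| + Z_{k-1}) + u(|s_{k-1}| + S_{k-1})$, so $C_{k-1}$ is itself $u$ times a sum controlled by $|s_{k-1}| + |x_{k-1}| + S_{k-1}$ plus a lower-order contribution from $Z_{k-1}$; similarly \eqref{eqn:byrec} gives $Y_k = C_{k-1}(1+u)$, so $Y_j$ is just $C_{j-1}$ up to the harmless factor $(1+u)$, and \eqref{eqn:bzrec} gives $Z_{k} = u(|s_k| + S_k) + u(|x_k| + Y_k) + Y_k$, so $Z_{k-1}$ is dominated by $Y_{k-1}$ plus $u$ times a sum controlled by $|s_{k-1}| + |x_{k-1}| + S_{k-1}$. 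The key structural observation is that each of $Y_j, C_{j-1}, Z_{j-1}$ reduces, to leading order in $u$, to a quantity of the form $u\,(|s_\ell| + |x_\ell| + S_\ell)$ with $\ell = j-1$; the recursive appearances of $Y$ inside $Z$ and of $Z$ inside $C$ contribute only factors $1 + \mathcal{O}(u)$, which is exactly where the $\mathcal{O}(u)$ in $\alpha = \sqrt 6 + \mathcal{O}(u)$ comes from.

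Concretely I would first establish, by a short induction on $j$ (or by unwinding the three mutual recurrences \eqref{eqn:byrec}--\eqref{eqn:bcrec} directly), scalar bounds
\[
Y_j \le (1 + \mathcal{O}(u))\,u\,\bigl(|x_{j-1}| + |s_{j-1}| + S_{j-1}\bigr), \quad
C_{j-1} \le (1 + \mathcal{O}(u))\,u\,\bigl(|x_{j-1}| + |s_{j-1}| + S_{j-1}\bigr),
\]
\[
Z_{j-1} \le (1 + \mathcal{O}(u))\,u\,\bigl(|x_{j-1}| + |s_{j-1}| + S_{j-1}\bigr).
\]
For $Z_{j-1}$ one uses $Z_{j-1} = u(|s_{j-1}| + S_{j-1}) + u(|x_{j-1}| + Y_{j-1}) + Y_{j-1}$ together with the already-derived bound $Y_{j-1} \le (1+\mathcal{O}(u))u(|x_{j-2}| + |s_{j-2}| + S_{j-2})$; the cleanest way to package this is to note $|x_{j-2}| + |s_{j-2}| + S_{j-2}$ and $|x_{j-1}| + |s_{j-1}| + S_{j-1}$ both appear, and one can absorb the shifted-index term into the sum over $j$ at the cost of at most doubling, i.e.\ the constant under the square root. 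Then, writing $a_{j-1} \equiv |s_{j-1}| + |x_{j-1}| + S_{j-1}$, each of $Y_j^2$, $C_{j-1}^2$, $Z_{j-1}^2$ is at most $(1+\mathcal{O}(u))u^2 a_{j-1}^2$ (modulo the index-shift bookkeeping), so
\[
\sum_{j=3}^k \bigl(Y_j^2 + C_{j-1}^2 + Z_{j-1}^2\bigr) \le \bigl(3 + \mathcal{O}(u)\bigr)u^2 \sum_{j=3}^k a_{j-1}^2 \le \bigl(6 + \mathcal{O}(u)\bigr)u^2 \sum_{j=2}^{k-1} a_j^2,
\]
where the final factor of $2$ soaks up the index-shifted contributions of $Y$ through $Z$ through $C$. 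Taking square roots gives the claim with $\alpha = \sqrt 6 + \mathcal{O}(u)$.

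The main obstacle I anticipate is the bookkeeping around the index shifts: $Y_j$ involves $C_{j-1}$ which involves $Z_{j-1}$ which involves $Y_{j-1}$, so a naive expansion reaches back two indices, and one must be careful that these lower-order cross terms genuinely only inflate the constant from $3$ to $6$ and do not, e.g., produce a factor growing with $k$. The way I would control this is to prove the three scalar bounds above \emph{simultaneously} by a single induction on $j$, so that at step $j$ the bound on $Y_{j-1}$ (hence on the recursive piece of $Z_{j-1}$ and $C_{j-1}$) is already available with a uniform constant $1 + \mathcal{O}(u)$; the $\mathcal{O}(u)$ terms form a convergent geometric-type series in $u$ and never accumulate a dimension-dependent factor. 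A secondary, purely cosmetic point is that $S_j$ itself is defined by the recurrence \eqref{eqn:bsrec} in terms of $Y$, $C$, $Z$ at smaller indices, but we never need to expand $S_j$ here — it is carried along as an opaque nonnegative quantity on both sides — so no circularity arises.
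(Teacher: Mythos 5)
Your overall strategy---reduce $Y_j$, $C_{j-1}$, $Z_{j-1}$ to multiples of $u\,(|s_\ell|+|x_\ell|+S_\ell)$ and combine in the 2-norm---is in the same spirit as the paper's proof, but the two steps where the actual work lies do not hold as stated. First, the pointwise bounds $Y_j,\,C_{j-1},\,Z_{j-1}\le (1+\mathcal{O}(u))\,u\,(|s_{j-1}|+|x_{j-1}|+S_{j-1})$ are false in general. Writing $\omega_j\equiv |s_j|+|x_j|+S_j$, the recurrences give $C_{j-1}=u(1+u)\omega_{j-1}+u(1+u)^2C_{j-2}$, so your induction must absorb $u^2\omega_{j-2}$ (and, further back, $u^{i+1}\omega_{j-1-i}$) into $(1+\mathcal{O}(u))u\,\omega_{j-1}$; but the $\omega_j$ are not monotone: a large $|x_{j-2}|$ cancelling $s_{j-3}$ can be followed by tiny $|s_{j-1}|$ and $|x_{j-1}|$, in which case $\omega_{j-1}$ is only of size $S_{j-1}\approx\lambda_{n,\eta}u|x_{j-2}|$, and the neglected term is a relative $\Theta(1/\lambda_{n,\eta})$ correction, not $\mathcal{O}(u)$. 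Run honestly, your induction yields at best $\alpha=\sqrt{6}\,(1+\mathcal{O}(1/\lambda_{n,\eta}))$, weaker than the statement. Second, the constant accounting is not a valid inequality chain: to leading order $Z_{j-1}\approx u(\omega_{j-1}+\omega_{j-2})$, so the three families contribute $1+1+4=6$ (the $4$ coming entirely from squaring and aggregating the two-term $Z$ bound), and your displayed intermediate bound $\sum_j\bigl(Y_j^2+C_{j-1}^2+Z_{j-1}^2\bigr)\le(3+\mathcal{O}(u))u^2\sum_j a_{j-1}^2$ is false---with all $\omega_j$ comparable the left side approaches $6u^2\sum_j a_{j-1}^2$. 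That ``$3\times 2$'' lands on the correct $6$ is fortuitous; the uniform factor $2$ for index shifts is not justified by anything you wrote.

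The paper avoids both problems by aggregating before bounding: it stacks $C_2,\ldots,C_{k-1}$, $Z_2,\ldots,Z_{k-1}$, $\omega_2,\ldots,\omega_{k-1}$ into vectors, writes the $C$-recurrence as the componentwise inequality ${\bf c}_k\le u(1+u){\bf w}_k+\beta\,{\bf U}{\bf c}_k$ with $\beta=u(1+u)^2$ and ${\bf U}$ an upper shift matrix, and inverts ${\bf I}-\beta{\bf U}$, so all backward-reaching terms cost only the factor $1/(1-\beta)=1+\mathcal{O}(u)$ with no comparison of individual $\omega_j$'s ever needed; it then bounds $\|{\bf z}_k\|_2\le u\|{\bf w}_k\|_2+(1+u)^2\|{\bf c}_k\|_2$ and combines $(1+u){\bf c}_k$, ${\bf c}_k$, ${\bf z}_k$ in the Frobenius norm, yielding exactly $\alpha^2=\bigl(1+3(1+u)^2+2(1+u)^4\bigr)/(1-\beta)^2=6+\mathcal{O}(u)$. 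To repair your argument, replace the pointwise scalar induction by this kind of $\ell_2$-aggregated (shift-operator) estimate; the final Frobenius-norm combination then produces the $1+1+4$ split automatically.
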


\begin{proof}
 The precise value of $\alpha$ is derived in the Appendix~\ref{apx:alpha}.
 \end{proof}

The next bounds for compensated summation is expressed in terms of partial sums and inputs. 

\begin{theorem}\label{thm:comp_prob_err}
Let $\sigma_2, \delta_2, \beta_2, \eta_3, \sigma_3, \delta_3, \ldots, \eta_n, \sigma_n$ be mean-independent zero-mean random variables, $0<\eta<1$, $0 < \delta < 1-\eta$, and
$\lambda_{n,\eta}\equiv \sqrt{2\ln(2n/\eta)}$. Then with probability at least $1-(\delta+\eta)$, the error in Algorithm \ref{alg:compensated} is bounded by

\begin{align*}
     |e_n|&\leq u\sqrt{2\ln(2/\delta)}
\left(|s_n| + \gamma(\sqrt{2}+\alpha u)\sqrt{\sum_{k=2}^nx_k^2}+\gamma\alpha u\sqrt{\sum_{k=2}^ns_k^2}\right) \\
&\leq u\sqrt{2\ln(2/\delta)}
\left(1+\sqrt{2}+\sqrt{6}(\sqrt{n}+1) u\right)\sum_{k=1}^n|x_k| + \mathcal{O}(u^3), 
\end{align*}
where 
\begin{align*}
    \alpha &\equiv \frac{\sqrt{1+3(1+u)^2+2(1+u)^4}}{1-u(1+u)^2}= \sqrt{6}+\mathcal{O}(u), \\
    \gamma &\equiv \sqrt{1+\lambda_{n,\eta}^2u^2}\left(1 + \lambda_{n,\eta}\alpha\sqrt{2n}u^2\exp\left(\lambda_{n,\eta}^2\alpha^2nu^4\right) \right) = 1 + \mathcal{O}(u^2). 
\end{align*}
\end{theorem}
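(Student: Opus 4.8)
\textbf{Proof proposal for Theorem~\ref{thm:comp_prob_err}.}
The plan is to start from the hard-to-read bound in Theorem~\ref{lemma:compProbErr} and systematically replace the child-error terms $Y_j, Z_j, C_j$ by quantities depending only on partial sums $s_j$, inputs $x_j$, and the bounds $S_j$, using Lemma~\ref{lemma:alphaBound}. First I would write the right-hand side of Theorem~\ref{lemma:compProbErr} as a $2$-norm of the vector with entries $(s_n+S_n)$ and, for $3\le j\le n$, the three entries $(|x_j|+Y_j)$, $C_{j-1}$, $(|x_{j-1}|+Z_{j-1})$. Applying the triangle inequality in the $2$-norm separates this into a ``clean'' part $\bigl(|s_n|^2+\sum_{j=3}^n(|x_j|^2+|x_{j-1}|^2)\bigr)^{1/2}\le (|s_n|^2+2\sum_{k=2}^n x_k^2)^{1/2}$ plus a ``child-error'' part $\bigl(S_n^2+\sum_{j=3}^n(Y_j^2+C_{j-1}^2+Z_{j-1}^2)\bigr)^{1/2}$. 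To the latter I apply Lemma~\ref{lemma:alphaBound}, bounding $\sum_{j=3}^n(Y_j^2+C_{j-1}^2+Z_{j-1}^2)\le \alpha^2u^2\sum_{j=2}^{n-1}(|s_j|+|x_j|+S_j)^2$, and then handle the remaining $S_j$ terms.

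The key obstacle is that $S_j$ still appears on the right, so I need to close the recursion. This is exactly the same mechanism as in the proof of Theorem~\ref{thm:model2Analysis}: from the definition~\eqref{eqn:bsrec}, $S_k = \lambda_{n,\eta}u\bigl(\sum_{j=3}^k((|x_j|+Y_j)^2+C_{j-1}^2+(|x_{j-1}|+Z_{j-1})^2)\bigr)^{1/2}$, so $\sum_{k=2}^n S_k^2$ can be expanded, split by the triangle inequality into a clean part and a child-error part, and the child-error part fed back through Lemma~\ref{lemma:alphaBound} again. Because each index reappears in at most $n$ of the outer sums (compensated summation is sequential, so the ``tree'' has height $n$), iterating produces a geometric-type series in $\lambda_{n,\eta}^2\alpha^2 n u^4$; summing it and using $\sum 2^{-j}\le 1$ via the Cauchy--Schwarz trick~\eqref{eqn:cauchySchwarz} collapses everything into the factor $\gamma = \sqrt{1+\lambda_{n,\eta}^2u^2}\,(1+\lambda_{n,\eta}\alpha\sqrt{2n}u^2\exp(\lambda_{n,\eta}^2\alpha^2nu^4))$. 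Tracking the clean parts of $\sqrt{\sum x_k^2}$ and $\sqrt{\sum s_k^2}$ through these iterations, with the $\sqrt{2}$ coming from the two copies of $x_k^2$ and the $\alpha u$ factors from each pass through Lemma~\ref{lemma:alphaBound}, gives the coefficients $\gamma(\sqrt2+\alpha u)$ on $\sqrt{\sum x_k^2}$ and $\gamma\alpha u$ on $\sqrt{\sum s_k^2}$, yielding the first displayed bound.

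For the second displayed bound I would bound $|s_n|$ and each $\sqrt{\sum_{k} \cdot^2}$ crudely: $\sqrt{\sum_{k=2}^n x_k^2}\le \sum_{k=1}^n|x_k|$, $\sqrt{\sum_{k=2}^n s_k^2}\le \sqrt{n}\max_k|s_k|\le \sqrt{n}\sum_{k=1}^n|x_k|$ (sequential summation has height $n-1$, so Lemma~\ref{l_rel} applies, or one uses $|s_k|\le\sum|x_j|$ directly), and $|s_n|\le\sum_{k=1}^n|x_k|$. Substituting $\gamma = 1+\mathcal{O}(u^2)$ and $\alpha=\sqrt6+\mathcal{O}(u)$ and collecting orders: the $u^0$ coefficient is $1+\sqrt2$ (from $|s_n|$ and $\sqrt2\sqrt{\sum x_k^2}$), the $u^1$ coefficient is $\alpha u$ from the $x_k$ term plus $\sqrt{n}\,\alpha u$ from the $s_k$ term, i.e.\ $\sqrt6(\sqrt n+1)u$, and all else is absorbed into $\mathcal{O}(u^3)$ (note the $\gamma\alpha u$ on the $s$-term times $\sqrt n$ is $\mathcal{O}(u)$, while the $\mathcal{O}(u^2)$ part of $\gamma$ times anything $\mathcal{O}(u)$ is $\mathcal{O}(u^3)$). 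The main difficulty is purely bookkeeping: keeping the iterated triangle/Cauchy--Schwarz bounds honest so that the geometric series closes to exactly the stated $\gamma$, and making sure the $S_j$-feedback does not generate terms outside the claimed closed form; the probabilistic content (mean-independence, the $1-(\delta+\eta)$ confidence) is entirely inherited from Theorem~\ref{lemma:compProbErr} and Lemma~\ref{lemma:sddot} and needs no new argument.
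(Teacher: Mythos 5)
Your proposal is correct and follows essentially the same route as the paper: start from Theorem~\ref{lemma:compProbErr}, split the 2-norm into a clean part ($|s_n|$, $\sqrt{2}\sqrt{\sum x_k^2}$) and a child-error part, apply Lemma~\ref{lemma:alphaBound}, close the $S_j$-feedback by the same iteration as in Theorem~\ref{thm:model2Analysis} with the Cauchy--Schwarz/binomial-sum bound to produce $\gamma$, and finish the second display with crude norm bounds; the probabilistic content is indeed inherited entirely from Lemma~\ref{lemma:sddot} and Theorem~\ref{lemma:compProbErr}. The only cosmetic difference is that the paper isolates $|s_n|$ via $(a+b)^2+c^2\leq(a+\sqrt{b^2+c^2})^2$ and folds $S_n$ in exactly to get the $\sqrt{1+\lambda_{n,\eta}^2u^2}$ factor, which your bookkeeping would reproduce.
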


\begin{proof}
Remember that $e_n=\dot{s}_n$, and abbreviate the summands in Theorem~\ref{lemma:compProbErr} and in (\ref{eqn:bsrec}) by
 \[
  R_j \equiv \left((|x_j|+Y_j)^2 + C_{j-1}^2 + (|x_{j-1}|+Z_{j-1})^2\right)^{1/2}, \qquad 3\leq j \leq n. 
 \]
We treat $\sum_{j=3}^n{R_j^2}$ as a two-norm and apply
the following inequality for non-negative vectors $c$, $x$, $y$, 
and $z$,
 \begin{align*}
 \left(\|x+y\|_2^2+\|x+z\|_2^2+\|c\|_2^2\right)^{1/2}
 \leq \sqrt{2}\|x\|_2+\|c+y+z\|_2.
 \end{align*}
followed by Lemma \ref{lemma:alphaBound}, two
triangle inequalities, and the definition of $S_j$,
 \begin{align*}
    \left(\sum_{j=3}^n R_j^2 \right)^{1/2}
    &\leq  \sqrt{2}\left(\sum_{k=2}^nx_k^2\right)^{1/2} + \left(\sum_{j=3}^n \left(Y_j^2 + C_{j-1}^2 + Z_{j-1}^2 \right)\right)^{1/2}\\
    &\leq \sqrt{2}\left(\sum_{k=2}^nx_k^2\right)^{1/2} + \alpha u \left(\sum_{j=2}^{n-1} (|s_{j}|+|x_j|+S_j)^2 \right)^{1/2}\\
    &\leq  \sqrt{2}\left(\sum_{k=2}^nx_k^2\right)^{1/2} + \alpha u \left(\sum_{k=2}^{n}(|s_k|+|x_k|)^2\right)^{1/2} + \alpha u  \left(\sum_{j=3}^{n-1} S_j^2 \right)^{1/2}\\
    &\leq  (\sqrt{2}+\alpha u)\left(\sum_{k=2}^nx_k^2\right)^{1/2} + \alpha u \sqrt{\sum_{k=2}^{n}s_k^2} + \lambda\alpha u^2  \left(\sum_{j<j_1\leq n}R_j^2 \right)^{1/2}.
 \end{align*}
Proceed as in the proof of Theorem~\ref{thm:model2Analysis},
 \begin{equation}\label{eqn:c_Rbound}
     \left(\sum_{j=3}^n R_j^2 \right)^{1/2}
    \leq \left(\sum_{j=0}^n(\lambda_{n,\eta}\alpha u^2)^j\sqrt{\binom{n}{j}}\right)\left((\sqrt{2}+\alpha u)\sqrt{\sum_{k=2}^nx_k^2}+\alpha u\sqrt{\sum_{k=2}^ns_k^2}\right),
 \end{equation}
where
 \begin{equation}\label{eqn:c_binBound}
     \sum_{j=1}^n(\lambda_{n,\eta}\alpha u^2)^j\sqrt{\binom{n}{j}}
     \leq 
     \lambda_{n,\eta}\alpha\sqrt{2n}u^2\exp\left(\lambda_{n,\eta}^2\alpha^2nu^4\right). 
 \end{equation}
From Theorem~\ref{lemma:compProbErr}; the inequality 
$(a+b)^2+c^2\leq (a+\sqrt{b^2+c^2})^2$ for $a,b,c\geq 0$; and the 
definition of $S_n$ in (\ref{eqn:bsrec}) follows
\begin{align*}
    |\dot{s}_n|&\leq u\sqrt{2\ln(2/\delta)}\left((s_n+S_n)^2 + \sum_{j=3}^nR_j^2\right)^{1/2}\\
    &\leq u\sqrt{2\ln(2/\delta)}|s_n| + 
    u\sqrt{2\ln(2/\delta)}\left(S_n^2 + \sum_{j=3}^nR_j^2\right)^{1/2}\\
    &= u\sqrt{2\ln(2/\delta)}|s_n| + 
    u\sqrt{2\ln(2/\delta)}\sqrt{1+\lambda_{n,\eta}^2u^2}\left(\sum_{j=3}^nR_j^2\right)^{1/2}.
\end{align*}
Combine this with \eqref{eqn:c_Rbound} and \eqref{eqn:c_binBound}.
\end{proof}

Note that $\gamma$ remains close to 1 as long as $\lambda_{n,\eta}u\ll 1$ and  $\lambda_{n,\eta}\alpha \sqrt{2n}u^2\ll 1$.


\section{Mixed precision}\label{s_mixedPrec}
Mixed-precision algorithms aim to do as much of the computation as possible in a lower precision without significantly degrading the accuracy of the computed result; see the survey \cite{abdelfattah2021survey}.
We extend Corollaries \ref{c_28} and~\ref{c_210} to any number of
precisions (Theorems \ref{t_51} and~\ref{t_52}), present the first
probabilistic error bounds for the mixed precision \texttt{FABsum}
algorithm (Corollary~\ref{c_fabsum}), and end with a heuristic
for designing mixed-precision algorithms (Remark~\ref{r_51}).

The \texttt{FABsum} summation algorithm \cite[Algorithm 3.1]{blanchard2020class} computes the sum $s_n=x_1+\cdots +x_n$ in two stages. First, it splits the inputs into blocks of $b$ numbers,
and sums each block with a fast summation algorithm, say in low precision. Second, it sums the results with an accurate summation algorithm, say in high precision 
or with compensated summation. 
We extend our approach to mixed precision, and derive the first rigorous probabilistic error bounds for \texttt{FABsum}. 
Our computational model is very general, so that,
in theory, each operation can be evaluated in a different precision.

\paragraph{Probabilistic model for sequences of roundoffs in mixed precision} 
Extend model~(\ref{model:second})
for roundoffs in terms of mean-independent zero-mean random variables
$\delta_k$ by assuming in addition that each $\delta_k$
can be a roundoff in a different precision
$u_k$, that is,
$|\delta_k|\leq u_k$, $1\leq k\leq n$.

Below are the straightforward generalizations of Corollaries \ref{c_28}
and~\ref{c_210}.

\begin{theorem}\label{t_51}
   Let $0<\eta < 1$, $0 < \delta < 1-\eta$, and
$\lambda_{n,\eta} \equiv \sqrt{2\ln(2n/\eta)}$.
   Then under (\ref{model:second}), with probability at least $1-(\delta+\eta)$, the error in Algorithm \ref{alg:sum} is bounded by
  \begin{equation}
      |e_n| \leq \sqrt{2\ln(2/\delta)}\left(\sum_{j=2}^nu_j^2(|s_j|+F_{j,n,\eta})^2\right)^{1/2},
  \end{equation}
  where $F_{j,n,\eta}$ are defined by the recurrence 
  \begin{equation}
      F_{2,n,\eta}\equiv 0, \qquad F_{k,n,\eta}\equiv \lambda_{n,\eta}\left(\sum_{j\prec k}u_j^2\left(|s_j|+F_{j,n,\eta}\right)^2\right)^{1/2}, \qquad 3\leq k \leq n. 
  \end{equation}
\end{theorem}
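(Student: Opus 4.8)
The plan is to follow the same martingale-plus-Azuma-Hoeffding strategy used for the mono-precision Corollary~\ref{c_28}, but carrying the individual precisions $u_j$ through every inequality where previously a single $u$ was factored out. First I would prove a mixed-precision analogue of Lemma~\ref{lemma:fBound}: using the exact child-error recurrence \eqref{eqn:frecurrence} written as the contiguous sum $f_k=\sum_{j=2}^{k-1}(s_j+f_j)\delta_j\mathds{1}_{j\prec k}$, the partial sums $Z_i=\sum_{j=2}^i(s_j+f_j)\delta_j\mathds{1}_{j\prec k}$ form a martingale with respect to $\delta_1,\ldots,\delta_{k-1}$, and now the martingale differences obey $|Z_i-Z_{i-1}|\le u_i(|s_i|+F_{i,n,\eta})$ when $i\prec k$ and zero otherwise. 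Running the same induction over $k$ and over the failure probability, with $\delta=\eta/n$ in each application of Lemma~\ref{l_azuma} (I use $n$ rather than $\tilde n$ here, matching Corollary~\ref{c_28}, so no knowledge of $L$ is needed), gives that with probability at least $1-\eta$ all the bounds
\[
|f_k|\le F_{k,n,\eta},\qquad F_{2,n,\eta}\equiv 0,\qquad F_{k,n,\eta}\equiv\lambda_{n,\eta}\Bigl(\sum_{j\prec k}u_j^2(|s_j|+F_{j,n,\eta})^2\Bigr)^{1/2}
\]
hold simultaneously for $2\le k\le n$.

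Next I would assemble the final error. By Lemma~\ref{lemma:forwardErrorRec}, $e_n=\sum_{j=2}^n(s_j+f_j)\delta_j$, so with $\delta_1=0$ the partial sums $Z_i=\sum_{j=2}^i(s_j+f_j)\delta_j$ form a martingale with respect to $\delta_1,\ldots,\delta_n$. On the event of probability at least $1-\eta$ from the previous paragraph, the martingale differences satisfy $|Z_i-Z_{i-1}|=|(s_i+f_i)\delta_i|\le u_i(|s_i|+F_{i,n,\eta})$ for $2\le i\le n$. Applying the relaxed Azuma-Hoeffding inequality (Lemma~\ref{l_azuma1}) with these $c_i=u_i(|s_i|+F_{i,n,\eta})$ and failure budget $\delta+\eta$ yields, with probability at least $1-(\delta+\eta)$,
\[
|e_n|=|Z_n-Z_1|\le\sqrt{2\ln(2/\delta)}\Bigl(\sum_{j=2}^n u_j^2(|s_j|+F_{j,n,\eta})^2\Bigr)^{1/2},
\]
which is exactly the claimed bound. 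This mirrors the structure of Theorem~\ref{thm:model2Theorem} and Corollary~\ref{c_28} line for line, with $u_j$ in place of $u$ inside each square root.

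I do not expect a serious obstacle; the proof is essentially a bookkeeping exercise. The one point requiring mild care is the choice of the probability parameter $\delta=\eta/n$ in the inner Azuma-Hoeffding applications and confirming that the telescoping of failure probabilities still closes to $1-\eta$ after $n-1$ (rather than $\tilde n$) steps — this is why the recurrence is stated with $\lambda_{n,\eta}$ and starts the nontrivial case at $k=3$, and it is exactly the bookkeeping already carried out in the proof of Lemma~\ref{lemma:fBound}. A second small point is that the argument nowhere used that the precisions are equal: the model~(\ref{model:second}) only asserts mean-independence and zero mean, which is untouched by allowing $|\delta_k|\le u_k$, so every appeal to Lemmas~\ref{l_azuma} and~\ref{l_azuma1} goes through verbatim once the per-step constants $c_k$ are replaced by their mixed-precision values. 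Hence the proof reduces to: state the mixed-precision version of Lemma~\ref{lemma:fBound}, prove it by the same two-parameter induction, and then invoke Lemma~\ref{l_azuma1} once more for $e_n$.
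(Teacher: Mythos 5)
Your proposal is correct and matches the paper's intent: the paper states Theorem~\ref{t_51} as the direct generalization of Corollary~\ref{c_28}, obtained by rerunning the proof of Lemma~\ref{lemma:fBound} and Theorem~\ref{thm:model2Theorem} with the per-step bounds $|\delta_j|\leq u_j$ carried through, exactly as you do. Your handling of the inner failure budget ($\delta=\eta/n$ per Azuma application, union-bounded over at most $\tilde n\leq n-1$ nontrivial steps) and the final appeal to Lemma~\ref{l_azuma1} reproduce the paper's argument line for line.
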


We derive a closed-form error bound with the same techniques as in the proof of 
Theorem~\ref{thm:model2Analysis}.

\begin{theorem}\label{thm:mixPrecError}\label{t_52}
      Let $0<\eta < 1$, and $0 < \delta < 1-\eta$. Then under (\ref{model:second}), with probability at least $1-(\delta+\eta)$, the error in Algorithm \ref{alg:sum} is bounded by
  \begin{align*}
      |e_n| &\leq \sqrt{2\ln(2/\delta)}\left(1+\phi_{n,\tilde{h},\eta}\right)\sqrt{\sum_{k=2}^nu_k^2s_k^2} \\
            &\leq \sqrt{\tilde{h}}\sqrt{2\ln(2/\delta)}\left(1+\phi_{n,\tilde{h},\eta}\right)\sum_{k=1}^n|x_k|, 
  \end{align*}
  where $\tilde{h} \equiv \max_k \sum_{k\prec \ell \preceq n}u_\ell^2$ is the {\em weighted height} of the computational tree and 
 \begin{equation}\label{def_phi2}
    \phi_{n,\tilde{h},\eta} \equiv \lambda_{n,\eta}\sqrt{2\tilde{h}}\,u\,
    \exp\left(\lambda_{n,\eta}^2\tilde{h}u^2\right)\qquad 
\text{with}\qquad    
\lambda_{n,\eta} \equiv \sqrt{2\ln(2n/\eta)}.
\end{equation}
 \end{theorem}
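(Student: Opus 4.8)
\textbf{Proof proposal for Theorem~\ref{thm:mixPrecError}.}
The plan is to mirror the proof of Theorem~\ref{thm:model2Analysis} step by step, replacing the uniform unit roundoff $u$ by the node-dependent roundoffs $u_j$ throughout, and tracking how the weighted height $\tilde h \equiv \max_k \sum_{k\prec \ell \preceq n} u_\ell^2$ replaces the combinatorial factors $\binom{h}{j}$. First I would start from the bound in Theorem~\ref{t_51}, apply the two-norm triangle inequality to split $\bigl(\sum_{j_1=2}^n u_{j_1}^2(|s_{j_1}|+F_{j_1,n,\eta})^2\bigr)^{1/2}$ into $\sqrt{\sum_k u_k^2 s_k^2}$ plus a tail term $\bigl(\sum_{j_1 \preceq n} u_{j_1}^2 F_{j_1,n,\eta}^2\bigr)^{1/2}$, and then substitute the recurrence for $F_{j_1,n,\eta}$ from Theorem~\ref{t_51} into that tail.

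The key step is the bookkeeping that produces $\tilde h$ in place of $h$. When one expands $\sum_{j_1\preceq n} u_{j_1}^2 F_{j_1,n,\eta}^2 = \sum_{j_1\preceq n}\sum_{j_2\prec j_1} u_{j_1}^2 \lambda_{n,\eta}^2 u_{j_2}^2 (|s_{j_2}|+F_{j_2,n,\eta})^2$ and wants to collect the coefficient of a fixed $(|s_{j_2}|+F_{j_2,n,\eta})^2$, one gets $\lambda_{n,\eta}^2 u_{j_2}^2 \sum_{j_2 \prec j_1 \preceq n} u_{j_1}^2 \le \lambda_{n,\eta}^2 u_{j_2}^2\,\tilde h$ by the very definition of $\tilde h$. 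So each application of the recurrence contributes a factor $\lambda_{n,\eta}^2 \tilde h$ (with one $u_{j}^2$ staying attached to the surviving index), exactly as the unweighted proof contributed $\lambda_{\tilde n,\eta}^2 h$ with the binomial coefficient $\binom{h}{r}$ at the $r$-th level. Iterating $r$ times, the coefficient multiplying $\sqrt{\sum_k u_k^2 s_k^2}$ at level $r$ is at most $\lambda_{n,\eta}^r \tilde h^{r/2}$ — here I need to be slightly careful that in the weighted case the clean binomial identity $\sum_r \binom{h}{r} t^r = (1+t)^h$ is replaced by a cruder geometric-type bound, but since we only need an upper bound of the form $\sum_{r\ge 1}(\lambda_{n,\eta}\sqrt{2\tilde h}\,u)^r$-ish, I would instead route through the same Cauchy–Schwarz trick \eqref{eqn:cauchySchwarz} with weights $\gamma_j = 2^j$ to get $\sum_{r=1}^{\infty}\lambda_{n,\eta}^r \tilde h^{r/2} u^r \binom{\cdot}{r}^{1/2} \le \sqrt{(1+2\lambda_{n,\eta}^2\tilde h u^2)^{\lceil\tilde h\rceil}-1} \le \sqrt{\exp(2\lambda_{n,\eta}^2 \tilde h u^2)-1} \le \phi_{n,\tilde h,\eta}$, using $\sqrt{e^x-1}\le\sqrt{x e^x}$. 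Pulling a uniform $u \equiv \max_k u_k$ out of the higher-order terms is legitimate since those terms only need to be controlled from above.

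The second (coarser) bound then follows by observing $\sum_{k=2}^n u_k^2 s_k^2 \le \bigl(\max_j \sum_{j\prec k\preceq n} u_k^2\bigr)\bigl(\sum_j |x_j|\bigr)^2 = \tilde h\,\bigl(\sum_j|x_j|\bigr)^2$, which is the weighted analogue of Lemma~\ref{l_rel} — proved the same way, first bounding $s_k^2 \le \bigl(\sum_{j\prec k}|x_j|\bigr)^2 \le \bigl(\sum_j|x_j|\bigr)\bigl(\sum_{j\prec k}|x_j|\bigr)$ and then swapping the order of summation so that each $|x_j|$ accrues the weight $\sum_{j\prec k\preceq n}u_k^2 \le \tilde h$. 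I expect the main obstacle to be purely notational rather than conceptual: making sure the index $j_1$ in the phrase ``for each $j_2$ there are at most $h$ choices of $j_1$'' is replaced everywhere by the correct weighted statement ``$\sum_{j_2\prec j_1\preceq n} u_{j_1}^2 \le \tilde h$'', and that the exponent $\lceil\tilde h\rceil$ appearing in the binomial-to-exponential step does not cause trouble (it does not, since $(1+t)^{\lceil\tilde h\rceil}-1 \le e^{\lceil\tilde h\rceil t}-1$ and we can absorb the ceiling into the $\exp$, or simply note $\tilde h \ge$ the largest number of levels along any root-to-leaf path times $\min_k u_k^2$ is not needed — only the upper bound matters). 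No genuinely new idea beyond Theorem~\ref{thm:model2Analysis} is required.
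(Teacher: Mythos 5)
Your overall plan is the paper's plan: start from Theorem~\ref{t_51} (equivalently the repeated two-norm triangle inequality applied to the recurrence for $F_{j,n,\eta}$), use the Cauchy--Schwarz device \eqref{eqn:cauchySchwarz} with weights $2^j$, and finish with the weighted analogue of Lemma~\ref{l_rel} (which you state and prove correctly, and which the paper leaves implicit). But the key quantitative step is where your write-up breaks down. After $r$ substitutions the coefficient of $u_k^2s_k^2$ is $\lambda_{n,\eta}^{2r}$ times the sum of $u_{\ell_1}^2\cdots u_{\ell_r}^2$ over \emph{chains} $k\prec \ell_r\prec\cdots\prec\ell_1\preceq n$ of distinct ancestors of $k$, i.e.\ the $r$-th elementary symmetric polynomial of the ancestor weights. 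Bounding this by $\tilde h^{\,r}$, as you do when you claim a level-$r$ coefficient of $\lambda_{n,\eta}^r\tilde h^{\,r/2}$, discards the factor $1/r!$ that this chain structure provides; summing $\sum_{r\ge1}(\lambda_{n,\eta}\sqrt{\tilde h})^r$ is then a geometric series that converges only if $\lambda_{n,\eta}\sqrt{\tilde h}<1$ and, even then, is not dominated by $\phi_{n,\tilde h,\eta}$, so it cannot deliver the unconditional exponential-type bound of the theorem. Your attempted repair, $\sum_r\lambda_{n,\eta}^r\tilde h^{\,r/2}u^r\binom{\cdot}{r}^{1/2}\le\sqrt{(1+2\lambda_{n,\eta}^2\tilde h u^2)^{\lceil\tilde h\rceil}-1}$, is not a meaningful inequality: $\tilde h$ is a (typically tiny) weighted sum of squared roundoffs, not a count of levels, so there is no binomial coefficient with ``upper index $\lceil\tilde h\rceil$'' arising from the computation, and the stray factors $u^r$ double-count roundoffs that are already inside $\tilde h$.

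The missing idea — which is exactly the paper's key step — is to swap the order of summation \emph{before} collapsing to $\tilde h$, so that for each node $k$ the sum over chain lengths is evaluated exactly as a product over that node's ancestor chain:
\begin{equation*}
\sum_{j\ge 1}2^j\lambda_{n,\eta}^{2j}\,T_{k,j}^2
=\prod_{k\prec\ell\preceq n}\bigl(1+2\lambda_{n,\eta}^2u_\ell^2\bigr)-1
\le \exp\bigl(2\lambda_{n,\eta}^2\tilde h_k\bigr)-1
\le 2\lambda_{n,\eta}^2\tilde h_k\exp\bigl(2\lambda_{n,\eta}^2\tilde h_k\bigr),
\end{equation*}
where $T_{k,j}^2$ is the elementary symmetric polynomial above and $\tilde h_k\equiv\sum_{k\prec\ell\preceq n}u_\ell^2\le\tilde h$; equivalently one may keep the bound $T_{k,j}^2\le\tilde h_k^{\,j}/j!$. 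Either form replaces the binomial identity of the mono-precision proof and is what actually produces the factor $\lambda_{n,\eta}\sqrt{2\tilde h}\exp(\lambda_{n,\eta}^2\tilde h)$ in $\phi$. (A minor caveat you inherit from the statement itself: the definition \eqref{def_phi2} carries an extra $u$, whereas the argument just sketched — and the paper's own proof — yields the bound with that $u$ set to $1$; your manipulations do not resolve this either.) So the route is the right one, but as written the central estimate fails and must be replaced by the product/factorial bookkeeping above.
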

 
\begin{proof}
Repeated application of the 2-norm triangle inequality implies that the bound
\begin{equation}\label{eqn:mixPrecError}
    |e_n| \leq \sqrt{2\ln(2/\delta)}\sum_{j=0}^h\lambda_{n,\eta}^j\left(\sum_{k=2}^nT_{k,j}^2u_k^2s_k^2\right)^{1/2},
\end{equation}
with
\begin{equation}
    T_{k,0}\equiv 1,\qquad T_{k,j} \equiv \left( \sum_{k\prec \ell_1\prec\cdots \prec \ell_j\preceq n}(u_{\ell_1}\cdots u_{\ell_n})^2 \right)^{1/2}, \qquad 2\leq k \leq n, 
\end{equation}
holds with  probability at least $1-(\delta+\eta)$.
Now apply the Cauchy-Schwarz inequality~\eqref{eqn:cauchySchwarz} as before and swap the order of summation, 
\begin{align}
\begin{split}
            \sum_{j=1}^h\lambda_{n,\eta}^j\left(\sum_{k=2}^nT_{k,j}^2u_k^2s_k^2\right)^{1/2} &\leq \left(\sum_{j=1}^h2^j\lambda_{n,\eta}^{2j}\sum_{k=2}^nT_{k,j}^2u_k^2s_k^2\right)^{1/2}\\
    &= \left(\sum_{k=2}^n \left(\sum_{j=1}^h 2^j\lambda_{n,\eta}^{2j}T_{k,j}^2\right)u_k^2s_k^2\right)^{1/2}.\label{eqn:mixPrecSum}
\end{split}
\end{align}
With $\tilde{h}_k \equiv \sum_{k\prec \ell \preceq n}u_\ell^2$ 
being the weighted depth of node $k$, the inner sums are bounded by
\begin{align*}
    \sum_{j=1}^h 2^j\lambda_{n,\eta}^{2j}T_{k,j}^2
    &= \prod_{k\prec \ell \preceq n}(1+2\lambda_{n,\eta}^2u_{\ell}^2) - 1, \qquad 2\leq k\leq n\\
        &\leq \exp\left(2\lambda_{n,\eta}^2\tilde{h}_k\right)-1
        \leq 2\lambda_{n,\eta}^2\tilde{h}_k\exp\left(2\lambda_{n,\eta}^2\tilde{h}_k\right),
\end{align*}
Insert the bounds $\tilde{h}_k\leq \tilde{h}$ into \eqref{eqn:mixPrecSum}, 
\begin{equation*}
    \sum_{j=1}^h\lambda_{n,\eta}^j\left(\sum_{k=2}^nT_{k,j}^2u_k^2s_k^2\right)^{1/2}
    \leq
    \lambda_{n,\eta}\sqrt{2\tilde{h}}\exp\left(\lambda_{n,\eta}^2\tilde{h}\right)\sqrt{\sum_{k=2}^nu_k^2s_k^2},
\end{equation*}
and combine this inequality with \eqref{eqn:mixPrecError}.
\end{proof}

As a corollary, we obtain the first rigorous probabilistic error bound for the mixed-precision version of \texttt{FABsum} \cite{blanchard2020class} in Algorithm~\ref{alg:fabsum}. 

\begin{algorithm}\caption{Mixed-precision \texttt{FABsum}}\label{alg:fabsum}
\begin{algorithmic}[1]
\REQUIRE Floating point numbers $x_1,\ldots,x_n$; block size $b$; precisions $u_{\text{lo}}$, $u_{\text{hi}}$
\ENSURE $s_n = \sum_{k=1}^n{x_k}$
\FOR{$k=1:n/b$}
\STATE $s_k$ = output of Algorithm~\ref{alg:sum} applied to $x_{(k-1)b+1},\ldots,x_{kb}$ in precision $u_{\text{lo}}$
\ENDFOR
\STATE $s_n$ = output of Algorithm~\ref{alg:sum} applied
to $s_1,\ldots,s_{n/b}$ in precision $u_{\text{hi}}$
\end{algorithmic}
\end{algorithm}

\begin{mycorollary}\label{c_fabsum}
Let $0<\eta < 1$; $0 < \delta < 1-\eta$;
 $h_{\text{lo}}$ the maximum height of all trees in the low-precision calls to Algorithm~\ref{alg:sum}; $h_{\text{hi}}$ the height of the portion of the tree in the high-precision call to Algorithm~\ref{alg:sum}.
Then under the mixed-precision extension of model (\ref{model:second}), with probability at least $1-(\delta+\eta)$, the error in Algorithm~\ref{alg:fabsum} is bounded by
       \[
        |e_n| \leq \sqrt{\tilde{h}}\sqrt{2\ln(2/\delta)}\left(1+\phi_{n,\tilde{h},\eta}\right)\sum_{k=1}^n|x_k|, 
       \]
       where $\tilde{h} \equiv h_{\text{lo}}u_{\text{lo}}^2 + h_{\text{hi}}u_{\text{hi}}^2$, and
       $\phi_{n,\tilde{h},\eta}$ is defined in \eqref{def_phi2}.
\end{mycorollary}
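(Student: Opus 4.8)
The plan is to derive Corollary~\ref{c_fabsum} as a direct specialization of Theorem~\ref{t_52}, the closed-form mixed-precision bound. The first step is to identify the computational tree of Algorithm~\ref{alg:fabsum} and assign to each of its edges (equivalently, each non-leaf node) the appropriate precision. In the low-precision stage, each block of $b$ inputs is summed by a call to Algorithm~\ref{alg:sum} whose internal tree has height at most $h_{\text{lo}}$, and every summation node inside that subtree carries precision $u_{\text{lo}}$. The partial block-sums $s_1,\ldots,s_{n/b}$ then become the leaves of a second call to Algorithm~\ref{alg:sum}, whose tree has height $h_{\text{hi}}$, and every summation node there carries precision $u_{\text{hi}}$. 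Splicing the block-subtrees in as the leaf-subtrees of the high-precision tree gives one combined computational tree to which Theorem~\ref{t_52} applies verbatim.

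The key estimate is to bound the weighted height $\tilde{h} \equiv \max_k \sum_{k \prec \ell \preceq n} u_\ell^2$ of this combined tree by $h_{\text{lo}}u_{\text{lo}}^2 + h_{\text{hi}}u_{\text{hi}}^2$. For any node $k$, the ancestors $\ell$ with $k \prec \ell \preceq n$ split into two groups: those lying inside the same low-precision block-subtree as $k$ (each contributing $u_{\text{lo}}^2$), and those lying in the high-precision portion (each contributing $u_{\text{hi}}^2$). The number of ancestors in the first group is at most the height of that block-subtree, hence at most $h_{\text{lo}}$; the number in the second group is at most $h_{\text{hi}}$. If $k$ is itself a node in the high-precision portion, the first group is empty, which only helps. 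Summing, $\sum_{k \prec \ell \preceq n} u_\ell^2 \leq h_{\text{lo}}u_{\text{lo}}^2 + h_{\text{hi}}u_{\text{hi}}^2$ for every $k$, so $\tilde{h} \leq h_{\text{lo}}u_{\text{lo}}^2 + h_{\text{hi}}u_{\text{hi}}^2$. Since both Theorem~\ref{t_52} and the definition of $\phi_{n,\tilde{h},\eta}$ in \eqref{def_phi2} are monotone increasing in $\tilde{h}$, we may replace $\tilde{h}$ by this upper bound throughout, which is exactly the quantity named in the corollary.

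With these two ingredients the proof is essentially a substitution: apply the second (input-based) inequality of Theorem~\ref{t_52} to the combined tree, then monotonically enlarge $\tilde{h}$ to $h_{\text{lo}}u_{\text{lo}}^2 + h_{\text{hi}}u_{\text{hi}}^2$ in both the prefactor $\sqrt{\tilde{h}}$ and in $\phi_{n,\tilde{h},\eta}$, obtaining the stated bound
\[
|e_n| \leq \sqrt{\tilde{h}}\sqrt{2\ln(2/\delta)}\left(1+\phi_{n,\tilde{h},\eta}\right)\sum_{k=1}^n|x_k|
\]
with $\tilde{h} = h_{\text{lo}}u_{\text{lo}}^2 + h_{\text{hi}}u_{\text{hi}}^2$. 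One should also note that the mixed-precision model \eqref{model:second} extended to per-roundoff precisions $|\delta_k| \leq u_k$ is exactly the hypothesis of Theorem~\ref{t_52}, so no additional probabilistic argument is needed; the failure probability $\delta + \eta$ carries over unchanged.

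I expect the main obstacle to be purely expository rather than mathematical: one must be careful to argue that the two-stage Algorithm~\ref{alg:fabsum} really does produce a single computational tree in the sense of Definition~\ref{d:tree} (with a consistent topological ordering across the splice point), and that the roundoffs generated across the two stages can be linearly ordered so that mean-independence \eqref{model:second} still holds for the combined sequence. Once that bookkeeping is in place, the weighted-height bound is a one-line counting argument and the rest is monotonicity.
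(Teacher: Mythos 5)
Your proposal is correct and matches the paper's (implicit) argument: the corollary is stated as a direct specialization of Theorem~\ref{t_52} to the spliced two-stage \texttt{FABsum} tree, with the weighted height bounded by $h_{\text{lo}}u_{\text{lo}}^2 + h_{\text{hi}}u_{\text{hi}}^2$ since any node has at most $h_{\text{lo}}$ ancestors in its low-precision block subtree and at most $h_{\text{hi}}$ in the high-precision portion. The monotonicity-in-$\tilde{h}$ remark and the bookkeeping about the combined ordering of roundoffs are fine and consistent with what the paper leaves unstated.
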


\begin{myremark}\label{r_51}
Inspired by the error expression in Theorem~\ref{thm:mixPrecError}, we offer the following modified version of advice in 
Remark~\ref{r_11}
\begin{quote}
\textit{In designing a mixed-precision summation method to achieve high accuracy, the aim should be to minimize the absolute values of the intermediate quantities $u_ks_k$.}
\end{quote}
The \texttt{FABsum} Algorithm~\ref{alg:fabsum} attempts to do just this by reserving its high-precision computations for the end, when the intermediate sums $s_k$ are likely to have larger magnitudes. 
\end{myremark}
\section{Numerical experiments}\label{s_numex}
After describing the setup, 
we present numerical experiments for recursive and pairwise summation (Section~\ref{s_ne1}), shifted summation (Section~\ref{s_ne2}), compensated summation (Section~\ref{s_ne3}),
and mixed-precision \texttt{FABSum} (Section~\ref{s_ne4}).

Experiments are performed in MATLAB R2022a, with unit roundoffs \cite{fp16}
\begin{itemize}
\item Half precision $u=2^{-11}\approx 4.88 \cdot 10^{-4}$. 
\item Single precision $u_{\text{hi}}=2^{-24}\approx 5.96\cdot 10^{-8}$ as the high precision in \texttt{FABsum} Algorithm~\ref{alg:fabsum}.
\item Double precision $u=2^{-53}\approx 1.11\cdot 10^{-16}$ for `exact' computation.
\end{itemize}
 Experiments plot errors from two rounding modes: round-to-nearest and stochastic rounding as implemented with  \texttt{chop} \cite{higham2019simulating}.

The summands $x_k$ are independent uniform $[0,1]$ random variables. The plots show relative errors $|\hat{s}_n-s_n|/|s_n|$ versus $n$,
for $100\leq n\leq 10^5$.
We choose relative errors rather than absolute errors
to allow for meaningful calibration:
Relative errors $\leq u$ indicate full accuracy; while
relative errors $\geq .5$ indicate zero digits of accuracy.

For shifted summation we use the empirical mean of two extreme summands,
\begin{equation*}
c=(\min_k{x_k}+\max_k{x_k})/2.
\end{equation*}
For probabilistic bounds, the combined failure probability is $\delta+\eta=10^{-2}+10^{-3}$, hence
$\sqrt{2\ln(2/\delta)}\approx 3.26$. For $n=10^5$ and $h=n$ we get $\lambda_{n,\eta}\approx 6.2$, and in half precision $u=2^{-11}$ the higher-order errors, $1+\phi_{n,h,\eta} \approx 4.4$, have a non-negligible effect on our bounds.  

\subsection{Sequential and pairwise summation}\label{s_ne1}
Figure \ref{f_ne1} shows the errors 
in half precision from Algorithm \ref{alg:sum} for
sequential summation in one panel, and for pairwise summation in another
panel, along with the deterministic bounds from Theorem \ref{t_gdet},
\begin{align}
|e_n| &\leq \sum_{k=2}^n|s_k||\delta_k|\prod_{k\prec j \preceq n}|1+\delta_j| 
\leq \lambda_h\,u\,\sum_{k=2}^n|s_k|\label{b_3}\\
&\leq \lambda_h\,h\,u\,\sum_{j=1}^n{|x_j|}.\label{b_4}
\end{align}
and the probabilistic bounds from Corollary~\ref{c_210},
\begin{align}
    |e_n| &\leq u\sqrt{2\ln(2/\delta)}\left(1+\phi_{n,h,\eta}\right)\sqrt{\sum_{k=2}^ns_k^2}\label{b_1}\\
    &\leq u \sqrt{h}\sqrt{2\ln(2/\delta)}\left(1+\phi_{n,h,\eta}\right)\sum_{k=1}^n|x_k|\label{b_2},
\end{align}

\paragraph{Sequential summation} 
The bound (\ref{b_1}) remains within a factor of 2
of (\ref{b_2}). Although the higher-order error terms $1+\phi_{n,h,\eta}$ represent only a small part of the error bounds, they may 
still be pessimistic, as the
bounds curve upwards for large $n$, while the actual errors 
increase more slowly. The reason may be the 
distribution of floating point numbers: spacing between consecutive numbers is constant within each interval $[2^t, 2^{t+1}]$, so a roundoff $\delta_k$ is affected by previous errors primarily if $\lfloor \log_2(\hat{s}_k)\rfloor \neq \lfloor \log_2(s_k)\rfloor$. Some analyses have derived deterministic error bounds for summation that do not contain second-order terms \cite{jeannerod2013improved,jeannerod2018relative,lange2019sharp,rump2012error}, and perhaps a more careful analysis will be able to do the same for probabilistic bounds. Our bounds otherwise accurately describe the behavior of stochastic rounding, but round-to-nearest suffers from stagnation for larger problem sizes. 

\paragraph{Pairwise summation} The bound (\ref{b_2}) grows proportional to $\sqrt{\log_2(n)}$,
while (\ref{b_1}) remains essentially constant. The behavior 
of (\ref{b_1}) may be due to the monotonically increasing partial sums for
uniform $[0, 1]$ inputs, where the final sum is likely to dominate all 
previous partial sums,
$(\sum_{k=2}^ns_k^2)^{1/2} = \mathcal{O}(s_n)$. This suggests
that pairwise summation of uniform $[0, 1]$ inputs is highly accurate.
The constant bound accurately describes the behavior of the error under stochastic rounding, but not round-to-nearest. We are not sure of the exact reason for the difference in behavior between the two. 

\begin{figure}
\begin{center}
\includegraphics[width = 0.49\textwidth]{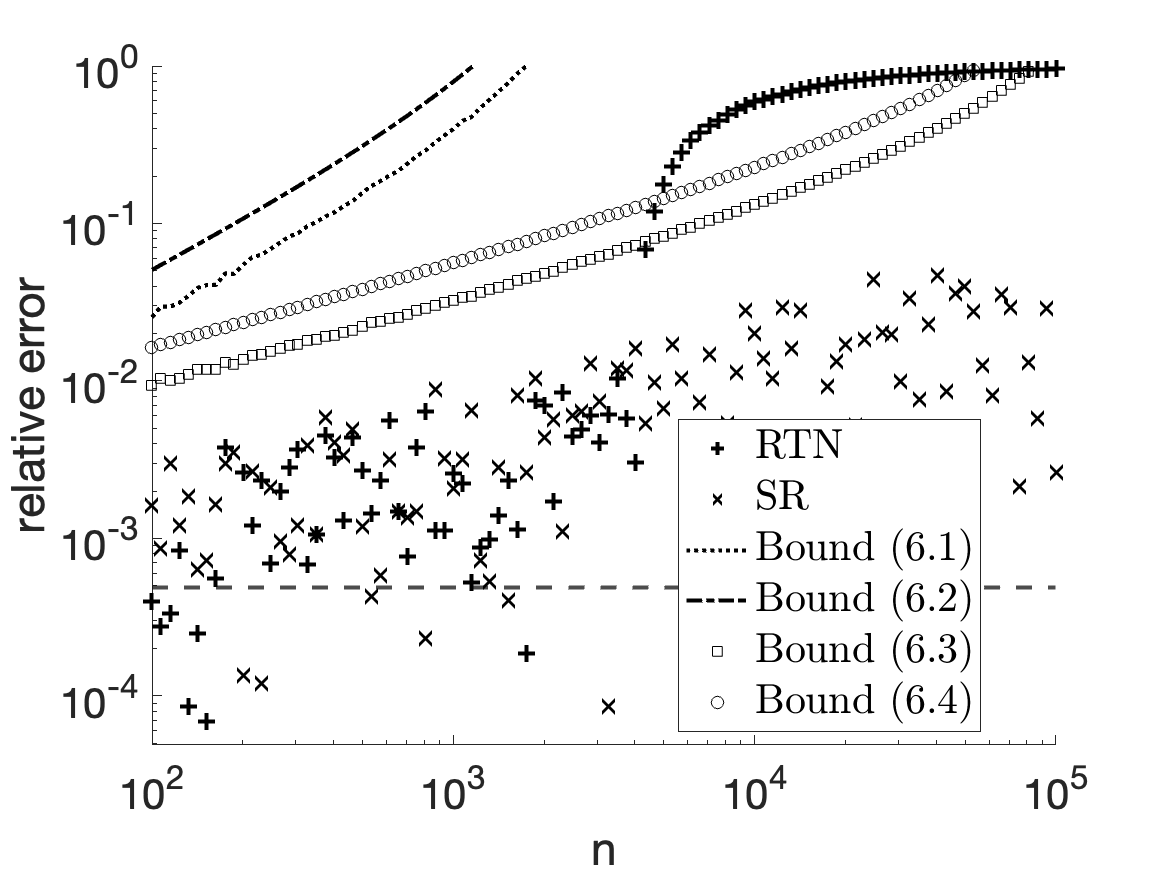}
\includegraphics[width = 0.49\textwidth]{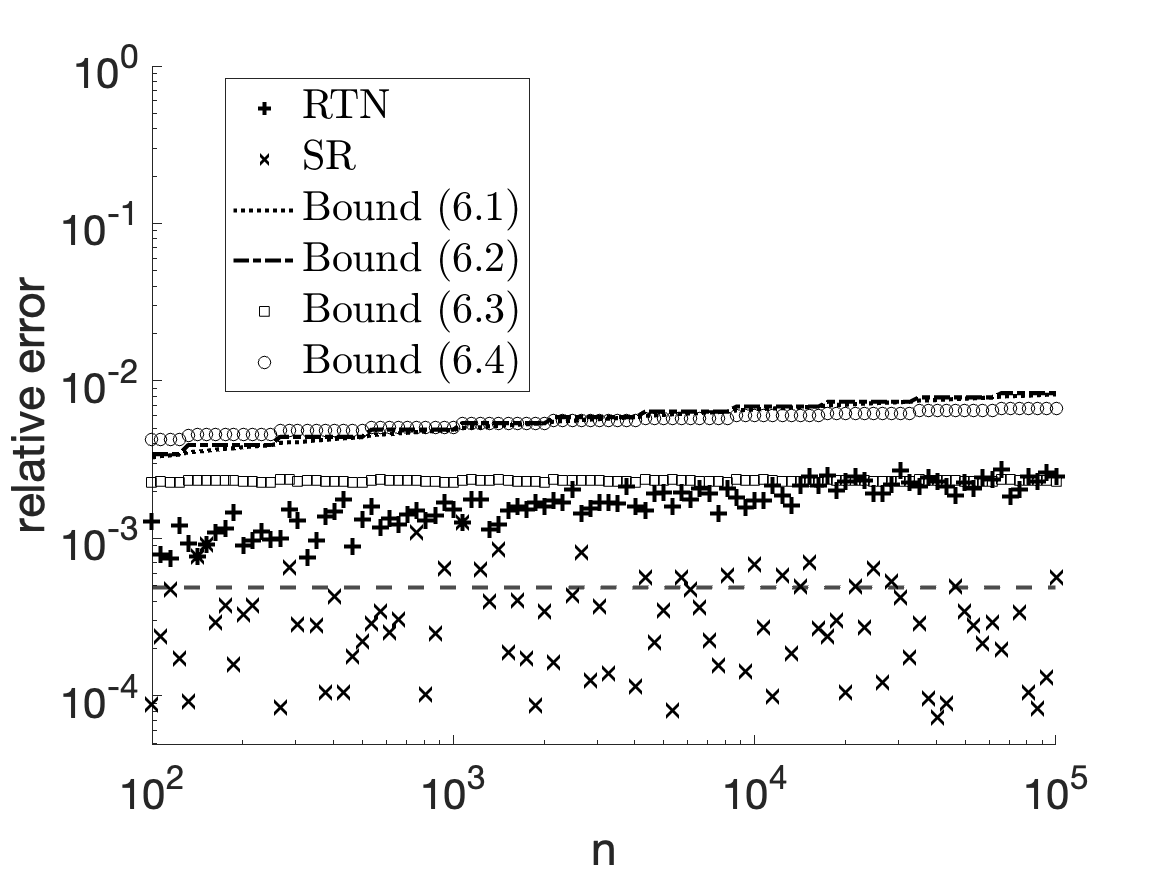}
\end{center}
\caption{Relative errors in half precision for recursive summation (left) and sequential summation (right) versus number of summands $n$.
The symbol (\texttt{+}) indicates round-to-nearest (RTN), and ($\mathtt{\times}$) indicates stochastic rounding (SR).
Horizontal line indicates unit roundoff $u = 2^{-11}$,
and remaining points indicate deterministic bounds (\ref{b_3}) and (\ref{b_4}) and probabilistic bounds (\ref{b_1}) and (\ref{b_2}).}
\label{f_ne1}
\end{figure}

\subsection{Shifted summation}\label{s_ne2}
Figure \ref{f_ne2} shows the errors 
in half precision from Algorithm \ref{alg:shiftSum} 
for shifted sequential summation and shifted pairwise summation,
along with the probabilistic bounds from Theorem~\ref{thm:shifted},
\begin{align}
    |e_n| &\leq u\sqrt{2\ln(2/\delta)}\left(1+\phi_{n,h,\eta}\right)\sqrt{s_n^2 + \sum_{k=2}^nt_k^2 + \sum_{k=1}^{n+1}y_k^2}\label{b_5}\\
    &\leq u\sqrt{2\ln(2/\delta)}\left(1+\phi_{n,h,\eta}\right)\left(n|c| +\sqrt{h}\sum_{k=1}^n{(|x_k-c|+|x_k|)}\right).\label{b_6}
\end{align}
 
A comparison with Figure \ref{f_ne1}
shows that shifting reduces both the actual errors and the bounds.
Errors are on the order of unit roundoff, in all cases: 
round-to-nearest and stochastic rounding, and sequential and pairwise summation.

\begin{figure}
\begin{center}
\includegraphics[width = 0.49\textwidth]{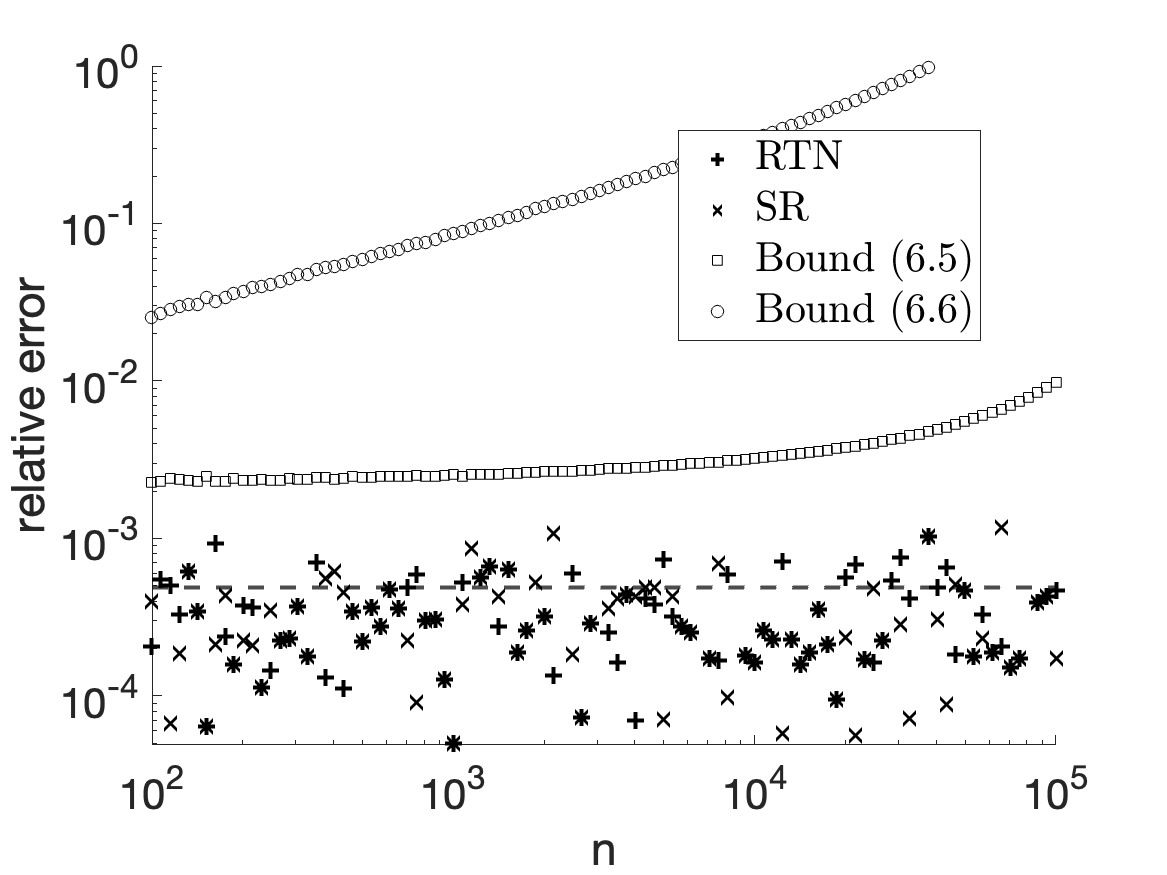}
\includegraphics[width = 0.49\textwidth]{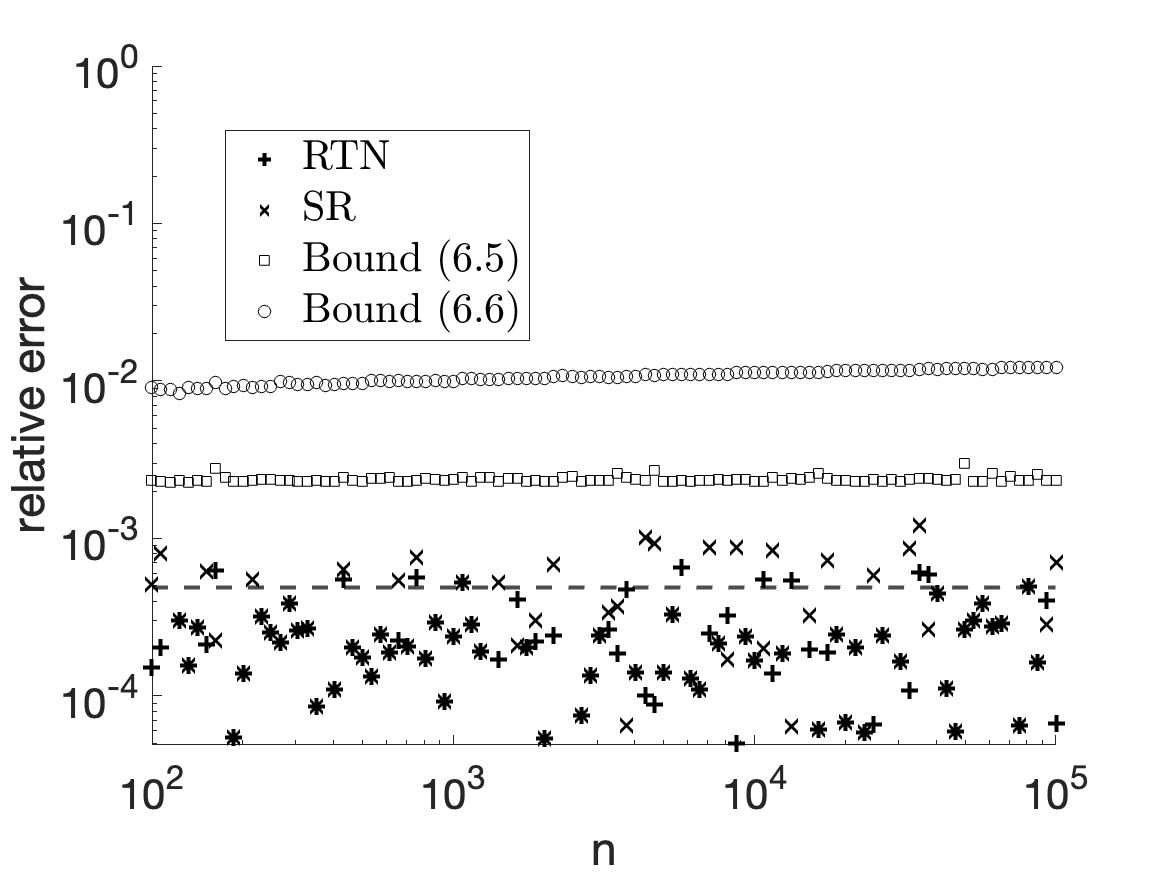}
\end{center}
\caption{Relative errors in half precision for shifted sequential summation (left) and shifted pairwise summation (right) versus number of summands $n$. 
The symbol (\texttt{+}) indicates round-to-nearest (RTN), and ($\mathtt{\times}$) indicates stochastic rounding (SR).
Horizontal line indicates unit roundoff $u = 2^{-11}$,
and remaining points indicate  probabilistic bounds (\ref{b_5}) and (\ref{b_6}).}
\label{f_ne2}
\end{figure}

\subsection{Compensated summation}\label{s_ne3}
The first panel in Figure~\ref{f_ne3} shows the errors in half precision for Algorithm~\ref{alg:compensated}
for $10^2\leq n \leq 10^7$ summands\footnote{Our simulation of half-precision ignores the range restriction \texttt{realmax} = 65504.},
along with deterministic bounds derived from Corollary~\ref{c_cs5}, 
\begin{align}
    |e_n| &\leq u|s_n| + 2u(1+3u)\sum_{k=2}^n|x_k| + 4u^2\sum_{k=2}^{n-1}|s_k| + \mathcal{O}(u^3) \label{b_9}\\
    &\leq (3u + (4n-2)u^2)\sum_{k=1}^n|x_k| + \mathcal{O}(u^3), \label{b_10}
\end{align}
and the probabilistic bounds from Theorem~\ref{thm:comp_prob_err},
\begin{align}
         |e_n|&\leq u\sqrt{2\ln(2/\delta)}
\left(|s_n| + \gamma(\sqrt{2}+\alpha u)\sqrt{\sum_{k=2}^nx_k^2}+\gamma\alpha u\sqrt{\sum_{k=2}^ns_k^2}\right) \label{b_7}\\
&\leq u\sqrt{2\ln(2/\delta)}
\left(1+\sqrt{2}+\sqrt{6}(\sqrt{n}+1) u\right)\sum_{k=1}^n|x_k| + \mathcal{O}(u^3). \label{b_8}
\end{align}

The probabilistic bounds (\ref{b_7}) and
(\ref{b_8}) track the error behavior accurately, with (\ref{b_7}) even capturing
the correct order of magnitude. This also
illustrates the higher accuracy of bounds
involving partial sums.

\begin{figure}
\begin{center}
\includegraphics[width = 0.49\textwidth]{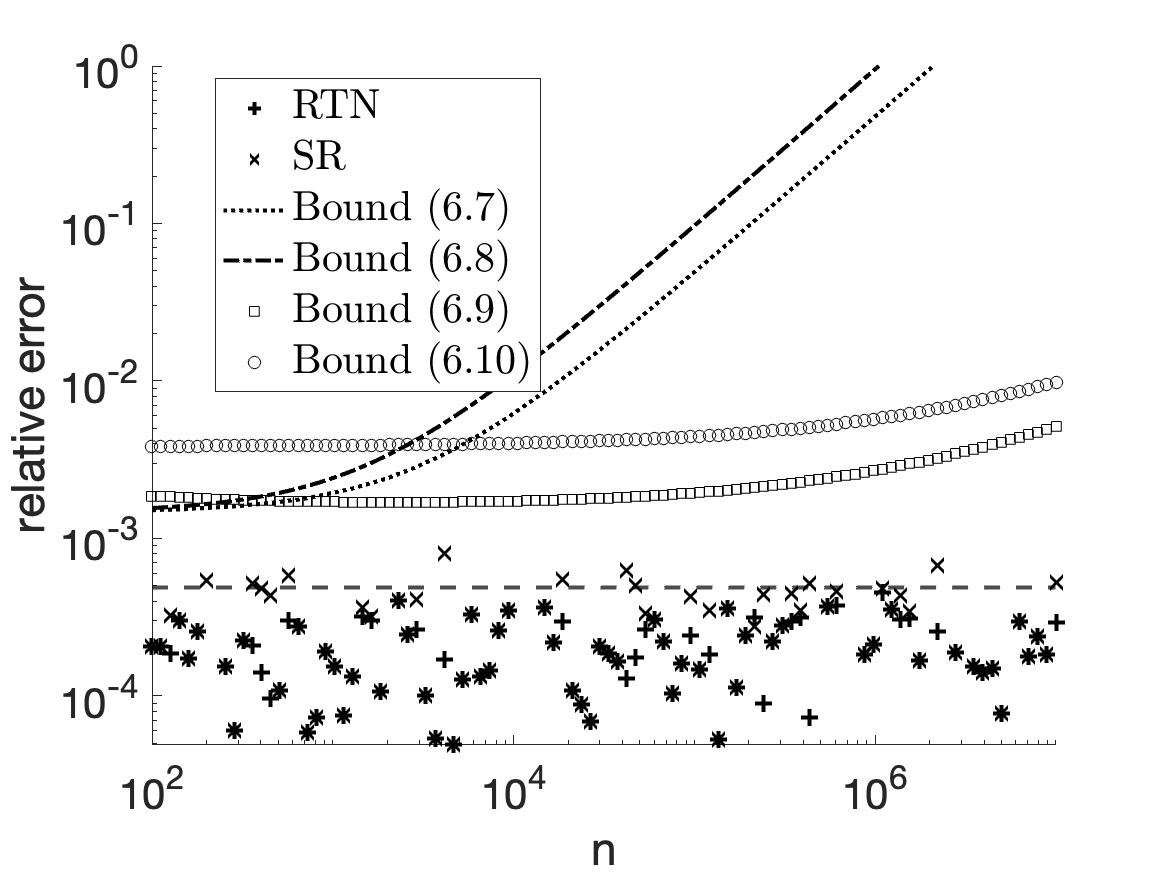}
\includegraphics[width = 0.49\textwidth]{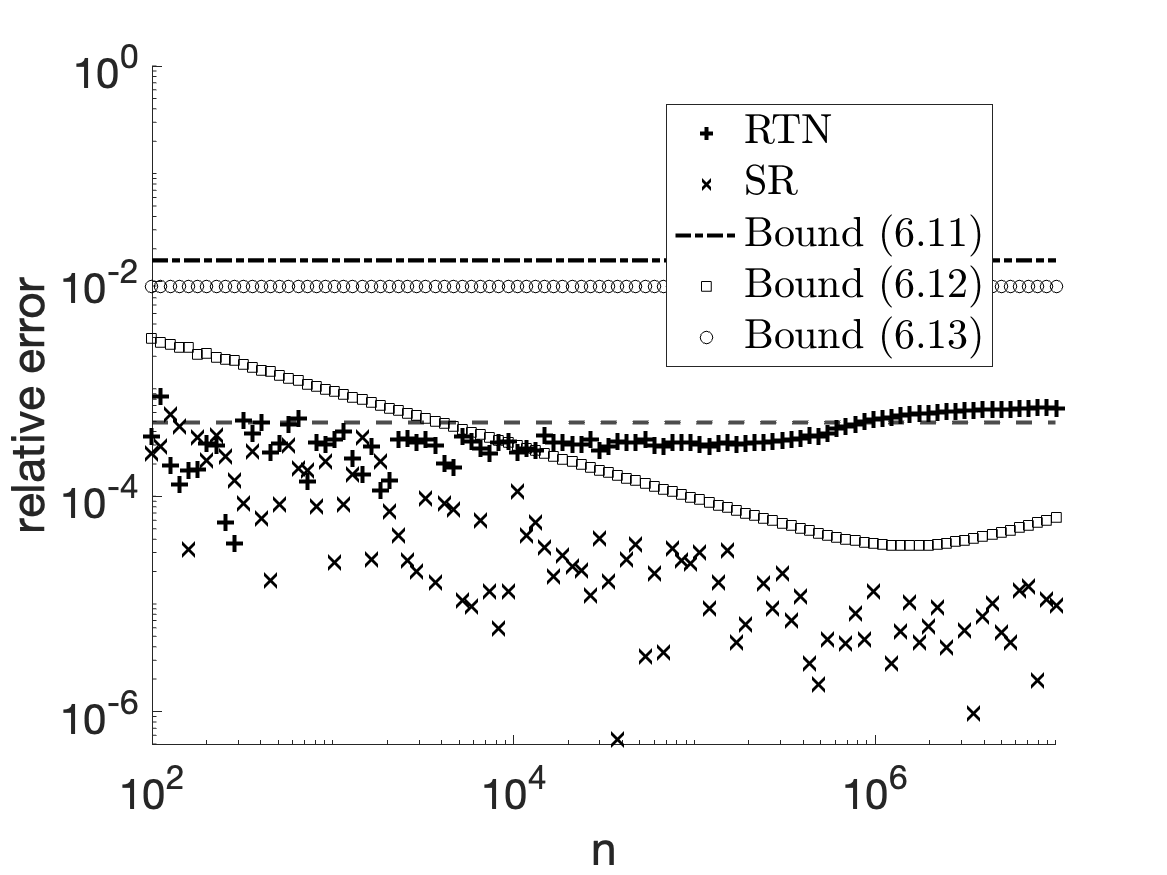}
\end{center}
\caption{Relative errors in half precision for compensated summation (left) and mixed
precision with 
\texttt{FABsum} with high precision $u_{\text{hi}}= 2^{-24}$ (right) versus number of summands $n$. The symbol (\texttt{+}) indicates round-to-nearest (RTN), and ($\mathtt{\times}$) indicates stochastic rounding (SR).
Horizontal line indicates unit roundoff $u_{\text{lo}} = 2^{-11}$,
and remaining points indicate bounds \eqref{b_9}-\eqref{b_8} (left) and \eqref{b_11}-\eqref{b_13} (right). 
}
\label{f_ne3}
\end{figure}

\subsection{Mixed-precision \texttt{FABsum} summation}\label{s_ne4} The second panel of Figure~\ref{f_ne3} shows the errors for Algorithm~\ref{alg:fabsum} 
with $u_{\text{lo}}=2^{-11}\approx 4.44\cdot 10^{-4}$, $u_{\texttt{hi}}=2^{-24}\approx 5.96\cdot 10^{-8}$, block 
size $b=32$ and $10^2\leq n \leq 10^7$ summands, where each internal call to Algorithm~\ref{alg:sum} uses recursive summation. We also plot the deterministic first-order bound from
\cite[Eqn.~3.5]{blanchard2020class}, 

\begin{equation}
    |e_n| \leq bu\sum_{k=1}^n|x_k| + \mathcal{O}(u^2), \label{b_11}
\end{equation}
and the probabilistic bounds derived from Theorem~\ref{t_52}, 
 \begin{align}
     |e_n| &\leq \sqrt{2\ln(2/\delta)}\left(1+\phi_{n,\tilde{h},\eta}\right)\sqrt{\sum_{k=2}^nu_k^2s_k^2} \label{b_12}\\
    &\leq \sqrt{\tilde{h}}\sqrt{2\ln(2/\delta)}\left(1+\phi_{n,\tilde{h},\eta}\right)\sum_{k=1}^n|x_k|, \label{b_13}
 \end{align}
 where $\tilde{h} = bu^2 + (n/b)u_{\text{hi}}^2$. Errors are on the order of unit roundoff for round-to-nearest. We were surprised to observe that for stochastic rounding, errors fell to more than an order of magnitude {\em below} unit roundoff for large problem sizes. This behavior is correctly predicted by the bound in terms of the partial sums \eqref{b_12} but not the bound in terms of the inputs \eqref{b_13}, demonstrating the importance of error expressions involving the 
 partial sums.

\subsection*{Acknowledgement}
We are greatly indebted to Claude-Pierre Jeannerod for his many, many 
suggestions that improved the paper.
We also thank Johnathan Rhyne for helpful discussions.
\appendix


\section{Proof of Lemma \ref{lemma:alphaBound}}\label{apx:alpha}
Define $\beta \equiv u(1+u)^2$ and \begin{equation}
    \omega_k \equiv |s_k| + |x_k| + S_k, \qquad 2\leq k \leq n-1.
\end{equation}
Lemma \ref{lemma:sddot} implies
\begin{align}
    Z_k &= u\omega_k +(1+u)Y_k = u\omega_k + (1+u)^2C_{k-1},\qquad 3\leq k\leq n-1 \label{eqn:apx_zrec}\\
    C_k &= u\omega_k + uZ_k
        = u(1+u)\omega_k + \beta C_{k-1},\label{eqn:apx_crec}
\end{align}
where $Z_2\leq u\omega_2$ and $C_2\leq u(1+u)\omega_2$. For $3\leq k \leq n$, 
define the vectors
\[
{\bf c}_k \equiv \begin{bmatrix}C_{k-1}&\cdots& C_2\end{bmatrix}^T, \quad {\bf z}_k \equiv \begin{bmatrix}Z_{k-1}&\ldots&Z_2\end{bmatrix}^T, \quad {\bf w}_k \equiv \begin{bmatrix}\omega_{k-1} &\ldots &
\omega_2\end{bmatrix}^T.
\]
From \eqref{eqn:apx_crec} follows the
componentwise inequality
\[
    {\bf c}_k \leq u(1+u){\bf w}_k + \beta {\bf U}{\bf c}_k, 
\]
where ${\bf U}$ is an upper shift matrix.
Solving for ${\bf c}_k$ gives another componentwise inequality with a unit upper
triangular matrix ${\bf I}-\beta{\bf U}$,
\[
    {\bf c}_k \leq u(1+u)({\bf I}-\beta{\bf U})^{-1}{\bf w}_k,
\]
and a bound
\[
    \|{\bf c}_k\|_2 \leq u(1+u)\|({\bf I}-\beta{\bf U})^{-1}{\bf w}_k\|_2 \leq \tfrac{u(1+u)}{1-\beta}\|{\bf w}_k\|_2. 
\]
The bound for $\|{\bf z}_k\|_2$ follows from \eqref{eqn:apx_zrec}
and the definition of $\beta$,
\[
    \|{\bf z}_k\|_2 \leq u\|{\bf w}_k\|_2 + (1+u)^2\|{\bf c}_k\|_2 \leq \tfrac{u(2+2u+u^2)}{1-\beta}\|{\bf w}_k\|_2.
\]
Finally, from $Y_k = (1+u)C_{k-1}$ follows the Frobenius norm bound
\begin{equation*}
    \left(\sum_{j=3}^k\left( Y_j^2 + C_{j-1}^2 + Z_{j-1}^2\right)\right)^{1/2}
    = \left\|\begin{bmatrix}(1+u){\bf c}_k& {\bf c}_k& {\bf z}_k\end{bmatrix}\right\|_F \leq \alpha u\|{\bf w}_k\|_2, 
\end{equation*}
where the higher order terms in $\alpha$ follow from the
Taylor series expansion $(1-\beta)^{-2}=1 +2u +\mathcal{O}(u^2)$,
\[
    \alpha^2 = \frac{1+3(1+u)^2 + 2(1+u)^4}{(1-\beta)^2} = 6 + 26u + \mathcal{O}(u^2). 
\]

\bibliographystyle{spmpsci}
\bibliography{references}

\end{document}